\def\@abssec#1{\vspace{.05in}\footnotesize \parindent .2in
{\bf #1. }\ignorespaces}
\newtheorem{theorem}{Theorem}[section]
\newtheorem{lemma}[theorem]{Lemma}
\newtheorem{proposition}[theorem]{Proposition}
\newtheorem{corollary}[theorem]{Corollary}
\newtheorem{definition}[theorem]{Definition}
\newtheorem{remark}[theorem]{Remark}
\def \Rm {\mathbb R}
\def \Nm {\mathbb N}
\def \Sm {\mathbb S}
\newcommand{\eps}{\varepsilon}
\newcommand{\dsum}{\displaystyle\sum}
\newcommand{\dint}{\displaystyle\int}
\newcommand{\pdr}[2]{\dfrac{\partial{#1}}{\partial{#2}}}
\newcommand{\dr}[2]{\dfrac{d{#1}}{d{#2}}}
\newcommand{\mA}{\mathcal A}
\newcommand{\mB}{\mathcal B}
\newcommand{\mC}{\mathcal C}
\newcommand{\mD}{\mathcal D}
\newcommand{\mF}{\mathcal F}
\newcommand{\mG}{\mathcal G}
\newcommand{\mH}{\mathcal H}
\newcommand{\mI}{\mathcal I}
\newcommand{\mL}{\mathcal L}
\newcommand{\mM}{\mathcal M}
\newcommand{\mO}{\mathcal O}
\newcommand{\mP}{\mathcal P}
\newcommand{\mQ}{\mathcal Q}
\newcommand{\mR}{\mathcal R}
\newcommand{\mS}{\mathcal S}
\newcommand{\mT}{\mathcal T}
\newcommand{\mU}{\mathcal U}
\newcommand{\mX}{\mathcal X}
\newcommand{\mY}{\mathcal Y}
\newcommand{\mJ}{{\mathfrak J}}
\renewcommand{\mp}{{\mathfrak p}}
\newcommand{\cout}[1]{}
\newcommand{\dX}{\partial X}
\title{Hybrid inverse problems and redundant systems of partial differential equations}
\author{Guillaume Bal}
\address{Department of Applied Physics and 
        Applied Mathematics, Columbia University, 
        New York NY, 10027; gb2030@columbia.edu}
\begin{document}
 
\maketitle

\begin{center}
\em Dedicated to Gunther Uhlmann's 60th birthday.
\end{center}

\begin{abstract}
Hybrid inverse problems are mathematical descriptions of coupled-physics (also called multi-waves) imaging modalities that aim to combine high resolution with high contrast. The solution of a high-resolution inverse problem, a first step that is not considered in this paper, provides internal information combining unknown parameters and solutions of differential equations. In several settings, the internal information and the differential equations may be described as a redundant system of nonlinear partial differential equations.

We propose a framework to analyze the uniqueness and stability properties of such systems. We consider the case when the linearization of the redundant system is elliptic and with boundary conditions satisfying the Lopatinskii conditions. General theories of elliptic systems then allow us to construct a parametrix for such systems and derive optimal stability estimates.

The injectivity of the nonlinear problem or its linearization is not guaranteed by the ellipticity condition. We revisit unique continuation principles, such as the Holmgren theorem and the uniqueness theorem of Calder\'on, in the context of redundant elliptic systems of equations.

The theory is applied to the case of power density measurements, which are internal functionals of the form $\gamma|\nabla u|^2$ where $\gamma$ is an unknown parameter and $u$ is the solution to the elliptic equation $\nabla\cdot\gamma\nabla u=0$ on a bounded domain with appropriate boundary conditions.
\end{abstract}


\section{Introduction}
\label{sec:intro}

A recent class of (mostly medical) imaging modalities, called hybrid, coupled-physics, or multi-wave modalities offers the possibility to reconstruct high-contrast parameters of interest with high resolution. High contrast is important to discriminate between, say, healthy and non-healthy tissues. Resolution is important to detect anomalies at an early stage. 

Such hybrid modalities typically involve two steps. In the first step, not considered in this paper, a high resolution modality takes as an input measurements performed at the boundary of a domain of interest and provides as an output internal functionals of the parameters of interest and of specific solutions of underlying partial differential equations describing the probing (medical imaging) mechanism. This paper is concerned with the second step, involving the quantitative reconstruction of the parameters from knowledge of said internal functionals. For recent books and reviews on hybrid inverse problems, we refer the reader to, e.g., \cite{A-Sp-08,AS-IP-12,B-IO-12,KK-EJAM-08,S-SP-2011,WW-W-07}.

Most practically used hybrid inverse problems involve internal functionals that are polynomials in the parameters of interest and the specific solutions mentioned above. Combined with the equations describing the latter solutions, we observe that all available information represents a coupled, often redundant, system of nonlinear partial differential equations.

In some instances, local algebraic manipulations allow us to solve such a system explicitly. In the framework of functionals of solutions to second-order equations (and not of their derivatives), we refer the reader to, e.g., \cite{BR-IP-11,BRUZ-IP-11,BU-CPAM-13,BU-IP-10,BU-AML-12}. Such theories find applications in the quantitative step of the imaging modalities Photo-acoustic tomography, Thermo-acoustic tomography, Transient Elastography, and Magnetic Resonance Elastography; see also \cite{CAB-IP-07,CLB-SPIE-09,MY-IP-04,MZM-IP-10,PS-IP-07,SU-IO-12}. In the framework of functionals of the gradients of solutions, which find applications in Ultrasound Modulated tomography and in Current Density Imaging, we refer the reader to, e.g., \cite{ABCTF-SIAP-08,B-APDE-13,BBMT-13,BGM-IP-13,BGM-IPI-13,BM-LINUMOT-13,BS-PRL-10,CFGK-SJIS-09,GS-SIAP-09,KK-AET-11,MB-aniso-13,MB-IP-12,MB-IPI-12}.

In many cases, explicit algebraic inversions may not be known or may not be applicable because not enough information is available. This paper proposes a framework to address several such problems when the linearization of the coupled system is {\em elliptic}. Hybrid inverse problems need not be elliptic; see the example of the $0$-Laplacian in \cite{B-APDE-13,CFGK-SJIS-09} (also recalled below in section \ref{sec:zerolap}) or the Photo-acoustic problem as treated in, e.g., \cite{BR-IP-11,BU-IP-10}. However, when the number of internal functionals increases, the resulting hybrid system becomes more redundant and hence more likely to be elliptic. We consider such a setting in section \ref{sec:ellipticity}. We recall that elliptic systems augmented with boundary conditions that satisfy the Lopatinskii conditions admit left-parametrices. This follows from the theory of Agmon-Douglis-Nirenberg \cite{ADN-CPAM-59,ADN-CPAM-64} and the extensions to redundant systems by Solonnikov \cite{S-JSM-73}. The existence of parametrices allows us to solve the linear problem up to possibly a finite dimensional space. Along with the construction of a parametrix, elliptic regularity theory provides optimal stability results for the linearization of the hybrid inverse problem. 

The analysis of elliptic hybrid inverse problems was first addressed in \cite{KS-IP-12} by means of systems of pseudo-differential operators that were shown to be elliptic in the sense of Douglis and Nirenberg. The differential systems considered in this paper simplify the analysis of boundary conditions and hence of injectivity for the linearized and nonlinear hybrid inverse problems as we now describe.


The possible existence of a finite dimensional kernel for the linearized hybrid inverse problem prevents us from determining whether the available internal functionals uniquely determine the coefficients of interest. Moreover, the dimension of the finite dimensional kernel is not stable with respect to small perturbations, which prevents us from analyzing the uniqueness and stability properties of the nonlinear hybrid problem. A powerful methodology to obtain uniqueness results in the framework of elliptic systems of equations is the notion of {\em unique continuation}. In section \ref{sec:ucp}, we revisit two classical notions of unique continuation. One is based on the Holmgren theorem, which we generalize to the setting of redundant systems considered in this paper. The second one is based on the use of Carleman estimates as they are formulated in Calder\'on's uniqueness theorem. See \cite{C-AJM-58,C-PSFD-62,H-SP-63,H-I-SP-83,H-III-SP-94,L-AM-57,N-AMS-73,Z-Birk-83} for references on these unique continuation results. Several extensions of these results, following the presentation in \cite{N-AMS-73}, are given in the setting of redundant systems in section \ref{sec:ucp} with proofs postponed to the appendix.

Once a reasonable uniqueness result has been obtained for the linearization of the nonlinear hybrid inverse problem, several statements about uniqueness and iterative reconstruction procedures can be formulated for the nonlinear hybrid inverse problem. A constructive fixed point iteration method and a non-constructive local uniqueness result for the nonlinear problem are presented in section \ref{sec:nonlinear}.

As an application of the conditions of ellipticity including boundary conditions and the conditions for unique continuation, we consider the case of power density internal functionals $H_j(x)=\gamma(x)|\nabla u_j|^2(x)$, where $u_j$ is the solution to $\nabla\cdot\gamma\nabla u_j=0$ on an open domain $X\subset\Rm^n$ with boundary conditions $u_j=f_j$ on $\partial X$ for $1\leq j\leq J$. For such a problem, we characterize the conditions under which the redundant problem is elliptic (for $J=2$ in dimension $n=2$ and $J=3$ in higher dimension) and analyze cases in which a unique continuation principle (UCP) applies.

\section{Inverse Problems with local internal functionals.}
\label{sec:ellipticity}

\subsection{Systems of nonlinear partial differential equations}

Let $\gamma$ be a set of constitutive parameters in (linear or nonlinear, scalar or systems of) partial differential equations of the form
\begin{equation}
\label{eq:pde} 
 \mL(\gamma,u_j) =0  \quad \mbox{ in } X,\qquad \tilde \mB u_j = f_j\quad\mbox{ on } \partial X,
\end{equation}
where $\mL$ is a polynomial in the derivatives of the solution $u_j$ and those of $\gamma$ on the open domain $X\subset\Rm^n$, with $u_j$ augmented with boundary conditions on $\partial X$ for $1\leq j\leq J$. 

Let us now assume knowledge of the functionals
\begin{equation}
\label{eq:fct}
\mM(\gamma,u_j) = H_j \quad\mbox{ in } X, \qquad 1\leq j\leq J.
\end{equation}
where $\mM$ is a polynomial in the derivatives of the solution $u_j$ and those of $\gamma$.

Several hybrid inverse problems may be recast in this general framework. More generally, we could have knowledge of functionals of the form $\mM(\gamma,u_i,u_j) = H_{ij}$, or functionals $\mM$ depending on more than two solutions $u_j$. We restrict ourselves to \eqref{eq:fct} to simplify notation.

The above problem may thus be recast as a system of nonlinear partial differential equations for $(\gamma,\{u_j\})$:
\begin{equation}
\label{eq:syst}
\begin{array}{rcl}
\mL(\gamma,u_j) &=&0  \quad \mbox{ in } X,\qquad \tilde \mB u_j = f_j\quad\mbox{ on } \partial X, \qquad 1\leq j\leq J \\
\mM(\gamma,u_j) &=& H_j \quad\mbox{ in } X, \qquad 1\leq j\leq J.
\end{array}
\end{equation}

The first relevant question for the inverse problem is whether the above system admits a {\em unique solution}. Since the solutions $u_j$ are uniquely determined by knowledge of $\gamma$, we are primarily interested in finding a unique solution to the parameters $\gamma$. The strategy followed in \cite{KS-IP-12} consists of writing a system of pseudo-differential equations for $\gamma$. Considering the higher-dimensional coupled system of equations for $(\gamma,\{u_j\})$ allows us to simplify the analysis of the uniqueness question for \eqref{eq:syst}.

The second question pertains to the stability properties of the reconstruction. Provided that the solution to the inverse problem is unique, we wish to understand how perturbations in the information $\{H_j\}$ propagates to the reconstruction of $\gamma$.

The uniqueness and stability properties of the system depend on the number of acquired internal functionals $J$ and on the way the medium was probed via the boundary conditions $\{f_j\}$. Understanding how the uniqueness and stability properties are affected by changes in $J$ and the boundary conditions $\{f_j\}$ is the third question we wish to (very partially) answer.

\subsection{Linearization}

Some problems of the form \eqref{eq:syst} can directly be solved as non-linear systems. For instance, when $\mL$ is a linear second-order equation in $u_j$ and $\mM(\gamma,u_j)=u_j$ the solution itself, the full non-linear problem is analyzed in \cite{BU-CPAM-13,BU-AML-12}.

For many problems in which direct reconstruction procedures may not readily be available, it is fruitful to analyze the linearization of \eqref{eq:syst}. Neglecting boundary conditions at first, this yields
\begin{equation}
\label{eq:linsyst}
\begin{array}{rcl}
\partial_\gamma \mL(\gamma,u_j) \delta\gamma + \partial_u \mL(\gamma,u_j) \delta u_j&=&0  \quad \mbox{ in } X, \qquad 1\leq j\leq J\\
\partial_\gamma \mM(\gamma,u_j) \delta\gamma + \partial_u \mM(\gamma,u_j) \delta u_j &=& \delta H_j \quad\mbox{ in } X, \qquad 1\leq j\leq J.
\end{array}
\end{equation}
Note that the above differential operators may all be of different orders. With $v = (\delta\gamma,\{\delta u_j\})$, we may recast the above system as
\begin{equation}
\label{eq:linear}
\mA v = \mS,
\end{equation}
for an implicitly defined source $\mS$. Let us assume that each $u_j$ is a scalar solution and that each $H_j$ is also a scalar information. Then $\mA$ is a system of differential operators of size $2J\times(J+M)$, where $M$ is the number of scalar functions describing $\gamma$. Note that for $J<M$, the above system is under-determined. We consider here the case $J\geq m$. In several practical problems, $J=M$ gives rise to a determined system $\mA$ that is either not invertible, or invertible with non-optimal stability properties. It is therefore also fruitful to consider the setting with $J>M$.

\subsection{Ellipticity}

In applications, $\mL$ is often a linear, elliptic, operator in the variables $\{u_j\}$. Adding the constraints \eqref{eq:fct}, however, may render the coupled system \eqref{eq:syst} or its linear version \eqref{eq:linsyst} non-elliptic. One fruitful strategy to solve \eqref{eq:linear} on the whole domain $X$ (with appropriate boundary conditions) is therefore to ``ellipticize" $\mA$, i.e., to find a number of constraints $J$ sufficiently large so that $\mA$ is elliptic, provided that such a $J$ exists.

What we mean by elliptic is defined as follows. For each $x\in X$, $\mA(x,D)_{ij}$ is a polynomial in $D=(\partial_{x_1},\ldots,\partial_{x_n})$ for $1\leq i\leq 2J$ and $1\leq j\leq J+M$. We define the {\em principal part} $\mA_0$ of $\mA$ in the sense of Douglis and Nirenberg \cite{DN-CPAM-55}. For each row $1\leq i\leq 2J$ of the system, we associate an integer $s_i$ and for each column $1\leq j\leq J+M$ of the system an integer $t_j$. We normalize these integers by assuming that ${\rm max}(s_i)=0$.

We assume that $\mA_{ij}(x,D)$ is a polynomial in $D$ of degree not greater than $s_i+t_j$.  Then $\mA_{0,ij}(x, D)$ is the part of the polynomial in $\mA_{ij}(x,D)$  of degree exactly equal to $s_i+t_j$. 

When all differential operators in \eqref{eq:linsyst} are of the same order $t$, then we may choose $s_i=0$ and $t_j=t$, in which case $\mA_0(x,D)$ is composed of entries that are homogeneous polynomials of degree $t$ in $D$. Many practical problems arise in forms in which the differential operators in \eqref{eq:linsyst} have different orders.

We say that $\mA$ is {\em elliptic} when the matrix $\mA_0(x,\xi)=\{\mA_{0,ij}(x,\xi)\}$, the symbol of the operator $\mA_0$, is {\em full-rank} (i.e., of rank $J+M$ here) for all $\xi\in\Sm^{n-1}$ the unit sphere and all $x\in\bar X$.

Being full-rank is ``more likely" when $J$ is large, i.e., when $\mA$ is over-determined. It is then useful to acquire redundant information $H_j$ until \eqref{eq:linear} above is elliptic because elliptic systems enjoy more favorable (and in fact optimal) stability estimates than non-elliptic systems.

\subsection{Lopatinskii boundary conditions}
\label{sec:Lop}

Let us assume that we have been able to prove that $\mA$ was a redundant elliptic system of equations. Then the system can be solved, up to possibly a finite dimensional subspace, when it is augmented by boundary conditions that satisfy the Lopatinskii criterion. This is defined as follows; see \cite{S-JSM-73}.

We consider the system
\begin{equation}
\label{eq:linbc}
\mA v = \mS\quad\mbox{ in } X,\qquad \mB v = \phi\quad\mbox{ on } \partial X,
\end{equation}
where $\mB(x,D)$ is a $Q\times (J+M)$ matrix with entries $\mB_{ij}(x,D)$ that are polynomial in $D$ for each $x\in\partial X$.  We denote by $b_{ij}$ the order of $\mB_{ij}$ and by $\sigma_i=\max_j (b_{ij}-t_j)$. Then $\mB_0(x,D)$ is the principal part of $\mB$ and consists of entries $\mB_{0,ij}(x,D)$ defined as the polynomials of $\mB_{ij}(x,D)$ of degree exactly equal to $\sigma_i+t_j$. 

The Lopatinskii conditions are defined as follows. For each $x\in\partial X$, we denote by $\nu(x)$ the outward unit normal to $X$ at $x\in\partial X$. We then think of $z$ as the parameterization of the half line $x-z\nu(x)$ for $z\geq0$. Let $\zeta\in\Sm^{n-1}$ with $\zeta\cdot\nu(x)=0$ and consider the system of ordinary differential equations
\begin{equation}
\label{eq:lopat}
\begin{array}{rcl}
 \mA_0 (x,i\zeta + \nu(x) \dr{}z) u(z) &=& 0 \quad \mbox{ in } z>0 \\[2mm]
 \mB_0 (x,i\zeta + \nu(x) \dr{}z) u(z) &=& 0 \quad \mbox{ at } z=0.
\end{array}
\end{equation}
We assume that for each $x\in\partial X$, the only solution to the above system such that $u(z)\to 0$ as $z\to\infty$ is $u\equiv0$. This is the Lopatinskii condition for $(\mA,\mB)$. We then also say that $\mB$ covers $\mA$.

The above conditions need to be verified for the specific problems being considered. In some situations, the boundary conditions provided by \eqref{eq:pde} (or their linearization) generate a cover of $\mA$. In other situations, they need to be augmented with additional boundary conditions for $\delta u_j$ as well as for $\delta\gamma$ as we shall see.

When $\mA$ is elliptic and $\mB$ covers $\mA$, we say that $(\mA,\mB)$ is an elliptic system.

\subsection{Parametrices and stability estimates.}
\label{sec:param}

Following work in \cite{ADN-CPAM-59,ADN-CPAM-64,DN-CPAM-55} on determined systems, the case of overdetermined elliptic systems was treated in \cite{S-JSM-73}. The salient feature of these works is that the operator $A=(\mA,\mB)$ admits a left-parametrix (a left-regularizer) in the following sense. Let $(\mS,\phi)$ in \eqref{eq:linbc} be in the space
\begin{displaymath}
 \mR(p,l) = W_p^{l-s_1}(X)\times\ldots\times  W_p^{l-s_{2J}}(X) \times W_p^{l-\sigma_1-\frac1p}(\partial X) \times \ldots \times W_p^{l-\sigma_Q-\frac1p}(\partial X),
\end{displaymath}
for some $l\geq0$ and $p>1$ and let us assume that $(\mA,\mB)$ is a bounded operator from 
\begin{displaymath}
 v\in \mU(p,l) = W_p^{l+t_1}(X)\times\ldots\times W_p^{l+t_{J+M}}(X)
\end{displaymath}
to $(\mA,\mB)v=(\mS,\phi)\in\mR(p,l)$. Such is the case when the coefficients of $\mA$ and $\mB$ are sufficiently regular. More precisely, with $l$ sufficiently large so that $p(l-s_i)>n$ for all $1\leq i\leq 2J$ (to simplify; see \cite{S-JSM-73} for slight generalizations), we assume that $\mA_{ij}$ is a sum of homogeneous operators of degree $s_i+t_j-\kappa$ for $0\leq \kappa\leq s_i+t_j$ and that the coefficients of these operators are of class $W^{l-s_i}_p(X)$. Moreover, assuming $l$ large enough so that $p(l-\sigma_q)>n$ as well for $1\leq q\leq Q$, we assume that $\mB_{qj}$ is a sum of homogeneous operators of degree $\sigma_q+t_j-\kappa$ for $0\leq \kappa\leq \sigma_q+t_j$ and that the coefficients of these operators are of class $W^{l-\sigma_q-\frac 1p}(\partial X)$.
 Here $W_p^s(X)$ is the standard Sobolev space of functions with $s$ derivatives that are $p$-integrable in $X$ with standard extensions for $s$ not an integer \cite{adams}.

The main result in \cite{S-JSM-73} is the existence of a bounded operator $R$ from $\mR(p,l)$ to $\mU(p,l)$ such that 
\begin{equation}
\label{eq:leftparam}
RA=I-T,
\end{equation}
where $I$ is the indentity operator and $T$ is compact in $\mU(p,l)$. 
When $1$ is not in the spectrum of $T$ so that $I-T$ is invertible, then $A$ is invertible with bounded inverse $(I-T)^{-1}R$.
However, $1$ could very well be in the spectrum of $T$, in which case ${\rm dim Ker}A$ is finite but positive.

Moreover, we have the following stability estimate
\begin{equation}
\label{eq:stabell}
\dsum_{j=1}^{J+M} \|v_j\|_{W_p^{l+t_j}(X)} \leq C \Big( \dsum_{i=1}^{2J} \|\mS_i\|_{W_p^{l-s_i}(X)} + \dsum_{i=1}^Q \| \phi_i\|_{W_p^{l-\sigma_i-\frac 1p}(\partial X)} \Big) + C_2 \dsum_{t_j>0} \|v_j\|_{L^p(X)},
\end{equation}
for some constants $C>0$ and $C_2>0$. 

The presence of $C_2>0$ indicates the possibility that $A$ may not be invertible. The presence of finite dimensional kernels is a serious difficulty in the analysis of the nonlinear problem \eqref{eq:syst} because such a dimension is not stable with respect to perturbations. What we can ensure is that for $A_1$ sufficiently small, then ${\rm dim Ker}(A+A_1)\leq{\rm dim Ker}A$; see, e.g., \cite{H-III-SP-94}.

Whether we can choose $C_2=0$ above, i.e., whether $A$ is invertible, depends on lower-order terms that are not captured by the principal part $(\mA_0,\mB_0)$. Their analysis can prove quite complicated in practical settings and we do not follow that route here. Instead, our aim is to modify $A$ so that a unique continuation principle may be applied. In section \ref{sec:ucp}, we augment the properly modified system $(\mA,\mB)v=(\mS,\phi)$ with additional boundary conditions, which in some cases allow us to obtain injectivity results. 

Note that the parametrix $R$ is clearly not unique. It is theoretically constructive, as can be seen by following the proof in \cite{ADN-CPAM-64,S-JSM-73}. However, its practical, for instance numerical, implementation is not straightforward. The modified, higher-order, systems proposed later in the section offer a more direct numerical inversion procedure.

\subsection{Example of power-density measurements}
\label{sec:pdm}
%
To illustrate the theoretical result of this paper, we consider the example of the reconstruction of a scalar coefficient from knowledge of the so-called power density measurements. Consider the scalar elliptic equation
\begin{equation}
\label{eq:ellj}
\mL(\gamma, u_j) := \nabla\cdot \gamma\nabla u_j =0 \quad \mbox{ in } \quad X,\qquad u_j = f_j \quad \mbox{ on } \partial X,\qquad 1\leq j\leq J.
\end{equation}
Here, $X$ is an open domain in $\Rm^n$ for $n\geq2$ with smooth boundary $\partial X$.
The objective is to reconstruct the scalar coefficient $\gamma$, uniformly bounded above and below by positive constants, from knowledge of the power densities
\begin{equation}
\label{eq:pdj}
H_j(x) = \mM(\gamma,u_j) := \gamma(x) |\nabla u_j|^2 ,\qquad x\in X,\qquad 1\leq j\leq J,
\end{equation}
where $u_j$ is the solution to \eqref{eq:ellj}.

This problem and some variations have received significant theoretical and numerical analyses in recent years; see, e.g., \cite{ABCTF-SIAP-08,B-APDE-13,BBMT-13,BS-PRL-10,CFGK-SJIS-09,GS-SIAP-09,KK-AET-11,KS-IP-12,MB-IPI-12}; generalizations for anisotropic coefficients $\gamma$ can be found in \cite{BGM-IP-13,BGM-IPI-13,MB-IP-12,MB-aniso-13}. Explicit reconstruction procedures exist when the number of internal functionals $J$ is sufficiently large; see \cite{BBMT-13,BGM-IP-13,CFGK-SJIS-09,MB-IPI-12,MB-aniso-13}. The case $J=1$, which does not correspond to an elliptic system, was analyzed in \cite{B-APDE-13}. The main features of this analysis are recalled in section \ref{sec:zerolap}. For intermediate values of $J$, the above hybrid inverse problem may not have an explicit reconstruction but may still be modeled by a redundant elliptic system. Such a problem was also analyzed in \cite{KK-AET-11,KS-IP-12}. The conditions of ellipticity of the system $(\mA,\mB)$ are described in detail in section \ref{sec:linell}. A modified system is presented in \ref{sec:elimell}, whereas optimal stability estimates of the form \eqref{eq:stabell} are presented in section \ref{sec:stabestim} for the power density measurement problem.

\subsubsection{The $0-$Laplacian when $J=1$}
\label{sec:zerolap}

When $J=1$, the $2\times2$ system of nonlinear partial differential equations is formally determined with two unknown coefficients $(\gamma,u_1)$. The elimination of $\gamma$ from such a system is in fact straightforward and we obtain the equation for $u:=u_1$ (with $H:=H_1$) given by
\begin{equation}
\label{eq:0Lap}
 \nabla\cdot \dfrac{H(x)}{|\nabla u|^2}\nabla u =0 \quad \mbox{ in } \quad X,\qquad u = f \quad \mbox{ on } \partial X.
\end{equation} 
The above equation may be transformed as 
\begin{equation}
  \label{eq:Cauchy2}
  (I-2\widehat{\nabla u}\otimes\widehat{\nabla u}) : \nabla^2 u + \nabla \ln H\cdot\nabla u =0\,\mbox{ in } X,\qquad u=f \,\,\mbox{ and } \,\, \pdr{u}{\nu} = j \,\, \mbox{ on } \partial X.
\end{equation}
Here $\widehat{\nabla u}=\frac{\nabla u}{|\nabla u|}$ and we introduced Cauchy data on $\partial X$ anticipating the fact that \eqref{eq:Cauchy2} is a quasilinear strictly hyperbolic equation, at least provided that $\widehat{\nabla u}$ is defined. Indeed, we observe that the operator $(I-2\widehat{\nabla u}\otimes\widehat{\nabla u}) : \nabla^2$ is hyperbolic with respect to the (unknown) direction $\widehat{\nabla u}$.

The above problem is analyzed in \cite{B-APDE-13}. The two salient features of that analysis are: (i) unique reconstructions of $u$, and hence $\gamma$, are guaranteed only on part of the domain $X$; see \cite{B-APDE-13}; and (ii) the stability estimates are {\em sub-elliptic}: first-order derivatives of $u$ are controlled by the gradient of $H$ rather than second-order derivatives as would be the case if $(I-2\widehat{\nabla u}\otimes\widehat{\nabla u}) : \nabla^2$ was replaced by an elliptic operator. For $\gamma$, this translates into an inequality of the following form. Let $H$ and $\tilde H$ be two measurements corresponding to the pairs $(u,\gamma)$ and $(\tilde u,\tilde\gamma)$, respectively. Assume that the Cauchy data of $u$ and $\tilde u$ agree on $\partial X$. Then we find that on an appropriate (see \cite{B-APDE-13}) subdomain $\mO\subset X$, we have the following stability estimate:
\begin{equation}
\label{eq:subelliptic}
 \|\gamma-\tilde\gamma\|_{L^2(\mO)} \leq C \| \nabla H-\nabla \tilde H\|_{L^2(\mO)}.
\end{equation}
As we shall see below, this estimate is sub-optimal (with a loss of one derivative) when compared to elliptic estimates of the form \eqref{eq:stabell}. It is however optimal for (principally normal) operators of principal type \cite{H-SP-63}.

\subsubsection{Linearization and ellipticity}
\label{sec:linell}

The linearization of the above problem with $J=1$ is a hyperbolic equation. We now wish to show that redundancy in the data ($J\geq2$) allows us to render the system elliptic under some conditions. We consider two ways to obtain elliptic systems of equations.

We first linearize the coupled system \eqref{eq:ellj}-\eqref{eq:pdj} about solutions $(\gamma,u_j)$ and obtain
\begin{equation}
\label{eq:pdlinsyst}
\begin{array}{rcll}
\nabla\cdot \delta\gamma\nabla u_j + \nabla\cdot \gamma\nabla \delta u_j &=& 0 &\quad \mbox{ in } \, X\\
\delta\gamma|\nabla u_j|^2 + 2 \gamma \nabla u_j\cdot\nabla\delta u_j &=& \delta H_j &\quad \mbox{ in } \, X\\
\delta u_j &=& 0 & \quad \mbox{ on } \partial X.
\end{array}
\end{equation}
We define 
\begin{equation}
\label{eq:Fj}
F_j=\nabla u_j
\end{equation}
and {\em assume} that $|F_j|\geq c_0>0$ is bounded from below by a positive constant uniformly. Such an assumption is valid for and appropriate open set of boundary conditions $f_j$; see, e.g., \cite{B-IO-12,BBMT-13,BU-CPAM-13,BU-IP-10} for details of constructions based on complex geometric optics solutions or unique continuation principles, which we do not reproduce here.

Let $\mA_J$ be the operator applied to $\delta v=(\delta \gamma,\{\delta u_j\})$ in the above system. Its principal part $\mP_J$ has for (principal) symbol $\mp_j$ a $2J\times(J+1)$ matrix given by
\begin{equation}
\label{eq:pdpJ}
\mp_J(x,\xi) =  \left(\begin{matrix}
 |F_1|^2  & 2 \gamma F_1\cdot i\xi & \ldots & 0 \\
F_1\cdot i\xi  & -\gamma|\xi|^2 & \ldots & 0 \\
\vdots & \vdots & \ddots & \vdots \\
 |F_J|^2 & 0 & \ldots & 2 \gamma F_J\cdot i\xi\\
F_J\cdot i\xi  & 0 & \ldots & -\gamma|\xi|^2
\end{matrix}\right).
\end{equation}
When $J=1$, the determinant of $\mp_J$ is given by $\gamma|F_1|^2(2(\hat{F_1}\cdot\xi)^2-|\xi|^2)$, which is hyperbolic with respect to $F_1$ as seen in section \ref{sec:zerolap} above. The above system is in Douglis-Nirenberg form for $s_{2k}=1$, $s_{2k+1}=0$, $t_1=0$, $t_j=1$ for $j\geq2$.

When $J\geq1$, we observe that the sub-determinants with the largest number of powers of $|\xi|^2$ are of the form
\begin{displaymath}
   \gamma^{J}|\xi|^{2(J-1)} |F_j|^2 q_j(x,\xi),\qquad q_j(x,\xi) := 2(\hat{F_j}(x)\cdot\xi)^2-|\xi|^2.
\end{displaymath}
We thus obtain that $\mp_J(x,\xi)$ is injective if 
the quadratic forms $q_j(x,\xi)=0$ for all $1\leq j\leq J$ imply that $\xi=0$. 
\begin{definition}
\label{def:ellcondpj}
Define the quadratic forms and operators
\begin{equation}
\label{eq:qjPj}
q_j(x,\xi) = 2 \big(\hat F_j\cdot \xi)^2 - |\xi|^2 ,\qquad P_j(x,D) = \Delta - 2 \hat F_j\otimes\hat F_j : \nabla\otimes \nabla.
\end{equation}
Here $\hat F_j(x)$ are vector fields of unit vectors defined on $\bar X$. We say that the family $\{q_j\}$ or $\{P_j\}$ is elliptic at $x$ if
\begin{equation}
\label{eq:condellip}
 q_j(x,\xi) =0 \mbox{ for all }1\leq j\leq J \quad\mbox{ implies } \quad \xi=0.
\end{equation}
We say that such families are elliptic in $X$ if they are elliptic in at all points $x\in X$. 
\end{definition}
We can then prove the 
\begin{lemma}
\label{lem:ellipAJ}
We assume that $F_j:=\nabla u_j$ is such that $|F_j|$ is bounded from below by a positive constant on $\bar X$ for all $1\leq j\leq J$.

The operator $\mA_J$ defined in \eqref{eq:pdlinsyst} with principal symbol given in \eqref{eq:pdpJ} is elliptic in $\bar X$ if and only if the above family of quadratic forms $\{q_j\}$ is elliptic in $\bar X$.
\end{lemma}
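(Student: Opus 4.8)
The plan is to read ``$\mA_J$ elliptic'' as the full-rank condition of its principal symbol $\mp_J(x,\xi)$, which for this problem (where $\gamma$ is scalar, so $M=1$) means rank $J+1$, i.e.\ injectivity of the $2J\times(J+1)$ matrix $\mp_J(x,\xi)$ for every $x\in\bar X$ and every $\xi\in\Sm^{n-1}$, as prescribed by the definition of ellipticity in section~\ref{sec:ellipticity}. Since each entry $\mp_{J,ij}$ is homogeneous of degree $s_i+t_j$ in $\xi$ for the Douglis--Nirenberg weights attached to the system, we have $\mp_J(x,\lambda\xi)=D_s(\lambda)\,\mp_J(x,\xi)\,D_t(\lambda)$ with $D_s(\lambda)=\mathrm{diag}(\lambda^{s_i})$, $D_t(\lambda)=\mathrm{diag}(\lambda^{t_j})$ invertible for $\lambda\neq0$; rank is therefore constant along rays, and it suffices to test injectivity at an arbitrary fixed $\xi\neq0$. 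I would then exploit the arrow/block structure: writing a putative kernel vector as $w=(w_0,w_1,\dots,w_J)$, the scalar $w_0$ (the $\delta\gamma$ component) couples to every row-pair, whereas $w_j$ (the $\delta u_j$ component) enters only rows $2j-1$ and $2j$.

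The core of the argument is a per-block elimination. For each $j$, the two rows of $\mp_J(x,\xi)w=0$ read $|F_j|^2 w_0 + 2\gamma\,(F_j\cdot i\xi)\,w_j=0$ and $(F_j\cdot i\xi)\,w_0 - \gamma|\xi|^2 w_j=0$. Since $\gamma>0$ and $\xi\neq0$, the second equation solves $w_j=(F_j\cdot i\xi)w_0/(\gamma|\xi|^2)$; substituting into the first and using $(F_j\cdot i\xi)^2=-(F_j\cdot\xi)^2$ together with $F_j=|F_j|\hat F_j$ collapses the pair to the single scalar identity
\begin{equation}
\label{eq:reduction}
-\frac{|F_j|^2}{|\xi|^2}\,q_j(x,\xi)\,w_0=0,\qquad q_j(x,\xi)=2(\hat F_j\cdot\xi)^2-|\xi|^2 .
\end{equation}
The hypothesis $|F_j|\ge c_0>0$ guarantees the prefactor is nonzero, so \eqref{eq:reduction} is equivalent to $q_j(x,\xi)\,w_0=0$ for each $1\le j\le J$.

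With \eqref{eq:reduction} in hand both implications are immediate. If $\{q_j\}$ is elliptic at $x$ in the sense of Definition~\ref{def:ellcondpj}, then for $\xi\neq0$ the vanishing of all $q_j(x,\xi)$ is impossible, so \eqref{eq:reduction} forces $w_0=0$; feeding this back into $w_j=(F_j\cdot i\xi)w_0/(\gamma|\xi|^2)$ gives $w_j=0$ for all $j$, whence $\ker\mp_J(x,\xi)=\{0\}$ and $\mA_J$ is elliptic. Conversely, if $\{q_j\}$ fails to be elliptic at $x$, choose $\xi_0\neq0$ with $q_j(x,\xi_0)=0$ for all $j$; then $w_0=1$, $w_j=(F_j\cdot i\xi_0)/(\gamma|\xi_0|^2)$ produces, by the same computation run in reverse, a nonzero element of $\ker\mp_J(x,\xi_0)$, so the symbol drops rank and $\mA_J$ is not elliptic. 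Uniformity in $x\in\bar X$ is automatic, the lower bound $|F_j|\ge c_0$ being assumed on all of $\bar X$.

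The computation is elementary linear algebra, so I do not anticipate a genuine analytic obstacle; the points needing care are bookkeeping rather than depth. Specifically, I must (i) confirm that ellipticity for this rectangular Douglis--Nirenberg system is correctly the injectivity of $\mp_J$ and that rank is ray-constant, so that a single $\xi\neq0$ suffices; and (ii) track the sign in $(F_j\cdot i\xi)^2=-(F_j\cdot\xi)^2$ and the factor $|F_j|^2/|\xi|^2$, since these are exactly what turn the $2\times2$ block determinant into the quadratic form $q_j$ and make the lower bound $|F_j|\ge c_0$ indispensable.
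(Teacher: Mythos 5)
Your proof is correct, but it proceeds differently from the paper's. You characterize $\ker \mp_J(x,\xi)$ directly by per-block elimination: the PDE row of block $j$ solves $w_j$ in terms of $w_0$, and back-substitution into the measurement row collapses the pair to $-|F_j|^2|\xi|^{-2}q_j(x,\xi)\,w_0=0$, which yields both implications at once (noting, as you implicitly do, that $w_0=0$ forces all $w_j=0$ via the $-\gamma|\xi|^2$ entries, so any nontrivial kernel vector has $w_0\neq0$). The paper instead argues via $(J+1)\times(J+1)$ minors: sufficiency is obtained before the lemma by exhibiting the sub-determinants $\gamma^{J}|\xi|^{2(J-1)}|F_j|^2 q_j(x,\xi)$, and necessity by a case analysis showing that any non-vanishing minor can be traded for one of this form. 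Your route is cleaner and symmetric in the two directions, and avoids the combinatorial bookkeeping over which minors survive; the paper's minor computation has the side benefit of producing the explicit dominant sub-determinants that are reused in the subsequent Lopatinskii analysis (where $q_j(x,N)\partial_z^2$ appears as the leading coefficient). Your remark (i) on ray-constancy of the rank via the Douglis--Nirenberg homogeneity $\mp_J(x,\lambda\xi)=D_s(\lambda)\mp_J(x,\xi)D_t(\lambda)$ is correct but not strictly needed, since the paper's definition of ellipticity already quantifies only over $\xi\in\Sm^{n-1}$ and the ellipticity of $\{q_j\}$ in Definition~\ref{def:ellcondpj} is likewise a homogeneous condition.
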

\begin{proof}
  We have already seen the sufficiency of the condition. Let us prove its necessity and assume that $\mp_J(x,\xi)$ is maximal rank for $x\in X$ and $\xi\not=0$. This means that one determinant of $(J+1)\times(J+1)$ sub-matrices of $\mp_J$ is non-vanishing. Each column of $\mp_J$ beyond the first one has two non-vanishing entries. For all $1\leq j\leq J$ except for one entry $j_0$, then either $-\gamma(x)|\xi|^2$ or $2\gamma F_j\cdot i\xi$ appears as a multiplicative factor in the determinant of the sub-matrix. Since $\gamma(x)|\xi|^2$ never vanishes, we may discard the determinant involving $2\gamma F_j\cdot i\xi$. We thus obtain that if one determinant of a sub-matrix does not vanish, then that determinant may be chosen as $\gamma^{J}|\xi|^{2(J-1)} |F_{j_0}|^2 q_{j_0}(x,\xi)$. Since by assumption $\gamma^{J}|\xi|^{2(J-1)} |F_{j_0}|^2$ is bounded away from $0$, we observe that the injectivity of $\mp_J$ implies that (at least) one of the quadratic forms $q_{j}(x,\xi)$ does not vanish. Thus, $\mA_J$ being elliptic implies that $\{q_j\}$ is elliptic in the sense of definition \ref{def:ellcondpj}.
\end{proof}

The ellipticity of $\mA_J$ is thus a consequence of the fact that the null cones of quadratic forms intersect only at $0$. 
We have the following properties:
\begin{proposition}
\label{prop:ell}
 (i) Let $x\in\Rm^2$ and assume that $\hat F_1(x)$ and $\hat F_2(x)$ are neither parallel nor orthogonal. Then the corresponding $(q_1,q_2)$ in \eqref{eq:qjPj} form an elliptic family at $x$. 
 
 (ii) In dimension $n\geq2$, let $\hat F_1(x)$ and $\hat F_2(x)$ be two different directions and define $\hat F_3(x)=\alpha\hat F_1(x)+\beta\hat F_2(x)$ with $\alpha\beta\not=0$ such that $|\hat F_3(x)|=1$. Then the corresponding $(q_1, q_2, q_3)$ in \eqref{eq:qjPj} form an elliptic family at $x$. 
 
 (iii) In dimension $n\geq3$, a family $(q_1,q_2)$ is never elliptic at a given point $x$ independent of the choice of $\hat F_1,\hat F_2$.
\end{proposition}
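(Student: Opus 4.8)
The plan is to reduce the vanishing of each $q_j$ to a single scalar identity. Fixing the point $x$ and writing $s_j:=\hat F_j\cdot\xi$, the relation $q_j(x,\xi)=0$ in \eqref{eq:qjPj} is equivalent to $s_j^2=\tfrac12|\xi|^2$. Thus a common nonzero zero of a family forces all the $s_j^2$ to equal the same positive number $\tfrac12|\xi|^2$, and in particular each $s_j\neq0$. The whole proposition then reduces to understanding how this constraint interacts with the basis structure in the plane $V:=\mathrm{span}(\hat F_1,\hat F_2)$ and with the linear relation $\hat F_3=\alpha\hat F_1+\beta\hat F_2$. Throughout I write $\rho:=\hat F_1\cdot\hat F_2$, so that $|\rho|<1$ exactly when $\hat F_1,\hat F_2$ are not parallel, and I use that for $\xi\in V$ one has $|\xi|^2=(1-\rho^2)^{-1}(s_1^2-2\rho s_1 s_2+s_2^2)$, obtained by inverting the Gram matrix of $\{\hat F_1,\hat F_2\}$.

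For (i), in $\Rm^2$ a common zero gives $s_2=\sigma s_1$ with $\sigma=\pm1$ and $s_1\neq0$; substituting into $s_1^2=\tfrac12|\xi|^2$ and using the Gram formula collapses to $\rho(\rho-\sigma)=0$, so $\rho=0$ (orthogonal) or $\rho=\sigma=\pm1$ (parallel). Excluding both cases leaves no nonzero common zero, which is ellipticity. For (ii) the identity $s_3=\alpha s_1+\beta s_2=(\alpha+\sigma\beta)s_1$ together with $s_3^2=s_1^2$ yields the necessary condition $(\alpha+\sigma\beta)^2=1$; subtracting this from the normalization $1=|\hat F_3|^2=\alpha^2+\beta^2+2\alpha\beta\rho$ gives $2\alpha\beta(\sigma-\rho)=0$, whence $\rho=\sigma=\pm1$ because $\alpha\beta\neq0$. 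This contradicts that $\hat F_1,\hat F_2$ are distinct directions, so no nonzero common zero exists for any $n\geq2$. Note this step uses $|\hat F_3|=1$ essentially: it is what makes the three constants $\tfrac12|\xi|^2$ coincide.

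For (iii) I would instead \emph{construct} a nonzero common zero of $(q_1,q_2)$. If $\hat F_1,\hat F_2$ are parallel then $q_1=q_2$ and its null cone is nontrivial, so assume $|\rho|<1$. Choosing $s_1\neq0$, $\sigma=\mathrm{sign}(\rho)$ and $s_2=\sigma s_1$, the Gram formula gives the in-plane component $\xi_V\in V$ with $\hat F_i\cdot\xi_V=s_i$ and $|\xi_V|^2=2s_1^2(1-\rho\sigma)(1-\rho^2)^{-1}\leq 2s_1^2$, the inequality being equivalent to $\rho^2\leq|\rho|$. Since $n\geq3$, $\dim V^\perp=n-2\geq1$, so I may append $\xi_\perp\in V^\perp$ with $|\xi_\perp|^2=2s_1^2-|\xi_V|^2\geq0$; then $\xi:=\xi_V+\xi_\perp\neq0$ satisfies $(\hat F_i\cdot\xi)^2=s_i^2=\tfrac12|\xi|^2$, i.e. $q_1(\xi)=q_2(\xi)=0$. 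The routine parts are the algebraic collapses in (i) and (ii); the point deserving the most care is this last dimension count, where the very same Gram computation that \emph{obstructs} a common zero in the plane (part (i)) instead \emph{permits} one once a transverse direction exists. The main thing to get right is the inequality $|\xi_V|^2\leq2s_1^2$ via the choice $\sigma=\mathrm{sign}(\rho)$, ensuring the leftover length can always be absorbed into $V^\perp$ precisely when $n\geq3$.
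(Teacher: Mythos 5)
Your proof is correct and follows essentially the same route as the paper: reduce each $q_j(x,\xi)=0$ to $(\hat F_j\cdot\xi)^2=\tfrac12|\xi|^2$, exploit the sign dichotomy $s_2=\pm s_1$, use linearity $s_3=\alpha s_1+\beta s_2$ together with $|\hat F_3|=1$ for (ii), and in (iii) build a common zero from an in-plane vector with equal projections plus a transverse component absorbing the leftover length (the paper's $\xi=\hat F_1+\hat F_2+\lambda F_3$ is exactly your construction with $s_1=s_2=1+\rho$). The only difference is presentational: your Gram-matrix identity makes (i) and (iii) two sides of the same computation, where the paper treats (i) as immediate and (iii) by an explicit ansatz.
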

\begin{proof}
  (i) In dimension $n=2$, it is clear that the null cones (where $q_j$ vanishes, two lines of vectors $\xi$ in $\Rm^2$) coincide if and only if $\hat F_1$ and $\hat F_2$ are either parallel or orthogonal.
  
(iii) The result is obvious if $\hat F_1=\pm \hat F_2$. Assume otherwise and let $F_3$ be a unit vector orthogonal to $\hat F_1$ and $\hat F_2$. Assume $\hat F_1\cdot\hat F_2\geq0$ for otherwise change the sign of $\hat F_2$ (which does not modify the quadratic forms $q_j$). $\xi$ belongs to the intersection of the null cones $\{q_j(x,\xi)=0\}$ for $j=1,2$ if $2(\hat F_1\cdot\xi)^2=2(\hat F_2\cdot\xi)^2=|\xi|^2$. Define $\xi=\hat F_1+\hat F_2+\lambda F_3$. The first constraint is satisfied when $2(1+\hat F_1\cdot\hat F_2)^2 = |\hat F^1+\hat F^2|^2+\lambda^2$, i.e., when  $\lambda=\pm \big(2\hat F_1\cdot\hat F_2+ 2(\hat F_1\cdot\hat F_2)^2)^{\frac12}$. But then it is clear that this $\xi\not=0$ also belongs to the second cone so that $(q_1,q_2)$ is not elliptic.

(ii) Let us assume that $\hat F_3=\alpha \hat F_1+\beta \hat F_2$ with $\alpha\beta\not=0$. Then $\xi$ belongs to the three null cones if 
\begin{displaymath}
 \Big(\dfrac{\alpha \hat F_1+\beta \hat F_2}{|\alpha \hat F_1+\beta \hat F_2|}\cdot\xi\Big)^2 = \dfrac12 |\xi|^2 = (\hat F_1\cdot \xi) ^2 = (\hat F_2\cdot\xi)^2.
\end{displaymath}
Expanding the first constraint, we get
\begin{displaymath}
 \alpha^2(\hat F_1\cdot\xi)^2 + \beta^2(\hat F_2\cdot\xi)^2 + 2\alpha\beta \hat F_1\cdot\xi\hat F_2\cdot\xi = \frac12|\xi|^2(\alpha^2+\beta^2+2\alpha\beta \hat F_1\cdot\hat F_2).
\end{displaymath}
The last two constraints imply that $2\hat F_1\cdot\xi\hat F_2\cdot\xi=\eps|\xi|^2$ with $\eps=\pm1$. Combined with the latter, they also imply that
\begin{displaymath}
\alpha\beta \eps = \alpha\beta \hat F_1\cdot\hat F_2.
\end{displaymath}
Since $\alpha\beta\not=0$, this can only occur if $\hat F_1=\pm\hat F_2$, which is a contradiction.
\end{proof}

From the practical point of view, this result says that if $F_1=\nabla u_1$ and $F_2=\nabla u_2$ are not parallel, then the internal functionals $H_j$ for $u_1$, $u_2$, and $u_1+u_2$ (with $F_3=\nabla (u_1+u_2)=\nabla u_1+\nabla u_2$) generate three quadratic forms $q_j$, $1\leq j\leq 3$ that form an elliptic family. Such internal functionals are obtained by choosing three boundary conditions of the form $f_1$, $f_2$, and $f_3=f_1+f_2$. This result holds for all $n\geq2$.

\subsubsection{Sufficient conditions for ellipticity} We have thus obtained the following result. In dimension $n=2$, $J\geq2$ is necessary for $\mA_J$ to be elliptic. Moreover, $J=2$ is sufficient when $n=2$ if $\nabla u_1$ and $\nabla u_2$ are nowhere parallel or orthogonal. In dimension $n\geq3$, $J\geq3$ is necessary for $\mA_J$ to be elliptic. Moreover, $J=3$ is sufficient for $\mA_J$ to be elliptic in all dimensions $n\geq2$ by choosing as boundary conditions, e.g.,  $(f_1,f_2,f_1+f_2)$ provided that $\nabla u_1$ and $\nabla u_2$ are nowhere parallel.

\subsubsection{Boundary conditions and Lopatinskii condition.} In order to obtain an optimal theory of stability estimates, the system needs to be augmented with boundary conditions that satisfy the Lopatinskii condition. Dirichlet conditions on $\delta u_j$ and no condition on $\delta\gamma$ satisfy such conditions. Indeed, we need to show that $v(z)=(\delta\gamma(z),\ldots,\delta u_J(z))\equiv 0$ is the only solution to
 \begin{equation}\label{eq:pdLP}
 \begin{array}{l}
\delta u_j(0)=0,\qquad    (iF_j\cdot\zeta+F_j\cdot N \partial_z) \delta\gamma + \gamma (i\zeta+\partial_z)^2 \delta u_j =0 ,\,\\|F_j|^2 \delta\gamma +2\gamma(iF_j\cdot\zeta+ F_j\cdot N \partial_z) \delta u_j=0,\,\, z>0
\end{array}
\end{equation}
with $v(z)$ vanishing as $z\to\infty$ for $N=\nu(x)$ at $x\in\partial X$ and $z$ coordinate along $-N$. Eliminating $\delta\gamma$ as earlier, we deduce that 
\begin{displaymath}
 \Big( |F_j|^2 (i\zeta+\partial_z)^2 - 2 (iF_j\cdot\zeta+F_j\cdot N\partial_z)^2 \Big) \delta u_j=0
\end{displaymath}
The leading term of the above second order equation with constant coefficients is $|F_j|^2q_j(x,N)\partial_z^2$.  If $q_j(x,N)\not=0$ for some $j=j_0$, which is the condition for joint ellipticity described in definition \ref{def:ellcondpj}, then the same proof showing that Dirichlet conditions cover the Laplace operator show that $\delta u_{j_0}=0$. We then deduce that $\delta\gamma=0$ from the second line in \eqref{eq:pdLP} and by ellipticity on the first line in \eqref{eq:pdLP} that all $\delta u_j=0$ and hence $v\equiv0$.

Let us define the spaces $\mX=W^l_{p}(X)\times W^{l+1}_p(X;\Rm^J)$ and $\mY^i=W^l_p(X;\Rm^{2J})$ with $l$ large enough so that the latter spaces are all algebras; i.e., $pl>n$. We also define $\mY^\partial = W^{l+1-\frac 1p}_p(X;\Rm^{J})$ for the traces of $\delta u_j$ on $\partial X$, which vanish by construction. Then we observe that $(\mA,\mB)$ in \eqref{eq:pdlinsyst} maps $\mU(p,l)=\mX$ to $\mR(p,l)=\mY^i\times\mY^\partial$. Moreover, the coefficients $\gamma$ and $u_j$ appearing in the definition of $\mA$ belong to $W^l_{p}(X)$ and $W^{l+1}_p(X;\Rm^J)$, respectively, by assumption for $\gamma$ and by elliptic regularity for $u_j$ solution of the elliptic problem \eqref{eq:ellj}.

\subsubsection{Elimination and ellipticity}
\label{sec:elimell}

The above system involves $J+1$ unknowns. Strategies to lower the dimension of the system include: (i) eliminating $\gamma$ as we did in obtaining \eqref{eq:0Lap}; or (ii) eliminating all $u_j$. The second strategy follows from the observation
\begin{displaymath}
   \delta u_j = L_\gamma^{-1} (\nabla\cdot \delta\gamma \nabla u_j),\qquad L_\gamma = -\nabla\cdot\gamma \nabla,
\end{displaymath}
with $L_\gamma^{-1}$ defined by solving $L_\gamma$ with vanishing Dirichlet conditions.

The result is a redundant system of the form $P_j\delta\gamma=\delta H_j$ where $P_j$ is a pseudo-differential operator with principal symbol given by $|F_j|^2q_j(x,\xi)$. The redundant system for $\delta\gamma$ is therefore elliptic under the same conditions as those for \eqref{eq:pdpJ} above. The main difficulty is that $P_j$ is no longer local (no longer a (system of) partial differential equation). It is not clear how one may approach the question of uniqueness for such a system. The unique continuation principles presented in section \ref{sec:ucp} do not apply directly. See \cite{KS-IP-12} for an analysis of such a method.

The first strategy based on the elimination of $\delta\gamma$ preserves the differential structure of the original system since $\delta\gamma$ appears undifferentiated in the second equation in \eqref{eq:pdlinsyst}. We find that
\begin{equation}
\label{eq:deltauj}
\nabla\cdot\gamma\big(-\nabla \delta u_j + 2 \hat F_j \hat F_j\cdot\nabla \delta u_j\big) = \nabla\cdot \dfrac{\delta H_j}{|F_j|^2} F_j.
\end{equation}
The symbol for such an equation is then given by $\gamma q_j(x,\xi)$ so that the above equation is not elliptic, as we already know. However, the elimination of $\delta\gamma$ also provides the constraint
\begin{equation}
\label{eq:constraintjk}
2\gamma\dfrac{1}{|F_j|^2} F_j\cdot\nabla \delta u_j - \dfrac{\delta H_j}{|F_j|^2}  =  2\gamma\dfrac{1}{|F_k|^2} F_k\cdot\nabla \delta u_k - \dfrac{\delta H_k}{|F_k|^2} \quad 1\leq j<k\leq J.
\end{equation}
It turns out that the combination of \eqref{eq:deltauj} with \eqref{eq:constraintjk} makes the redundant system for the $\{u_j\}$ elliptic provided $q_j(x,\xi)=0$ for all $j$ implies that $\xi=0$ as above. To see this, assume that 
\begin{displaymath}
  q_j(x,\xi) \delta u_j =0,\qquad |F_k|^2F_j\cdot\xi \delta u_j = |F_j|^2 F_k\cdot\xi \delta u_k.
\end{displaymath}
Let $\xi\not=0$. Then, not all $q_j(x,\xi)$ vanish. Assume that $q_1(x,\xi)\not=0$. Then $\delta u_1=0$ so that $F_j\cdot\xi \delta u_j=0$ for all $j\geq2$. However, $F_j\cdot\xi$ and $q_j(x,\xi)$ cannot vanish at the same time so that $\delta u_j=0$. This shows the injectivity of the symbol of the redundant system, which is easily found to be in Douglis-Nirenberg form. 

Moreover, unlike the redundant system of strategy (ii), the above system \eqref{eq:deltauj}-\eqref{eq:constraintjk} is differential. It may therefore be augmented with boundary conditions that satisfy the Lopatisnkii conditions. We leave the details to the reader to verify, as we did in \eqref{eq:pdLP}, that such conditions are satisfied for Dirichlet conditions $\delta u_j=0$ on $\partial X$ under the same conditions guaranteeing ellipticity in $X$. The above system for the $(\{\delta u_j\})$ is therefore elliptic when the family of quadratic forms $\{q_j\}$ is elliptic.

\subsubsection{Stability estimates}
\label{sec:stabestim}

Both systems of differential equations for $(\delta\gamma,\{\delta u_j\})$ and for $(\{\delta u_j\})$ after elimination of $\delta\gamma$ are elliptic with Dirichlet conditions for $\delta u_j$ under the conditions stated, e.g., in Proposition \ref{prop:ell}. We may therefore apply the general theory of elliptic redundant systems described above and obtain a parametrix for both systems. Moreover, in both cases, we obtain the following stability estimates
\begin{equation}
\label{eq:pdstab}
  \|\delta\gamma-\delta\tilde\gamma\|_{W_p^l(X)} + \dsum_{j=1}^J \|\delta u_j-\delta\tilde u_j\|_{W_p^{l+1}(X)} \leq C \dsum_{j=1}^J \|\delta H_j-\delta \tilde H_j\|_{W_p^l(X)} +C_2 \dsum_{j=1}^J \|\delta u_j-\delta\tilde u_j\|_{L_p(X)}.
\end{equation}
In other words, $\delta\gamma$ and $\nabla \delta u_j$ are reconstructed from $\delta H_j$ with {\em no loss} of derivative, unlike the construction in \eqref{eq:subelliptic}. However, we are not guaranteed that any of the linear systems is indeed invertible, and hence the presence of the last term on the right-hand-size in \eqref{eq:pdstab}. Proving the injectivity of the above systems is much more delicate than proving their ellipticity. The following section proposes some generic strategies to do so. 

\subsubsection{Generalization to similar models}

The results presented above generalize to the setting where the internal functionals are of the form
\begin{equation}
\label{eq:alphaH}
H_j(x) = \gamma^{\alpha} |\nabla u_j|^2,
\end{equation}
where $\alpha\geq0$. The case $\alpha=1$ was treated above. The case $\alpha=2$ corresponds to the setting of CDII and MREIT \cite{NTT-Rev-11,SW-SR-11}. As shown, e.g., in \cite{KS-IP-12,MB-IPI-12}, the coupled problem is elliptic with $J=1$ for $\alpha>2$, is degenerate elliptic for $\alpha=2$, and is hyperbolic for $\alpha<2$. The cases $\alpha\geq2$ become elliptic for $J$ chosen sufficiently large. We do not consider these extensions further here.

\subsection{Sufficient conditions for ellipticity}
\label{sec:ellipticitycond}

To summarize the above derivation, we have modeled the nonlinear hybrid inverse problem as a coupled system of nonlinear partial differential equations \eqref{eq:syst}. Its linearization is then given by \eqref{eq:linsyst}. These systems have $J+M$ unknowns for $2J\geq J+M$ equations. When $J$ is large compared to $M$, we expect the principal part $\mA_0$ of $\mA$ in the Douglis-Nirenberg sense to be elliptic since a redundant matrix is ``more likely" to be full-rank than a less elongated matrix. 


The matrix $\mA_0$ is full rank if we can prove that, for well chosen boundary conditions $f_j$, the internal functionals $H_j$ are sufficiently independent. Such a result is problem-dependent. In the setting of power-density measurements, we have obtained that the boundary conditions $f_j$ were well-chosen if the quadratic form $q_j(x,\xi)$ were jointly elliptic at each point $x\in \bar X$. A sufficient condition to do so is to choose two boundary conditions $f_1$ and $f_2$ such that $F_1=\nabla u_1$ and $F_2=\nabla u_2$ are nowhere co-linear (for the quadratic forms corresponding to the three boundary conditions $f_1$, $f_2$, and $f_1+f_2$ are then jointly elliptic).

Strategies to ensure that, e.g., $\nabla u_1$ and $\nabla u_2$ are nowhere co-linear have been presented in, e.g., \cite{B-APDE-13,BBMT-13,BU-IP-10,BU-CPAM-13}, see also the review \cite{B-IO-12}. Such strategies are based on the use of Complex Geometrical Optics when the latter are available, or on the use on local construction and unique continuation properties of the operators in \eqref{eq:pde} as described in \cite{BU-CPAM-13}. In both settings, it is proved that qualitative properties of solutions to elliptic equations, such as for instance the independence of gradients of solutions, hold for an open set of boundary conditions $f_j$. More explicit constructions of such boundary conditions are also proposed in \cite{BC-JDE-13}.

%
\section{Injectivity results for the linearized problem}
\label{sec:ucp}
%

In this section, we consider two methods to obtain injectivity of an elliptic operator $\mA$ augmented with appropriate boundary conditions $\mB$. 
Both are based on replacing the redundant system $\mA$ by its normal, determined, form $\mA^t\mA$, and augmenting it with appropriate Dirichlet boundary conditions that ensure its injectivity under certain assumptions. The first method invokes a Holmgren unique continuation principle while the second method is based on unique continuation principles that are consequences of Carleman estimates.

Here, the operator $\mA^t$ is defined such that $(\mA^t)_{ki}=(\mA_{ik})^t=:\mA^t_{ik}$, where $(\mA_{ik})^t$ is the formal adjoint to $\mA_{ik}$ for the usual inner product $(\cdot,\cdot)$ on $L^2(X)$.

One difficulty with operators $\mA$ that are elliptic in the Douglis-Nirenberg (DN) sense is that the normal operator $\mA^t\mA$ need not be elliptic, even in the DN sense. Consider for instance the $2\times2$ system in one independent variable defined by $\mA_{11}=a$, $\mA_{12}=\mA_{21}=\partial_x$ and $\mA_{22}=\partial^2_x$, which is DN elliptic with $s_1=t_1=0$ and $s_2=t_2=1$ when $a\not=1$. Defining $\mC=\mA^T\mA$, the principal term in $\mC$ is $\mC_{11}=\partial_x^2$, $\mC_{12}=\mC_{21}=\partial_x^3$, $\mC_{22}=\partial_x^4$, which is independent of $a$ and not elliptic.

One way to ensure that $\mA^t\mA$ is elliptic when $\mA$ is elliptic is to assume that $\tau=t_j$ independent of $j$ and $s_i=0$ independent of $i$.  We then verify that the leading term in $(\mA^t\mA)_{kl}$ is a differential operator of degree equal to $2\tau$ and that  $\mA^t\mA$ is a strongly elliptic system of size $(J+M)\times(J+M)$; see, e.g., \cite[Proposition 4.1.16]{T-AV-95}. The principal part of ${\rm Det}(\mA^t\mA)(x,\xi)$ is equal to ${\rm Det}(\mA_0^t\mA_0)(x,\xi)$ and is a polynomial of degree $2(J+M)\tau$ that is uniformly bounded from below for all $x\in \bar X$ and $\xi\in\Sm^{n-1}$ by assumption of ellipticity.

The results in \cite{C-JMAA-91} show that an elliptic system in DN form can always be transformed into an elliptic overdetermined system of first-order equations. The procedure increases the order of the system and differentiates any row involving a term with $s_i+t_j=0$. This forces us to impose boundary conditions on the parameters $\delta\gamma$ and the solutions $\delta u_j$ that may not be necessary in the definition of $(\mA,\mB)$ above. 

For the rest of the section, we {\em assume} that $\mA$ has been recast into a form where $\tau=t_j$ is independent of $j$ and $s_i=0$ is independent of $i$; for instance with $\tau=1$ or $\tau=2$ as described in \cite{C-JMAA-91}. We still keep the notation $2J$ and $J+M$ for the size of the system $\mA$ so that with this notation, $\mA^t\mA$ is a system of size $(J+M)\times(J+M)$.



Let us augment $\mA^t\mA$ with the Dirichlet boundary conditions
\begin{equation}
\label{eq:Dirichlet} 
\big(\pdr{}\nu\big)^q v_j =\phi_{qj}\quad\mbox{ on } \partial X, \qquad 0\leq q\leq \tau-1,\quad 1\leq j\leq J+M.
\end{equation}
We recast the above constraints as $\mD v=\phi$ on $\partial X$. It is proved in \cite[p.43-44]{ADN-CPAM-64} that such boundary conditions cover $\mA^t\mA$, i.e., that the Lopatinskii conditions are satisfied. We thus consider the problem
\begin{equation}
\label{eq:normal}
\mA^t\mA v = \mA^t\mS\quad\mbox{ in } X,\qquad \mD v = \phi\quad\mbox{ on } \partial X.
\end{equation}
Since $N:=(\mA^t\mA,\mD)$ is elliptic, the above system admits a left parametrix $G$ such that $GN=I-T$ with $T$ compact  in $\mU(p,l)$. Moreover, we have the stability estimate
\begin{equation}
\label{eq:stabnormal}
\dsum_{j=1}^{J+M} \|v_j\|_{W_p^{l+\tau}(X)} \leq C \Big( \dsum_{j=1}^{J+M} \|(\mA^t\mS)_j\|_{W_p^{l-\tau}(X)} + \dsum_{j,q} \| \phi_{qj}\|_{W_p^{l-\tau+q-\frac 1p}(\partial X)} \Big) + C_2 \dsum_{j=1}^{J+M} \|v_j\|_{L^p(X)},
\end{equation}
We verify that 
\begin{displaymath}
  \|\sum_{i=1}^{2J} \mA^t_{ji} S_i \|_{W_p^{l-\tau}(X)} \leq C \sum_{i=1}^{2J} \|S_i\|_{W_p^{l}(X)},
\end{displaymath}
so that \eqref{eq:stabnormal} may also be seen as an analog of \eqref{eq:stabell}.

Our objective is to find sufficient conditions under which the above system is injective, and hence invertible, so that $C_2=0$ in the above estimates. We start with the following simple lemma:
\begin{lemma}
\label{lem:Normal}
Let us assume that $v$ is a solution of 
\begin{equation}
\label{eq:normal0}
\mA^t\mA v = 0 \quad\mbox{ in } X,\qquad \mD v = 0\quad\mbox{ on } \partial X.
\end{equation}
Then $v$ is a solution of 
\begin{equation}
\label{eq:syst0}
\mA v = 0 \quad\mbox{ in } X,\qquad \mD v = 0\quad\mbox{ on } \partial X.
\end{equation}
\end{lemma}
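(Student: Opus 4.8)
The plan is to run a standard energy/Green's-formula argument: pair the equation $\mA^t\mA v=0$ with $v$ in $L^2(X)$ and integrate by parts so as to transfer $\mA^t$ back onto $v$, thereby producing the square norm $\|\mA v\|^2$ together with boundary contributions. Since $\mA^t$ is by construction the formal adjoint of $\mA$ for the inner product $(\cdot,\cdot)$, and (by the standing reduction) every entry of $\mA$ is a differential operator of the single order $\tau$, the Lagrange identity takes the form
\[
 (\mA v,\mA v) - (v,\mA^t\mA v) = \int_{\partial X} B(v,\mA v)\, dS,
\]
where $B(\cdot,\cdot)$ is the boundary bilinear form generated by integrating the order-$\tau$ operator $\mA^t$ by parts. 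Invoking the hypothesis $\mA^t\mA v=0$, the second term on the left vanishes and the identity reduces to $\|\mA v\|_{L^2(X)}^2 = \int_{\partial X} B(v,\mA v)\, dS$. Everything thus comes down to showing the boundary integral is zero.

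The step I would treat with the most care --- and the only genuine obstacle --- is the bookkeeping of $B$. For an operator of order $\tau$, each term of the Lagrange boundary form is a product in which the factor coming from the first slot is a derivative $D^\beta v$ of order $|\beta|\le\tau-1$, while the factor coming from the second slot (here $\mA v$) may be a derivative of order up to $2\tau-1$; crucially, substituting $w=\mA v$ raises only the order of the second factor and never that of the $v$-factor. Hence every term of $B(v,\mA v)$ carries a factor $D^\beta v$ with $|\beta|\le\tau-1$. It remains to note that the Dirichlet data $\mD v=0$ in \eqref{eq:Dirichlet}, which prescribes $(\partial_\nu)^q v_j=0$ on $\partial X$ for $0\le q\le\tau-1$, forces \emph{all} partial derivatives $D^\beta v$ with $|\beta|\le\tau-1$ to vanish on $\partial X$: a derivative of order $\le\tau-1$ decomposes into normal derivatives $(\partial_\nu)^q$, $q\le\tau-1$, composed with tangential derivatives, and tangential differentiation of the identically-zero traces $(\partial_\nu)^q v|_{\partial X}$ again gives zero. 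Therefore $B(v,\mA v)\equiv0$ on $\partial X$. I would confirm the order count on the model $\mA=\partial_x^\tau$, where $B(v,\partial_x^\tau v)$ is a sum of terms $(\partial_x^\beta v)(\partial_x^{2\tau-1-\beta}v)$ with $\beta\le\tau-1$, each annihilated by $v=\partial_x v=\cdots=\partial_x^{\tau-1}v=0$; the system case then follows by linearity and the componentwise definition of $\mA^t$.

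With the boundary integral gone we obtain $\|\mA v\|_{L^2(X)}^2=0$, hence $\mA v=0$ in $X$. The boundary condition $\mD v=0$ on $\partial X$ is carried over unchanged, so $v$ solves \eqref{eq:syst0}. All the integrations by parts are legitimate because, by ellipticity of $N=(\mA^t\mA,\mD)$ and elliptic regularity, the solution $v$ belongs to $\mU(p,l)=W_p^{l+\tau}(X)$ with $l$ large, which provides more than enough derivatives for the Lagrange identity to hold and for $\mA v$ and its relevant traces on $\partial X$ to be well defined.
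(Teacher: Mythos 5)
Your proof is correct and follows essentially the same route as the paper's: pair $\mA^t\mA v=0$ with $v$, integrate by parts using that every $\mA^t_{ik}$ has order $\tau$, observe that the Dirichlet data $(\partial_\nu)^q v_j=0$, $0\le q\le\tau-1$, annihilates all boundary terms (since each carries a factor $D^\beta v$ with $|\beta|\le\tau-1$), and conclude $\sum_i\|(\mA v)_i\|^2=0$. Your version merely makes explicit the order bookkeeping in the Lagrange boundary form that the paper leaves implicit.
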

Note that the Dirichlet conditions associated to $\mA^t\mA$ are now redundant for \eqref{eq:syst0}. For example, consider $\mA=\Delta+c(x)$ a scalar operator. For some choices of $c(x)$ (for instance $c(x)=\lambda$ an eigenvalue of $-\Delta$ on $X$ with Dirichlet conditions), $\mA$ is not invertible. However, \eqref{eq:syst0} corresponds to $(\Delta+c(x))v=0$ with both $v=0$ and $\partial_\nu v=0$ on $\partial X$. It is then known that the unique solution to the above constraints is $v=0$ and hence $(\mA^t\mA,\mD)$ is injective.
\begin{proof}
 We observe that the differential operator $(\mA^t)_{ki}=(\mA_{ik})^t=:\mA^t_{ik}$ is of the same order $\tau$ as $\mA_{ik}$. Since $\partial_\nu^q v_k=0$ for $0\leq q\leq \tau-1$, we obtain by integrations by parts that
 \begin{displaymath}
 0=  \sum_{ijk}(\mA^t_{ik}\mA_{ij} v_j,v_k) = \sum_i (\sum_j \mA_{ij} v_j, \sum_k \mA_{ik} v_k) =\sum_i \|(\mA v)_i\|^2.
\end{displaymath}
This implies the result.
\end{proof}
%
\subsection{Holmgren unique continuation and generic results}
\label{sec:ucph}
%

The above result shows that the injectivity of \eqref{eq:normal} is a consequence of the injectivity of \eqref{eq:syst0} with redundant boundary conditions. The operator $\mA$ is therefore augmented with more boundary conditions than is necessary to render it elliptic. This is a possible price to pay to guarantee that the solution to the normal system \eqref{eq:normal} is uniquely defined.

In general, however, even with redundant boundary conditions, it is not entirely straightforward to ensure that $v=0$ is the only solution to \eqref{eq:syst0}. In this section, we consider the case where $\mA$ is well-approximated by an operator with analytic coefficients.  We have the following result adapted from \cite{H-I-SP-83}.
\begin{proposition}\label{prop:WFanalytic}
 Let $\mA_A(x,D)$ be a $2J\times (J+M)$ system of differential equations with analytic coefficients in $X$ such  that $\mA_A^t \mA_A(x,D)$ is an elliptic operator for all $x\in X$. Assume that $\mA_A v=0$ in $X$. Then $v$ is analytic in $X$.
\end{proposition}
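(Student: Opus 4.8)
The plan is to collapse the overdetermined system onto its square normal operator and then quote analytic hypoellipticity. First, exactly as in the proof of Lemma~\ref{lem:Normal}, I would note that $\mA_A v=0$ in $X$ forces $Bv=0$ in $X$ for the determined $(J+M)\times(J+M)$ operator $B:=\mA_A^t\mA_A$. By hypothesis $B$ is elliptic on $X$, and its coefficients are analytic: the formal adjoint of a differential operator with analytic coefficients again has analytic coefficients (it only differentiates and multiplies the given coefficients), and composition preserves analyticity. The statement is therefore reduced to the assertion that every solution of a determined elliptic system with analytic coefficients is real analytic.

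For this reduced problem I would invoke the analytic wave front set ${\rm WF}_A$ of \cite{H-I-SP-83}. Writing $\sigma_0(B)(x,\xi)$ for the principal symbol, ellipticity means that $\sigma_0(B)$ is an invertible $(J+M)\times(J+M)$ matrix for all $x\in X$ and $\xi\neq0$, so the characteristic variety ${\rm Char}(B)$ is empty. Because $\sigma_0(B)$ is invertible with analytic entries, one may build a microlocal analytic parametrix $E$ (a matrix of analytic pseudodifferential operators with principal symbol $\sigma_0(B)^{-1}$) satisfying $EB=I+\mR$ with $\mR$ analytic-regularizing, and the calculus of \cite{H-I-SP-83} then yields the inclusion ${\rm WF}_A(v)\subset{\rm Char}(B)\cup{\rm WF}_A(Bv)$. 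Since ${\rm Char}(B)=\emptyset$ and $Bv=0$ gives ${\rm WF}_A(Bv)=\emptyset$, we conclude ${\rm WF}_A(v)=\emptyset$, i.e.\ $v$ is analytic in $X$.

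The main obstacle is the analytic microlocal parametrix itself, which is the genuine content of the Morrey--Nirenberg analytic hypoellipticity theorem and requires the analytic (as opposed to merely $C^\infty$) symbol calculus, with its careful control of the factorial growth of the symbol seminorms. It is tempting to avoid this by the adjugate trick: applying ${\rm adj}(B)$ to $Bv=0$ and using ${\rm adj}(B)\,B={\rm Det}(B)\,I$ to obtain a scalar elliptic equation ${\rm Det}(B)\,v_k=0$ for each component. This shortcut does not quite work, because for matrices of noncommuting differential operators the adjugate identity holds only modulo a remainder $\mR$ of order strictly below that of ${\rm Det}(B)$, so one is left with ${\rm Det}(B)\,v_k=-(\mR v)_k$ rather than a homogeneous equation; the scalar theorem does not apply directly and one is forced back to the microlocal (systems) statement above, where only the top-order part---through ${\rm Char}(B)=\emptyset$---is used. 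Either way, the empty characteristic variety of $B=\mA_A^t\mA_A$ is exactly what the ellipticity hypothesis supplies, and it delivers the analyticity of $v$.
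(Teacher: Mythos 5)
Your argument is correct, but it is not the paper's: the paper's proof is precisely the ``adjugate trick'' that you describe and then set aside. It applies the cofactor matrix ${}^{co}(\mA_A^t\mA_A)$ to $(\mA_A^t\mA_A)v=0$, asserts the exact identity ${}^{co}(\mA_A^t\mA_A)(\mA_A^t\mA_A)={\rm Diag}\big({\rm det}(\mA_A^t\mA_A),\ldots\big)$, and then applies the scalar microlocal inclusion $WF_A(v_j)\subset{\rm Char}\,P\cup WF_A(Pv_j)$ of \cite[Theorem 6.1]{H-I-SP-83} to the scalar operator $P={\rm det}(\mA_A^t\mA_A)$, whose characteristic set is empty by ellipticity. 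Your objection to that shortcut is well taken: for variable (hence non-commuting) operator entries the adjugate identity holds only modulo a matrix $\mR$ of order strictly less than ${\rm ord}(P)$, so one really obtains $Pv_k=-(\mR v)_k$, and a naive application of the scalar theorem then yields only the circular inclusion $WF_A(v_k)\subset\bigcup_j WF_A(v_j)$. Your route --- reduce to the determined system $Bv=0$ with $B=\mA_A^t\mA_A$ (whose coefficients are indeed analytic) and quote analytic hypoellipticity of determined elliptic systems with analytic coefficients, i.e.\ the Morrey--Nirenberg theorem or equivalently the analytic parametrix and the systems version of the wave-front inclusion --- avoids this issue entirely; what it costs is having to invoke the systems-level analytic symbol calculus rather than the scalar statement. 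The paper's reduction can also be repaired, either by noting that the wave-front inclusion holds for square systems with non-vanishing symbol determinant, or by absorbing the lower-order remainder $\mR$ into the inductive derivative estimates that establish analyticity; but as displayed the cofactor identity is exact only at the level of principal symbols, and your paragraph identifying this is the most valuable part of the proposal.
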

\begin{proof}
  Since $\mA_Av=0$, we also have that
  \begin{displaymath}
   ^{co}(\mA_A^t\mA_A) (\mA_A^t\mA_A) v = {\rm Diag}\big({\rm det} (\mA_A^t \mA_A),\ldots,{\rm det} (\mA_A^t \mA_A) \big) v =0,
\end{displaymath}
where $^{co}(\mA_A^t\mA_A)$ is the matrix of co-factors of $\mA_A^t\mA_A$. Since $\mA_A^t \mA_A$ is elliptic (a consequence of the fact that the leading term in $\mA_A(x,\xi)$ is of full rank $J+M$ for all $(x,\xi)\in\bar X\times\Sm^{n-1}$), then $P={\rm det} (\mA_A^t \mA_A)$ is an elliptic operator such that ${\rm Char}P=\emptyset$, where the characteristic set of $P$ is $\{(x,\xi)\in \bar X\times \Sm^{n-1},\, P_m(x,\xi)=0\}$ and $P_m$ is the principal part of $P$. We deduce  from \cite[Theorem 6.1]{H-I-SP-83} that $WF_A(v_j)\subset {\rm Char}P \cup WF_A(Pv_j)=\emptyset$ so that each $v_j$ is analytic in $X$.
\end{proof}

From this, we deduce the following unique continuation result:
\begin{theorem}[Holmgren]
\label{thm:holmgren}
Let $\mA_A$ be as in Proposition \ref{prop:WFanalytic} and let us assume that $\mA_A v=0$ in $X$. Then we have:

(i) Assume that $v=0$ in an open set $\Omega\subset\subset X$. Then $v=0$ in $X$.

(ii) Assume that $\mD v=0$ on an open set $\Sigma$ of $\partial X$. Then $v=0$ in $X$.
\end{theorem}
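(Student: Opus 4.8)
The plan is to derive the Holmgren unique continuation result as a direct consequence of Proposition \ref{prop:WFanalytic}, which already guarantees that any solution $v$ of $\mA_A v=0$ is real-analytic throughout $X$. The key insight is that analyticity upgrades local vanishing information to global vanishing via the standard identity theorem for real-analytic functions on a connected domain, provided we know $X$ is connected (which I would assume, or argue componentwise). So for part (i), once we know each $v_j$ is analytic in $X$ and $v\equiv 0$ on an open subset $\Omega\subset\subset X$, the identity theorem for real-analytic functions immediately forces $v\equiv 0$ on the connected component of $X$ containing $\Omega$, and hence on all of $X$ if $X$ is connected.

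For part (ii), the situation is slightly more delicate because the hypothesis $\mD v=0$ is imposed only on a boundary patch $\Sigma\subset\partial X$, not on an interior open set. Here $\mD$ denotes the Dirichlet data $(\partial/\partial\nu)^q v_j$ for $0\le q\le\tau-1$, as in \eqref{eq:Dirichlet}. The strategy is to combine the equation $\mA_A v=0$ with the boundary information to conclude that all derivatives of $v$ up to sufficiently high order vanish on $\Sigma$, and then invoke the classical Holmgren theorem (or its analytic-continuation formulation) for the determined elliptic operator $\det(\mA_A^t\mA_A)$. Concretely, since $v$ solves $\mA_A v=0$, it also solves the diagonal system $\det(\mA_A^t\mA_A)\,v_j=0$ as established in the proof of Proposition \ref{prop:WFanalytic}. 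The operator $P=\det(\mA_A^t\mA_A)$ is elliptic of order $2(J+M)\tau$ with analytic coefficients, and the Dirichlet conditions $\mD$ supply Cauchy data for $v_j$ on $\Sigma$. The plan is to reflect or extend $v$ by zero across $\Sigma$, or more cleanly, to apply the Holmgren uniqueness theorem to $P$ with vanishing Cauchy data on the noncharacteristic hypersurface $\Sigma$: since $P$ is elliptic, every hypersurface is noncharacteristic, so Holmgren's theorem yields $v_j\equiv 0$ in a one-sided neighborhood of $\Sigma$ inside $X$. Analyticity of $v_j$ then propagates this vanishing to all of $X$ by the argument of part (i).

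The main obstacle I anticipate is matching the Dirichlet conditions $\mD v=0$ (which provide $\tau$ normal derivatives per component) against the Cauchy data required for the high-order operator $P$ of order $2(J+M)\tau$. A priori, vanishing of only $\tau-1$ normal derivatives is far short of the $2(J+M)\tau-1$ Cauchy data needed to apply Holmgren directly to $P$. The resolution must use the structure of the system: the equation $\mA_A v=0$ together with tangential differentiation along $\Sigma$ and the relations encoded in $\mA_A^t\mA_A$ allow us to recover the missing higher normal derivatives algebraically from the given Dirichlet data. This bootstrapping—showing that $\mD v=0$ plus $\mA_A v=0$ forces the full Cauchy data of $P v_j$ to vanish on $\Sigma$—is the genuinely technical step, and it is precisely where the redundancy and ellipticity of the system must be exploited. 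I would carry this out by working in boundary normal coordinates near a point of $\Sigma$, using $\mA_A v=0$ to express normal derivatives of order $\ge\tau$ in terms of tangential derivatives of lower-order normal data, and then closing the induction. Once the Cauchy data for $P$ is shown to vanish on the noncharacteristic $\Sigma$, the classical analytic Holmgren theorem applies and the rest follows by analytic continuation as in part (i).
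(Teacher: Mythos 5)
Your part (i) is exactly the paper's argument: Proposition \ref{prop:WFanalytic} gives analyticity of each $v_j$, and the identity theorem for real-analytic functions on a connected domain finishes it. For part (ii) you have correctly isolated the one genuinely technical step, namely that the Dirichlet data $\mD v=0$ on $\Sigma$ together with the equation $\mA_A v=0$ force \emph{all} normal derivatives of $v$ to vanish on $\Sigma$; the paper dispatches this in one sentence by observing that the (injective, hence non-characteristic in every direction) principal symbol lets one solve the system for the highest normal derivatives in terms of lower-order and tangential data, which is precisely your boundary-normal-coordinates bootstrap. Where you diverge is the concluding step: the paper, after the bootstrap, simply extends $v$ by zero across $\Sigma\cap V$ into a small ball $V$, checks that $\mA_A v=0$ holds in all of $V$ (only the pointwise validity on $\Sigma\cap V$ needs verification, and it holds because all derivatives vanish there), and then reapplies Proposition \ref{prop:WFanalytic} plus part (i) to the extended function, which now vanishes on the open set $V\setminus\bar X$. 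This avoids ever invoking the classical scalar Holmgren theorem for $P=\det(\mA_A^t\mA_A)$. Your route through $P$ is also valid --- once the bootstrap supplies the full Cauchy data of order $2(J+M)\tau-1$ on the (automatically non-characteristic) surface $\Sigma$, classical Holmgren applies --- but it is slightly heavier machinery for the same conclusion, and note you in fact already mention the extension-by-zero alternative, which is the cleaner and is the paper's choice. Both arguments are correct; the paper's buys economy by recycling the interior analyticity result, while yours makes the role of the classical Holmgren theorem explicit.
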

\begin{proof}
  Let us first prove (i). We know from Proposition \ref{prop:WFanalytic} that the functions $v_j$ are analytic. Since they vanish on an open set, they have to vanish everywhere.
  
  Now (ii) is a standard consequence of (i), which we write in detail for systems. Let $x_0\in \Sigma$ and $V$ a sufficiently small open ball around $x_0$ where the coefficients of $\mA_A$ are analytic and where $\mD v=0$ on $\Sigma\cap V\subset\Sigma$. Since $\mA_A$ is injective, we deduce that $(\partial_n)^{\tau_j} v_j=0$ on $\Sigma\cap V$ as well, as for any higher-order derivative in fact. Let us extend $v$ by $0$ on $V$. Then we verify that $\mA_A v=0$ in $V$ (all that needed verification was that the equation was satisfied point-wise on $\Sigma\cap V$). But now (i) implies that $v=0$ on $V\cup X$.
\end{proof}

The above result shows that $A_A=\mA_A^t\mA$ is injective as well. Since $R_AA_A=I-T_A$ for some left-parametrix $R_A$ and compact operator $T_A$, we deduce that $1$ is not in the spectrum of $T_A$ so that $A_A^{-1}=(I-T_A)^{-1}R_A$. As a consequence, any operator $A$ sufficiently close to $A_A$ is also invertible with a bounded inverse. We  summarize this result as:
\begin{corollary}
\label{cor:generic} Let $\mA=\mA_A+\mA_1$ with $\mA_1$ sufficiently small in operator norm from $\prod_i W_p^{l}(X)$ to $\prod_j W_p^{l+\tau}(X)$. Then $\mA$ augmented with $\mD v=0$ is injective and $\mA^t\mA$ augmented with the same boundary conditions is invertible with bounded inverse. In other words, \eqref{eq:stabnormal} holds with $C_2=0$.
\end{corollary}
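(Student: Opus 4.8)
The plan is to leverage the stability of invertibility under small perturbations, using the fact that $A_A$ has already been shown to be invertible. The corollary asserts that a sufficiently small additive perturbation $\mA_1$ preserves both the injectivity of $\mA$ (augmented with $\mD v = 0$) and the invertibility of the normal operator $\mA^t\mA$. The key structural observation, established in the discussion preceding the statement, is that $A_A = \mA_A^t\mA_A$ is invertible with bounded inverse $A_A^{-1} = (I-T_A)^{-1}R_A$, since Theorem~\ref{thm:holmgren} forces its kernel to be trivial and hence $1 \notin \mathrm{spec}(T_A)$. The whole content of the corollary is therefore an openness statement about the set of invertible operators in the relevant operator-norm topology.

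First I would set up the normal operator for the perturbed system. Writing $\mA = \mA_A + \mA_1$ and forming $\mA^t\mA$, I would expand
\begin{equation}
\label{eq:expand}
\mA^t\mA = \mA_A^t\mA_A + \mA_A^t\mA_1 + \mA_1^t\mA_A + \mA_1^t\mA_1 = A_A + E,
\end{equation}
where $E := \mA_A^t\mA_1 + \mA_1^t\mA_A + \mA_1^t\mA_1$ collects the perturbative terms. The crucial point is that each summand in $E$ is bounded in operator norm from $\mU(p,l)$ to its dual in a way controlled by $\|\mA_1\|$: since $\mA_A$ and its adjoint are bounded between the fixed Sobolev scales, and $\mA_1$ is assumed small from $\prod_i W_p^l(X)$ to $\prod_j W_p^{l+\tau}(X)$, we get $\|E\| \leq C(\|\mA_A\|\,\|\mA_1\| + \|\mA_1\|^2)$. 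Thus $E \to 0$ as $\|\mA_1\|\to 0$, and the full operator $N = (\mA^t\mA,\mD)$ is a small perturbation of $N_A = (A_A,\mD)$ in the relevant norm.

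Next I would invoke that the invertible elements of a Banach algebra (equivalently, of the bounded operators between the two Sobolev spaces) form an open set: if $A_A^{-1}$ is bounded and $\|E\| < \|A_A^{-1}\|^{-1}$, then $A_A + E = A_A(I + A_A^{-1}E)$ is invertible via the Neumann series $(I+A_A^{-1}E)^{-1} = \sum_{k\geq0}(-A_A^{-1}E)^k$. This yields invertibility of $\mA^t\mA$ augmented with $\mD v = 0$, which is precisely the statement that \eqref{eq:stabnormal} holds with $C_2 = 0$. Injectivity of $\mA$ itself then follows from Lemma~\ref{lem:Normal}: any $v$ with $\mA v = 0$ and $\mD v = 0$ satisfies $\mA^t\mA v = 0$ with $\mD v = 0$, and invertibility of the normal operator forces $v = 0$.

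The main obstacle, and the point requiring the most care, is ensuring that the perturbation $\mA_1$ acts consistently within the Douglis–Nirenberg framework so that \eqref{eq:expand} lands in the correct function spaces and $E$ is genuinely small in the operator norm between $\mU(p,l)$ and $\mR(p,l)$. Because $\mA_A$ maps $W_p^{l+\tau}$ to $W_p^{l}$ componentwise and $\mA_A^t$ maps back with the same order $\tau$, one must check that $\mA_A^t\mA_1$ and $\mA_1^t\mA_A$ respect these mapping properties rather than losing derivatives; this is where the normalization $\tau = t_j$, $s_i = 0$ (assumed throughout the section) is essential, since it guarantees all entries have matched orders and the adjoint has order exactly $\tau$. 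Once this bookkeeping is in place, the rest is the standard Neumann-series argument, and no genuinely new analytic difficulty arises beyond the unique-continuation input already supplied by Theorem~\ref{thm:holmgren}.
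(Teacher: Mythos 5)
Your argument is correct and follows essentially the same route as the paper: the paper likewise observes that $A_A=(\mA_A^t\mA_A,\mD)$ is injective by Theorem \ref{thm:holmgren}, hence invertible with bounded inverse $(I-T_A)^{-1}R_A$, and then concludes by openness of the set of invertible operators under small perturbations. Your explicit expansion $\mA^t\mA=A_A+E$ with the Neumann series, and the reduction of injectivity of $\mA$ to invertibility of the normal operator, simply spell out details the paper leaves implicit.
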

The invertibility of $\mA^t\mA$ is therefore {\em generic}, i.e., holds for an open and dense set of coefficients in the definition of the elliptic operator $\mA^t\mA$; see \cite{SU-JFA-09}. Note, however, that the size of norm of $\mA_1$ for which $\mA$ is invertible depends on $\mA_A$.

\medskip

Another similar result states that the invertibility of $\mA^t\mA$ is guaranteed when $X$ is a sufficiently small domain.
\begin{theorem}
\label{thm:small} Let us assume that $\mA(0,D)$ is an elliptic operator. Then for $X$ sufficiently small, we have that $\mA$ is injective and that $\mA^t\mA$ is invertible when augmented with Dirichlet conditions $\mD v=0$. As a consequence, \eqref{eq:stabnormal} holds with $C_2=0$.
\end{theorem}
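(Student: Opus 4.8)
The plan is to reduce Theorem \ref{thm:small} to the constant-coefficient situation, where the operator $\mA(0,D)$ is elliptic, and then use a perturbation/scaling argument to control the variable-coefficient corrections on a small domain. First I would freeze coefficients at the origin (after translating so that $0\in X$) and write $\mA(x,D)=\mA(0,D)+(\mA(x,D)-\mA(0,D))$, treating the second piece as a small perturbation. The key observation is that on a small ball $X=B_\rho$, the coefficient variation $\mA(x,D)-\mA(0,D)$ is small in the relevant operator norm because the coefficients are (at least) continuous and hence uniformly close to their values at $0$ on a small enough ball. This suggests applying Corollary \ref{cor:generic} with $\mA_A=\mA(0,D)$ and $\mA_1=\mA(x,D)-\mA(0,D)$, once we check that the constant-coefficient operator $\mA(0,D)^t\mA(0,D)$ augmented with $\mD v=0$ is injective.

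The crux is therefore to establish injectivity for the constant-coefficient problem $\mA(0,D)v=0$ in $B_\rho$ with $\mD v=0$ on $\partial B_\rho$ (equivalently, via Lemma \ref{lem:Normal}, $\mA(0,D)^t\mA(0,D)v=0$ with the Dirichlet data). Here I would exploit the scaling structure: under the dilation $x\mapsto \rho x$ the homogeneous constant-coefficient principal operator $\mA_0(0,D)$ maps solutions on $B_\rho$ to solutions on the unit ball $B_1$, so injectivity is in fact \emph{scale-invariant} at the level of the principal part. The real work is that $\mA(0,D)$ contains lower-order terms; these scale with positive powers of $\rho$ and become negligible as $\rho\to 0$. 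Thus I would argue that on $B_1$ the purely principal constant-coefficient operator $\mA_0(0,D)$ with the Dirichlet conditions $\mD v=0$ is injective (this is where a Holmgren-type or direct energy argument enters: $\mA_0(0,D)$ has analytic, indeed constant, coefficients, and $\mA_0(0,D)^t\mA_0(0,D)$ is elliptic with $\mathrm{Char}=\emptyset$, so Theorem \ref{thm:holmgren}(ii) applies and forces $v\equiv0$), and then upgrade to the full operator via the smallness of the rescaled lower-order terms.

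Concretely, the steps in order are: (1) translate so $0\in X$ and take $X=B_\rho$; (2) rescale to the unit ball, converting $\mA(0,D)$ on $B_\rho$ into $\mA_0(0,D)+O(\rho)$ on $B_1$, where the $O(\rho)$ collects all lower-order contributions after dilation; (3) prove injectivity of $\mA_0(0,D)$ with $\mD v=0$ on $B_1$ using Theorem \ref{thm:holmgren}(ii), which is legitimate since constant coefficients are analytic and the normal operator is elliptic; (4) invoke Corollary \ref{cor:generic} on $B_1$ with $\mA_A=\mA_0(0,D)$ and $\mA_1=O(\rho)$ small, concluding that for $\rho$ small the full rescaled operator is injective and its normal operator invertible; (5) undo the scaling to conclude the same for $\mA$ on $B_\rho$, which yields \eqref{eq:stabnormal} with $C_2=0$.

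The main obstacle I anticipate is step (2)–(4), namely making the perturbation argument uniform and genuinely quantitative. The issue flagged in Corollary \ref{cor:generic} is that the admissible size of $\mA_1$ depends on $\mA_A$; after rescaling, $\mA_A=\mA_0(0,D)$ is \emph{fixed} (independent of $\rho$), so its parametrix norm is a fixed constant, and the perturbation $\mA_1=O(\rho)$ can be made smaller than any prescribed threshold by shrinking $\rho$. This is precisely why the rescaling to a $\rho$-independent reference operator is essential: it decouples the ``allowed perturbation size'' from the domain size. The delicate point is verifying that after dilation every lower-order coefficient of $\mA(0,D)$ indeed contributes a factor of at least $\rho$ in the operator norm from $\prod_i W_p^{l}(B_1)$ to $\prod_j W_p^{l+\tau}(B_1)$, and that the Dirichlet data $\mD v=0$ transform covariantly under the scaling so that Corollary \ref{cor:generic} applies verbatim on $B_1$.
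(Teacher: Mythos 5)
Your proposal is correct, but it reaches the conclusion by a genuinely different mechanism than the paper. Both arguments freeze the coefficients at the origin and treat $\mA(x,D)-\mA(0,D)$ as a small perturbation on a small domain, and both must confront the same central difficulty that you correctly identify: the admissible size of the perturbation must not degenerate as the domain shrinks. You resolve this by dilating $B_\rho$ to the unit ball, observing that the homogeneous constant-coefficient principal part $\mA_0(0,D)$ is scale-invariant while all lower-order and variable-coefficient contributions acquire positive powers of $\rho$; this produces a $\rho$-independent reference operator to which Theorem \ref{thm:holmgren} and then Corollary \ref{cor:generic} apply on the fixed domain $B_1$. The paper instead works directly on $X$ without rescaling: it uses the Dirichlet data $\mD u=0$ to extend $u$ by zero to a fixed ambient set $Y\supset\bar X$, and then convolves with a fundamental solution $E$ of the constant-coefficient normal operator $P^*P$ to obtain the coercivity estimate $\|Pu\|_{L^2(X)}\geq C\|u\|_{H^m(X)}$ with $C$ \emph{independent of $X$}; the variable-coefficient correction is then absorbed by a direct triangle-inequality perturbation of this a priori estimate. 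The paper's route is more quantitative --- it yields an explicit handle on how small $X$ must be in terms of $\|E*P^*\|_{\mathcal L(L^2(X),H^m(Y))}$, as remarked after the theorem statement --- and it keeps the full frozen operator $\mA(0,D)$ (lower-order terms included) as the reference, so no scaling bookkeeping is needed. Your route is more self-contained relative to the machinery already set up in the paper (it reuses Corollary \ref{cor:generic} rather than introducing fundamental solutions), but it is correspondingly less explicit, since Corollary \ref{cor:generic} rests on the parametrix and the Fredholm alternative, and the ``delicate point'' you flag --- checking that every lower-order and coefficient-variation term contributes at least a factor $\rho$ (or a modulus of continuity $\omega(\rho)$ for the top-order variation) in the operator norm from $\prod_j W_p^{l+\tau}(B_1)$ to $\prod_i W_p^{l}(B_1)$ --- does need to be carried out, though it is routine under the smoothness assumptions in force. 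Both arguments then conclude identically: injectivity of $\mA$ on the kernel of $\mD$ plus Lemma \ref{lem:Normal} and the Fredholm property give invertibility of the normal problem, hence \eqref{eq:stabnormal} with $C_2=0$.
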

 As in the previous corollary, the size of the domain $X$ depends on the bound for $(\mA(0,D)^t\mA(0,D))^{-1}$ with Dirichlet conditions on $\partial X$, which is independent of $X$ as we shall see. It is therefore possible to estimate the size of the domain $X$ for which the above theory applies; see also \cite{BM-LINUMOT-13} for a similar result with a different method of proof. 
\begin{proof}
 We assume that all coefficients of $\mA$ are sufficiently smooth and that $0\in X$. Let $P=\mA(0,D)$ be the operator of order $m=\tau$ with coefficients frozen at $x_0=0$. Then $P^tP$ with Dirichlet conditions on $\partial X$  is invertible as an application of Theorem \ref{thm:holmgren}. For constant coefficients, a more precise theory applies. Let $u\in H^m(X)$ (we restrict ourselves to Hilbert spaces to simplify notation) be such that $u=\partial^j_\nu u=0$ for $1\leq j\leq m-1$, i.e., $\mD u=0$. We assume $\partial X$ sufficiently smooth that we can extend $u$ by $0$ outside of $X$ and obtain a function in $H^m(\Rm^n)$ with the same norm. Let $Y$ be an open set such that $\bar X\subset Y\subset\Rm^n$. 
 
 The elliptic theory for constant coefficient operators \cite{H-I-SP-83} provides the existence of a fundamental solution $E$ such that
\begin{displaymath}
  E * P^*Pu = u .
\end{displaymath}
This implies that 
\begin{displaymath}
   \|u\|_{H^{m}(X)} = \|u\|_{H^{m}(Y)} = \|E * P^*Pu\|_{H^{m}(Y)} \leq\|E*P^*\|_{{\mathcal L}(L^2(X),H^m(Y))} \|Pu\|_{L^2(X)}.
\end{displaymath}
This proves the existence of a constant $C$ independent of $X$ such that 
\begin{displaymath}
   \|Pu\|_{L^2(X)} \geq C  \|u\|_{H^{m}(X)}.
\end{displaymath}
This holds for any operator with constant coefficients. 

Now let $Q$ be an operator of order at most $m$ and of small norm $\eps$ from $L^2(X)$ to $H^m(X)$. Then
\begin{displaymath}
 \|(P+Q)u\|_{L^2(X)} \geq (C -\eps) \|u\|_{H^m(X)}\geq C_1 \|u\|_{H^m(X)},
\end{displaymath}
which implies that $P+Q$ is injective for $\eps$ sufficiently small.

This is applied as follows. Let $P=\mA(x,D)$ be an elliptic operator with not-necessarily constant coefficients and define $P_0=\mA(0,D)$ the operator with coefficients frozen at $x_0=0$. Then $P-P_0$ is an operator bounded from $H^m(X_\eps)$ to $L^2(X_\eps)$ with bound $C_1\eps$ for $C_1$ independent of $\eps$ and $X_\eps\subset B(x_0,\eps)$. From the above, we deduce that
\begin{displaymath}
  \|P_0 u\|_{L^2(X_\eps)}\geq C \|u\|_{H^m(X)}
\end{displaymath}
for all function $u\in H^m(X)$ such that $\mD u=0$. For $C_1\eps<C$, we find that
\begin{displaymath}
  \|P u\|_{L^2(X_\eps)}\geq C_2 \|u\|_{H^m(X)}
\end{displaymath}
for some $C_2>0$. This proves that $P=\mA$ is injective on sufficiently small domains.
\end{proof}

%
\subsection{Unique continuation principle}
\label{sec:ucpc}
%
When the domain $X$ is large or when the operator $\mA$ is not sufficiently close to an operator $\mA_A$ with analytic coefficients, then the unique continuation results must rely on an other principle than analyticity. 

Unique continuation in the absence of analyticity in the coefficients is not always guaranteed, even for scalar elliptic operators, although the construction of counter-examples is not straightforward. For general references on unique continuation principles (UCP) for mostly scalar, not necessarily elliptic, operators, as well as counter-examples, see \cite{H-III-SP-94,N-CPAM-57,N-AMS-73,Z-Birk-83} and their references.

In this section, we revisit Calder\'on's \cite{C-AJM-58,C-PSFD-62} result of uniqueness from Cauchy boundary data closely following the presentation in \cite{N-AMS-73} and extend it to redundant systems. Our objective is to adapt \cite[Theorem 5]{N-AMS-73} to specific settings of redundant systems. We first present local uniqueness results, whose proofs are postponed to the appendix, and then use classical arguments to extend them to global uniqueness results. 

\subsubsection{Local uniqueness result}
\label{sec:lur}

Let $x$ be a point in $\Rm^{n+1}$ and $N$ be a unit vector equal to $(0,\ldots,0,1)$ in an appropriate system of coordinates. We want to address the local uniqueness of the Cauchy problem. Assume that $L_q$ for $1\leq q\leq Q$ are differential operators of order $m$ and that
\begin{equation}
\label{eq:redundsyst}
 L_q u =0 \mbox{ in } V\cap \{x_{n+1}>0\},\qquad \partial^j_{n+1} u=0 \mbox{ on } V\cap \{x_{n+1}=0\},
\end{equation}
for $1\leq q\leq Q$ and $1\leq j\leq m-1$, 
where $V$ is a neighborhood of $0$. We assume that $N$ is non characteristic (at $x$) for all $L_q$, i.e., $p_q(x,N)\not=0$ with $p_q$ the principal symbol of $L_q$. This and \eqref{eq:redundsyst} imply that all derivatives of $u$ vanish on $\{x_{n+1}=0\}$ and that $u$ can be extended by $0$ on $V\cap \{x_{n+1}<0\}$. The uniqueness problem may therefore be recast as: if $u$ satisfies
\begin{equation}
\label{eq:UCPsyst}
 L_q u =0 \mbox{ in } V \quad \mbox{ for } 1\leq q\leq Q\qquad \mbox{ and } \qquad u=0 \mbox{ in } \{x_{n+1}<0\},
\end{equation}
then $u\equiv0$ in a full neighborhood of $0$. 

When $Q=1$, it is known that $L=L_1$ needs to satisfy several restrictive assumptions in order for the result to hold; see \cite{N-AMS-73}. The main advantage of the redundancy in the above system is that such assumptions need to be valid only locally (in the Fourier variable $\xi$) for each operator, and globally collectively.

Changing notation, we define $t=x_{n+1}$ and still call $x=(x_1,\ldots,x_n)$. Then $p_q=p_q(x,t,\xi,\tau)$ is the principal symbol of $L_q$ for this choice of coordinates. Sufficient conditions for the uniqueness to the above Cauchy problem involve the properties of the roots $\tau$ of the above polynomials $\tau\mapsto p_q(\cdot,\tau)$ as a function of $\xi$. In the setting of redundant measurements, these conditions may hold for different values of $q$ for different values of $\xi$. This justifies the following definition of our assumptions. 

We assume the existence of a finite covering $\{\Omega_\nu\}$  of the unit sphere $\Sm^{n-1}$ (corresponding to $|\xi|=1$) such that the following holds. For each $\nu$, there exists $q=q(\nu)$ and $\eps>0$ such that for each $(x,t)$ close to $0$ and each $\xi\in\Omega_\nu$, we have:
\begin{equation*}
\label{eq:condCalderon}
\begin{array}{rl}
(i) & p_q(x,t,\xi,\tau) \mbox{  has at most simple real roots $\tau$ and at most double complex roots} \\
(ii) & \mbox{distinct roots $\tau_1$ and $\tau_2$ satisfy } |\tau_1-\tau_2|\geq\eps>0\\
(iii) & \mbox{non-real roots $\tau$ satisfy } |\Im \tau|\geq\eps.
\end{array}
\end{equation*}
Then we have the following result:
\begin{theorem}
\label{thm:calderonsystem}[Calder\'on's result for redundant systems.]
Assume that $N$ is non characteristic for the operators $L_q$ at the origin and that for a finite covering $\{\Omega_\nu\}$ of the unit sphere, (i)-(ii)-(iii) above are satisfied.   Then \eqref{eq:UCPsyst} implies that $u=0$ in a full neighborhood of $0$. 
\end{theorem}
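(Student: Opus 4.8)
The plan is to reduce the redundant-system Cauchy uniqueness problem to the classical scalar Calder\'on theorem, as presented in \cite[Theorem 5]{N-AMS-73}, by exploiting a partition-of-unity argument in the Fourier variable $\xi$. The heart of Calder\'on's method is a Carleman-type a priori estimate obtained after freezing coefficients at the origin and reducing the $m$-th order operator to a first-order pseudodifferential system in $t$; the key point is that the estimate is built by localizing in cones of $\xi$-space and using the separation (conditions (ii)--(iii)) of the roots $\tau_j(x,t,\xi)$ to construct a diagonalizing symbol and a symmetrizer. Since for each $\nu$ only \emph{one} operator $L_{q(\nu)}$ need satisfy the root conditions on the cone $\Omega_\nu$, the natural strategy is to assemble a single Carleman estimate for $u$ by patching together the estimates coming from the various $L_{q(\nu)}$, one per piece of the covering.

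Concretely, first I would introduce a smooth homogeneous-of-degree-zero partition of unity $\{\chi_\nu(\xi)\}$ subordinate to the open covering $\{\Omega_\nu\}$ of $\Sm^{n-1}$, and the corresponding pseudodifferential cutoffs $\Xi_\nu=\chi_\nu(D_x)$, so that $\sum_\nu \Xi_\nu = I$ modulo a smoothing operator. For each $\nu$, the frequency-localized piece $\Xi_\nu u$ is, up to lower-order and smoothing errors, governed by the single operator $L_{q(\nu)}$ on the relevant cone, where by hypothesis the principal symbol $p_{q(\nu)}(x,t,\xi,\tau)$ has simple real roots, at most double complex roots, and the quantitative separations (ii)--(iii). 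On that cone I would run exactly the scalar argument of \cite{N-AMS-73}: after freezing coefficients and factoring $p_{q(\nu)}$ into first-order factors $(\tau-\tau_k)$, the separation of roots lets one build a symmetrizer and derive a weighted (Carleman) estimate
\begin{equation}
\label{eq:carlemanpiece}
\tau_0 \| e^{\tau_0 t^2} \Xi_\nu u\|^2 \leq C \| e^{\tau_0 t^2} L_{q(\nu)} \Xi_\nu u \|^2 + (\text{lower order}),
\end{equation}
valid for the weight parameter $\tau_0$ large. Summing \eqref{eq:carlemanpiece} over the finite covering, absorbing the commutator $[L_{q(\nu)},\Xi_\nu]$ and the smoothing remainders into the left-hand side for $\tau_0$ large enough, and using $L_q u = 0$ for every $q$, yields a global Carleman estimate forcing $e^{\tau_0 t^2} u$ to vanish near $0$; combined with $u\equiv 0$ in $\{t<0\}$ and the non-characteristic condition $p_q(x,N)\neq0$, this gives $u\equiv 0$ in a full neighborhood.

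The main obstacle I expect is the bookkeeping of \emph{errors} generated by the frequency localization. Because the root conditions hold for different operators on different cones, one cannot work with a single operator globally: the cutoffs $\Xi_\nu$ do not commute with the variable-coefficient operators $L_q$, and the commutators $[L_{q(\nu)},\Xi_\nu]$ are of order $m-1$ after the cutoff, which is borderline with respect to the gain supplied by the Carleman weight. The delicate step is to check that all such commutator and cutoff-overlap terms are genuinely lower order relative to the weighted norm, so that they can be absorbed by choosing $\tau_0$ sufficiently large; this requires that the separation constant $\eps$ in (ii)--(iii) be uniform over the (finite) covering and that the symbols $\chi_\nu$ be chosen with overlaps controlled. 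Once the uniformity of the estimate across the finitely many pieces is established, the summation and absorption are routine, and the conclusion follows by the standard convexification/continuation argument as in \cite{N-AMS-73}.
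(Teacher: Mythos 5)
Your strategy is essentially the one the paper follows in the proof of Theorem \ref{thm:calderonsystem:app}: reduce the statement to a Carleman-type inequality with a convexified weight, localize in the Fourier variable $\xi$ by a partition of unity subordinate to $\{\Omega_\nu\}$, and on each cone run Calder\'on's diagonalization argument for the single designated operator $L_{q(\nu)}$, the redundancy entering only through the sum $\sum_{q}|||L_qu|||^2$ on the right-hand side. The one substantive difference is \emph{where} the localization is performed, and that is exactly where your acknowledged ``delicate step'' lives. You cut off $u$ itself and then apply $L_{q(\nu)}$, so the error is $[L_{q(\nu)},\Xi_\nu]u$, of order $m-1$; the estimate you display, whose left-hand side controls only $\tau_0\|e^{\tau_0t^2}\Xi_\nu u\|^2$, is too weak to absorb such a term, and you would need the full-strength Calder\'on inequality controlling $\sum_{|\alpha|<m}|||D^\alpha u|||^2$ as in \eqref{eq:carleman:app}. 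The paper sidesteps this borderline bookkeeping by first converting each $L_qu=0$ into the first-order system $(D_t+H_q)U=f_q$ for $U=(\Lambda^{m-j}D_t^{j-1}u)_j$ and only then applying the zero-order cutoffs $\Phi_\nu$ (with $\sum_\nu\phi_\nu^2=1$): the commutators $[\Phi_\nu,H_q]$ are then of order $0$, a full order below the principal part, and are absorbed into the remainder $C\int_0^Tw(t)\|U\|_{-1}^2\,dt$ of \eqref{eq:carleman2:app} for $k^{-1}+T^2$ small. With that ordering (and the weight $e^{k(T-t)^2}$ of \cite{N-AMS-73}, though that choice is inessential) your outline closes; note also that no freezing of coefficients is required, since the reduction of $h_{q(\nu)}$ to Jordan form is carried out with variable coefficients, and that conditions (i)--(iii) are used precisely to guarantee that the Jordan blocks are at most $2\times2$ with uniformly separated eigenvalues, so that Nirenberg's Lemma~2 applies blockwise.
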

The proof, which  closely follows that of Theorem 5 in \cite{N-AMS-73}, is presented in the appendix.

In the above theorem, $u$ is scalar. The above proof extends to vector-valued functions $u$ when the system for $u$ is diagonally dominant. For a determined system given by a matrix $L_{ij}$ of operators of order $m\geq1$, this means that the operators $L_{ii}$ are of order $m$ and satisfy the hypotheses of the above theorem and the operators $L_{ij}$ for $i\not=j$ are of order at most $m-1$.  This generalizes to the setting of redundant systems $L_{ij}u_j=S_i$ where for each $i$, only one operator $L_{ij}$ is of order $m$ and for each $j$, the operators $L_{kj}$ of order $m$ collectively satisfy the hypotheses of the above theorem; see Theorem \ref{thm:calderonsystemred:app} in the appendix, which we do not reproduce here.

The above theorem may not apply directly in applications. However, it serves as a component to obtain more general results. We consider one such result that finds applications in the framework of power density measurements. 

We consider a setting where the leading term in the system may not be diagonal but rather upper-triangular. Unique continuation properties may still be valid provided that (a sufficiently large number of) the diagonal operators are elliptic. However, the corresponding complex roots may no longer be double. Instead, we need the stronger condition for some $\eps>0$:
\begin{equation*}
\label{eq:condCalderon2}
\begin{array}{rl}
(iv) & p_q(x,t,\xi,\tau) \mbox{ has at most simple roots $\tau$} \mbox{ that satisfy } |\Im \tau|\geq\eps.
\end{array}
\end{equation*}

Then we have the following result.
\begin{theorem}
\label{thm:2by2system}
Consider the redundant system of equations
\begin{equation}
\label{eq:2by2syst}
\left(\begin{matrix} L_1 & L_0 \\ L_3 & L_2 \end{matrix} \right) 
\left(\begin{matrix} u_1 \\ u_2\end{matrix} \right) = 0\mbox{ in } V\cap \{x_{n+1}>0\},\qquad \partial^j_{n+1} u_k=0 \mbox{ on } V\cap \{x_{n+1}=0\},
\end{equation}
for $1\leq j\leq m-1$ and $1\leq k\leq 2$, with the following assumptions. The operators $L_1$ and $L_0$ are (vector-valued) $Q_1\times1$ operators of order $m$,  where $L_1$ satisfies the hypotheses of Theorem \ref{thm:calderonsystem} with $Q=Q_1$. The operators $L_2$ and $L_3$ are $Q_2\times1$ operators of order $m$ and at most $m-1$, respectively. Moreover, $L_2$ satisfies the ellipticity hypothesis (i)-(ii)-(iv) with $Q=Q_2$.
Then $(u_1,u_2)=0$ in a full neighborhood of $0$.

The same result extends to systems of the form
\begin{equation}
\label{eq:RbyRsyst}
\left(L_{ij} \right)_{1\leq i,j\leq R} u = 0\mbox{ in } V\cap \{x_{n+1}>0\},\qquad \partial^j_{n+1} u_k=0 \mbox{ on } V\cap \{x_{n+1}=0\},
\end{equation}
for $1\leq k\leq R$, where $L_{ij}$ is (a vector-valued operator) of order $m-1$ for $i>j$, $L_{ii}$  is (a vector-valued operator) of order $m$ that satisfies the hypotheses of Theorem \ref{thm:calderonsystem} with an appropriate value of $Q$ when all $L_{ik}$ are of order $m-1$ for $k\not=i$ and the ellipticity hypothesis (i)-(ii)-(iv)  with an appropriate value of $Q$ when at least one operator $L_{ik}$ for $k<i$ is of order $m$.
\end{theorem}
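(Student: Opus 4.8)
The plan is to reduce Theorem \ref{thm:2by2system} to the already-established scalar result of Theorem \ref{thm:calderonsystem} by exploiting the triangular structure to decouple the unknowns one at a time. Starting with the $2\times 2$ block form \eqref{eq:2by2syst}, I would argue as follows. The first block row reads $L_1 u_1 + L_0 u_2 = 0$, and the second reads $L_3 u_1 + L_2 u_2 = 0$ in $V\cap\{x_{n+1}>0\}$, with all normal derivatives up to order $m-1$ of both $u_1$ and $u_2$ vanishing on $\{x_{n+1}=0\}$, so both may be extended by $0$ to the lower half space. The key idea is to first obtain $u_2\equiv 0$ from the \emph{elliptic} block $L_2$, and then peel off $u_1$ using that $L_1$ satisfies the full hypotheses of Theorem \ref{thm:calderonsystem}.

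The main step is the treatment of the second equation. Since $L_3$ has order at most $m-1$ while $u_1$ has all derivatives up to order $m-1$ vanishing on the boundary, the term $L_3 u_1$ is of lower order relative to the leading part $L_2 u_2$; the strategy is to view $L_3 u_1$ as a lower-order perturbation and apply the scalar Calder\'on argument to $L_2 u_2 = -L_3 u_1$. Here the stronger hypothesis (iv) on $L_2$ — simple roots with a uniform spectral gap $|\Im\tau|\geq\eps$ — is exactly what is needed, because ellipticity of $L_2$ means $N$ is non-characteristic and all roots $\tau$ are non-real, so the Carleman-type estimate underlying Theorem \ref{thm:calderonsystem} absorbs the lower-order coupling. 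This yields $u_2\equiv 0$ in a full neighborhood of $0$. With $u_2=0$, the first equation collapses to $L_1 u_1 = 0$, and since $L_1$ satisfies the hypotheses of Theorem \ref{thm:calderonsystem} with $Q=Q_1$, that theorem gives $u_1\equiv 0$ as well.

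For the general $R\times R$ case \eqref{eq:RbyRsyst} I would proceed by descending induction on the row index, processing the block rows from the last to the first so as to exploit the strict lower-triangularity of the off-diagonal lower-order terms $L_{ij}$ with $i>j$. Concretely, one shows inductively that $u_R, u_{R-1},\ldots$ vanish in turn: at each stage the diagonal operator $L_{ii}$ is of order $m$ while every off-diagonal $L_{ik}$ coupling already-killed or lower-order components contributes at most order $m-1$, so the equation for the current $u_i$ reads $L_{ii} u_i = (\text{lower-order terms in the remaining components})$. One then invokes Theorem \ref{thm:calderonsystem} when the row is purely diagonal-leading (all $L_{ik}$, $k\neq i$, of order $m-1$), and the elliptic variant using hypothesis (iv) when some coupling term $L_{ik}$ with $k<i$ is genuinely of order $m$, in which case that order-$m$ coupling must act on a component already shown to vanish. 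Tracking exactly which couplings are order $m$ and verifying that they always multiply an already-annihilated component is the bookkeeping heart of the argument.

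The hard part will be justifying rigorously that the order-$(m-1)$ coupling terms $L_3 u_1$ (and their analogues $L_{ik}u_k$ in the general case) are genuinely absorbable as lower-order perturbations in the Carleman estimate, rather than interfering with the leading symbol. This hinges on the fact that the underlying proof of Theorem \ref{thm:calderonsystem} (carried out in the appendix following \cite{N-AMS-73}) produces an estimate with room to spare in the order count, so that a term one order below the principal part can be moved to the right-hand side and controlled by the left; one must confirm the uniformity of $\eps$ and of the covering $\{\Omega_\nu\}$ is preserved under this perturbation, and that the extension-by-zero construction remains valid for the coupled system. Since this reuses the machinery of the scalar theorem essentially verbatim, I would state that the proof follows the same lines as Theorem \ref{thm:calderonsystem} and relegate the detailed perturbation bookkeeping to the appendix.
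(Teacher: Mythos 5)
Your proposal breaks down at its very first step, and the failure is structural rather than a matter of bookkeeping. You propose to conclude $u_2\equiv 0$ from the second block row $L_2u_2=-L_3u_1$ by ``viewing $L_3u_1$ as a lower-order perturbation.'' But $L_3u_1$ is a differential expression in the \emph{other} unknown, which at that stage is not known to vanish; what you need is a unique continuation statement for $L_2u_2=f$ with $f$ an unknown, a priori nonzero right-hand side, and no such statement holds. Concretely, the Carleman estimate for $L_2$ gives at best $\sum_{|\alpha|\leq m}|||D^\alpha u_2|||^2\leq C(k^{-1}+T^2)\,|||L_3u_1|||^2$, whose right-hand side involves derivatives of $u_1$ and cannot be absorbed into a left-hand side containing only $u_2$. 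The components cannot be decoupled sequentially; the same objection defeats the descending induction you propose for the $R\times R$ case, since the last row $L_{RR}u_R=-\sum_{j<R}L_{Rj}u_j$ again has an unknown nonzero source.

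The paper's proof instead establishes a single \emph{joint} Carleman estimate $\sum_{k=1}^2\sum_{|\alpha|<m}|||D^\alpha u_k|||^2\leq C(k^{-1}+T^2)\,|||Lu|||^2$ for the pair $(u_1,u_2)$ simultaneously: one adds the Carleman estimate for $L_1$ applied to $u_1$ and the one for $L_2$ applied to $u_2$, bounds $|||L_1u_1|||^2+|||L_2u_2|||^2$ by $C\big(|||Lu|||^2+|||L_0u_2|||^2+|||L_3u_1|||^2\big)$, and absorbs both cross terms using the small prefactor $(k^{-1}+T^2)$. You have also misattributed the role of hypothesis \emph{(iv)}. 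The term $|||L_3u_1|||^2$ is of order at most $m-1$ in $u_1$ and is absorbed directly by the left-hand side of the basic estimate. The genuinely delicate term is $|||L_0u_2|||^2$, which is of \emph{full} order $m$ in $u_2$ and is not controlled by the basic Carleman estimate (which stops at $|\alpha|<m$); hypothesis \emph{(iv)} on $L_2$ is what upgrades the estimate (Theorem \ref{thm:calderonelliptic:app}) to control $\sum_{|\alpha|\leq m}|||D^\alpha u_2|||^2$ by $C(k^{-1}+T^2)|||L_2u_2|||^2$, so that $|||L_0u_2|||^2$ can be fed back through $|||L_2u_2|||^2\leq C\big(|||Lu|||^2+|||L_3u_1|||^2\big)$ and closed. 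Your write-up never addresses the order-$m$ coupling $L_0$ at all, which is precisely the term the ellipticity hypothesis is there to handle.
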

The proof of this theorem is also given in the appendix. Note that {\em (i)-(ii)-(iv)}, as opposed to {\em (i)-(ii)-(iii)}, may be seen as an ellipticity condition since the symbol $p_{q(\nu)}$ does not vanish on $\Omega_\nu$. It is known that Carleman estimates are sharper in such a setting; see, e.g., \cite{LR-JPCS-11,N-AMS-73}. 

\begin{remark}
\label{rem:LC}\rm Let $\mQ$ be an invertible $Q_1\times Q_1$ matrix and define the linear operators $\tilde L_1=\mQ L_1$ and $\tilde L_0=\mQ L_0$. Then the conclusions of Theorem \ref{thm:2by2system} are the same as those obtained when $L_1$ and $L_0$ are replaced by $\tilde L_1$ and $\tilde L_0$. In other words, the results of Theorem \ref{thm:calderonsystem} hold if $L_q$ is replaced by an equivalent linear combination. The condition that $N$ be non characteristic for $L_q$ may then be replaced by the condition that it be non characteristic for the  linear combinations $\tilde L_q=\mQ_{q,q'} L_{q'}$.
\end{remark}

\begin{remark}\label{rem:extensions} \rm
 If $L_0$ is an operator of order $m-1$ in the latter theorem so that the above system is diagonally dominant, then it is sufficient to assume that  $L_1$ and $L_2$ satisfy the less constraining hypotheses of Theorem \ref{thm:calderonsystem}; see Theorem \ref{thm:calderonsystemred:app} and the discussion below Theorem \ref{thm:calderonsystem}.
\end{remark}

%
\subsubsection{Global uniqueness result}
%

The above local results extend to global unique continuation results such as \cite[Sections 8.5-8.6]{H-I-SP-83}:
\begin{theorem}
\label{thm:globalUCP}
 Let $\mA$ be a (redundant) system of operators of order $m$, let $R_{\mA}\subset\{(x,N)\in \bar X\times \Sm^{n-1}\}$ be the set of points where the hypotheses of either Theorem \ref{thm:calderonsystem} or Theorem \ref{thm:2by2system}, (and possibly their extensions in Remark \ref{rem:extensions}) are satisfied and let us define $\Sigma_{\mA}= \bar X\times\Sm^{n-1} \backslash R_{\mA}$. Let
 \begin{displaymath}
 \mA u =0 \quad \mbox{ in } X,\qquad \partial^j_\nu u =0 \quad \mbox{ on } \partial X, \,\, 0\leq j\leq m-1.
\end{displaymath}
Let $\bar N({\rm supp}(u))$ be the subset of  $(\bar x,\xi)\in X\times\Sm^{n-1}$ composed of the closure of the normal set of the support of $\{u_j\}$; see \cite{H-I-SP-83}. Heuristically, that set may be seen as the points $(x,\xi)$ where $x$ belongs to the boundary of ${\rm supp (u)}$ and $\xi$ is a normal (inward or outward) unit vector to that boundary.

Then $\bar N({\rm supp}(u))\subset \Sigma_{\mA}$. When the latter set is empty, then $u\equiv0$ in $X$.
\end{theorem}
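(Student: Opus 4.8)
The plan is to run the classical propagation-of-support argument of \cite[Sections~8.5--8.6]{H-I-SP-83}, feeding in the local Cauchy uniqueness supplied by Theorems~\ref{thm:calderonsystem} and~\ref{thm:2by2system}. The guiding principle is the contrapositive: if $(x_0,N_0)$ lies in the normal set $N(\mathrm{supp}\,u)$ and the local uniqueness across a hypersurface with conormal $N_0$ holds at $x_0$ (that is, $(x_0,N_0)\in R_{\mA}$), then $u$ must vanish in a full neighborhood of $x_0$, contradicting $x_0\in\partial(\mathrm{supp}\,u)$. Hence the normal set must avoid $R_{\mA}$, i.e.\ be contained in $\Sigma_{\mA}$.

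First I would fix $(x_0,N_0)\in N(\mathrm{supp}\,u)$ and unwind the definition of the normal set: there is an open ball $B$ with $x_0\in\partial B$, $B\cap\mathrm{supp}(u)=\emptyset$, and outward unit normal $N_0$ at $x_0$, so that $u\equiv 0$ on $B$. Assuming $(x_0,N_0)\in R_{\mA}$, the direction $N_0$ is non-characteristic, so $\partial B$ is a smooth non-characteristic hypersurface through $x_0$. I would then flatten $\partial B$ by a local diffeomorphism sending $x_0\mapsto 0$, $N_0\mapsto e_{n+1}$, and $B\mapsto\{x_{n+1}<0\}$; in the new coordinates $u$ vanishes in $V\cap\{x_{n+1}<0\}$ while $\mA u=0$ in $V$. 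The non-characteristic condition and the root conditions (i)-(ii)-(iii) (respectively (i)-(ii)-(iv) in the upper-triangular case) are invariantly attached to the pair (hypersurface, conormal), and since they hold at $(x_0,N_0)$ with a definite $\eps>0$, they persist for all $(x,t)$ near $0$ after flattening. Whichever of Theorem~\ref{thm:calderonsystem} or~\ref{thm:2by2system} is in force then gives $u\equiv 0$ near $0$, hence near $x_0$, the desired contradiction. This proves $N(\mathrm{supp}\,u)\subset\Sigma_{\mA}$; since the strict separation and imaginary-part bounds in (ii)--(iv) persist under small perturbations of $(x,N)$, the set $R_{\mA}$ is open and $\Sigma_{\mA}$ closed, so the inclusion passes to the closure, giving $\overline{N(\mathrm{supp}\,u)}\subset\Sigma_{\mA}$.

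For the concluding statement, suppose $\Sigma_{\mA}=\emptyset$, so that $\overline{N(\mathrm{supp}\,u)}=\emptyset$. The Dirichlet data $\partial_\nu^j u=0$ for $0\le j\le m-1$ let me extend $u$ by zero across $\partial X$ to a distribution $\tilde u$ supported in $\bar X$ with $\mA\tilde u=0$ in a neighborhood of $\bar X$ and no singular contribution along $\partial X$. If $\tilde u\not\equiv 0$ its support is a nonempty compact set; picking any point $y$ off the support and a nearest point $x_1\in\mathrm{supp}(\tilde u)$ to $y$ produces an exterior tangent ball and hence a pair $(x_1,\xi)\in N(\mathrm{supp}\,\tilde u)$, contradicting emptiness of the normal set. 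Therefore $\tilde u\equiv 0$ and $u\equiv 0$ in $X$.

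The main obstacle is precisely the reduction in the second paragraph: converting the one-sided vanishing furnished by the normal set (vanishing on a curved ball, not on a flat half-space) into the flat Cauchy configuration demanded by the appendix theorems, while verifying that membership in $R_{\mA}$, a condition phrased for the flat foliation with fixed normal $N_0$, survives the flattening diffeomorphism, whose higher-order curvature terms tilt the nearby normals away from $N_0$. This is exactly the geometric bookkeeping of \cite[Sections~8.5--8.6]{H-I-SP-83}, and the only genuinely new feature here is that $u$ is vector-valued and the uniqueness input is the redundant and upper-triangular one of Theorems~\ref{thm:calderonsystem}--\ref{thm:2by2system} rather than a scalar Carleman estimate; the openness of $R_{\mA}$ needed to take the closure is clearest in the elliptic case (i)-(ii)-(iv), where the admissible roots are simple and bounded away from the real axis.
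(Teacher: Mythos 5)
Your proposal is correct and follows essentially the same route as the paper, whose own proof is a one-sentence reduction to the local uniqueness theorems combined with the propagation-of-support machinery of H\"ormander, Sections 8.5--8.6 (the normal set of the support cannot contain a point/direction where a local UCP holds, plus the exterior tangent ball argument when the normal set is empty). You simply fill in more of the geometric bookkeeping (flattening, openness of $R_{\mA}$ so the inclusion passes to the closure, extension by zero across $\partial X$) than the paper does, and correctly identify that this bookkeeping is delegated to the cited reference.
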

The proof follows from the local results noting that the boundary to the support of $u$ cannot occur at a point and for a direction where a local UCP principle holds;  see \cite[Sections 8.5-8.6]{H-I-SP-83} for the definitions and derivations. 


\begin{remark}\rm
 Let $\mA$ be a $p\times q$ system with $p\geq q$ and let us assume that $\mA u=0$ in the above theorem is replaced by 
\begin{equation}
\label{eq:globalucpconst}
|(\mA u)_k|(x) \leq C \dsum_{|\alpha|\leq m-1} \dsum_{l=1}^q |D^\alpha u_l|(x),
\end{equation}
for a constant $C$ independent of $x\in \bar X$ and $1\leq k\leq p$. Let, as above,  $\Sigma_\mA$ be the set of points where the hypotheses of Theorem \ref{thm:calderonsystem} do not hold (with obvious modifications for  Theorems \ref{thm:2by2system}).  Then the conclusions of the above theorem, namely that $\bar N({\rm supp}(u))\subset \Sigma_{\mA}$, still hold; see, e.g., \cite{H-III-SP-94} and the proofs in the appendix.
\end{remark}

\subsection{Application to power-density measurements}
\label{sec:pduniq}

We now apply the unique continuation results of sections \ref{sec:ucph} and \ref{sec:ucpc} to the setting of power density measurements described in section \ref{sec:pdm}.

\subsubsection{Second-order linear systems of equations}

Both unique continuation results require that the system $\mA$ be elliptic in the regular sense, i.e., with $\tau=t_j$ independent of $j$ and $s_i=0$ for all $i$. The systems \eqref{eq:pdlinsyst} and \eqref{eq:deltauj}-\eqref{eq:constraintjk} are not in this form. We propose two modifications of the latter systems for which uniqueness results can be proven.

\subsubsection{System after elimination.}
Let us first consider the system \eqref{eq:deltauj}-\eqref{eq:constraintjk}. The latter constraints are first-order. However, they easily become second-order by differentiation. We thus consider the system
\begin{equation}
\label{eq:elim2nd}
\begin{array}{rcl}
  \nabla\cdot\gamma\big(-\nabla \delta u_j + 2 \hat F_j \hat F_j\cdot\nabla \delta u_j\big) &=& \nabla\cdot \dfrac{\delta H_j}{|F_j|^2} F_j.\\[3mm]
\nabla \big( 2\gamma\dfrac{1}{|F_j|^2} F_j\cdot\nabla \delta u_j - 2\gamma\dfrac{1}{|F_k|^2} F_k\cdot\nabla \delta u_k\big)  &=&   \nabla \big( \dfrac{\delta H_j}{|F_j|^2} - \dfrac{\delta H_k}{|F_k|^2}\big) \quad 1\leq j<k\leq J,
\end{array}
\end{equation}
in $X$ with boundary conditions $\delta u_j=\partial_\nu \delta u_j=0$ on $\dX$. The above system is clearly elliptic under the same hypotheses as  \eqref{eq:deltauj}-\eqref{eq:constraintjk}. Moreover, it satisfies the above ellipticity condition with $\tau=2$.

\subsubsection{System in triangular form.}
Let us come back to \eqref{eq:pdlinsyst}. We apply the operator $K_j=2\gamma |F_j|^{-2}F_j\cdot\nabla$ to the first line and $L_\gamma |F_j|^{-2}$ to the second line with $L_\gamma:=\nabla\cdot \gamma\nabla$ to obtain after subtraction
\begin{equation}
\label{eq:pdlinsyst2}
\begin{array}{rcl}
\nabla\cdot \delta\gamma F_j + \nabla\cdot \gamma\nabla \delta u_j &=& 0 \\
L_\gamma \delta\gamma - K_j \nabla\cdot \delta\gamma F_j +[L_\gamma K_j,K_j L_\gamma] \delta u_j &=& L_\gamma \dfrac{\delta H_j}{|F_j|^2}.
\end{array}
\end{equation}
Since $L_\gamma$ is second order and $K_j$ is first-order, the commutator $[L_\gamma K_j,K_j L_\gamma]$ is a second-order differential operator. We thus again obtain a system that is elliptic when \eqref{eq:pdlinsyst} is (we leave this proof to the reader; see also \eqref{eq:triangsyst} below) and that is in the appropriate form with $\tau=2$.

\subsubsection{Holmgren unique continuation result}

The results of section \ref{sec:ucph} apply to the power density measurement systems \eqref{eq:elim2nd} and \eqref{eq:pdlinsyst2}. Let $\mA$ denote the linear operator in one of the aforementioned systems. Then $\mA^t\mA$ augmented with Dirichlet conditions is injective provided that $\gamma$ and $u_j$ are sufficiently close to analytic coefficients or provided that the problem is posed on a sufficiently small domain, and provided of course that the quadratic forms $\{q_j\}$ are collectively elliptic on $\bar X$ in the sense of Definition \ref{def:ellcondpj}.

The main difference between \eqref{eq:elim2nd}  and \eqref{eq:pdlinsyst2} pertains to the boundary conditions. In the case of \eqref{eq:pdlinsyst2}, vanishing Dirichlet and Neumann boundary conditions are imposed on $\delta\gamma$ and $\delta u_j$. This means that the value of $(\gamma,\{u_j\})$ and of its (normal) derivatives is known on $\partial X$. In the case of \eqref{eq:elim2nd}, only the values of $(\{u_j\})$ and of its (normal) derivatives need to be known. 

In fact, the boundary conditions for $\delta\gamma$ at $x\in\partial X$ may be deduced from those for $\delta u_1$, say, provided that $\nu(x)$ is non characteristic for $P_1$. Indeed, we observe that 
\begin{displaymath}
  \delta H_1 = \delta\gamma |\nabla u_1|^2 + 2 \gamma \nabla u_j\cdot\nabla \delta u_1.
\end{displaymath}
This provides boundary conditions for $\delta\gamma$ when $\delta u_1$ and $\partial_\nu  \delta  u_1$ are known on $\partial X$. The derivation of $\partial_\nu \delta\gamma$ is obtained as follows. Using the equation
\begin{displaymath}
   \nabla\cdot \delta\gamma \nabla u_j + \nabla\cdot \gamma\nabla \delta u_j =0,
\end{displaymath}
and replacing $\delta\gamma$ above using the expression of $\delta H_1$, we obtain after straightforward eliminations that
\begin{displaymath}
   (1-2 (\widehat{\nabla u_1} \cdot \nu(x))^2) \partial^2_\nu u_1 =f_1,
\end{displaymath}
where $f_1$ is a known expression involving $\partial_\nu u_1$ and tangential derivatives of $u_1$ of degree up to $2$ at $x$, which are known since $u_1$ is known on $\partial X$. Since $\nu$ is non characteristic for $P_1$, this provides an expression for $\partial^2_\nu u_1$, and hence for $\partial_\nu\delta\gamma$ using the above expression for $\delta H_1$.

So knowledge of Cauchy data for $\delta\gamma$ is in fact a consequence of knowledge of Cauchy data for the solutions $\delta u_j$. Thus, the major requirement in solving $\mA^t\mA v=\mA^t\mS$ is that we impose that $\partial_\nu\delta u_j=g_j$ for a known function $g_j$, which implies that $\partial_\nu u_j$ is known on $\dX$.

\subsubsection{Calder\'on unique continuation result}
The symbol of the operator \eqref{eq:pdlinsyst} is elliptic in the Douglis-Nirenberg sense. Recall that  the results of unique continuation principle (UCP) of the preceding sections apply to operators that are elliptic in a classical sense. We thus consider the  modification of the system \eqref{eq:pdlinsyst2}. We were not able to obtain a satisfactory global UCP using Carleman-type estimates for the system \eqref{eq:elim2nd}.
   
UCP for such  the modified system \eqref{eq:pdlinsyst2} is based on proving a collective UCP for quadratic forms, which we now address in detail.
\begin{lemma}
\label{lem:ucphyper}
Let $P_j(x,D)$ be second-order operators with principal symbols given by $q_j(x,\xi)=2(\hat F_j\cdot\xi)^2-|\xi|^2$, where $\hat F_j(x)\in\Sm^{n-1}$ for $1\leq j\leq J$. 
Let $(x,N)\in\bar X\times\Sm^{n-1}$. For $\xi'\in\Rm^n$ such that $\xi'\cdot N=0$, we define
\begin{displaymath}
  q_{j;N}(x,\xi') = 2(\hat F_j\cdot\xi')^2 - \big(1-2(\hat F_j\cdot N)^2\big) |\xi'|^2.
\end{displaymath}
Let us assume that the quadratic forms are independent in the following UCP sense:
\begin{equation}\label{eq:UCPN}
  q_{j;N}(x,\xi') =0 \quad \mbox{ for all } 1\leq j\leq J\mbox{ s.t. } q_j(x,N)\not=0\qquad \mbox{ implies that } \qquad \xi'=0.
\end{equation}

Then $\{P_j\}$ collectively satisfies a UCP for $(x,N)$ in the sense that we can find a partition $\{\Omega_\nu\}$ such that  (i)-(ii)-(iii) holds.
\end{lemma}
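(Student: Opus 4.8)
The plan is to reduce the three conditions (i)--(iii) at the point $(x,N)$ to the single scalar sign of $q_{j;N}(x,\xi')$ by reading each $q_j$ as a quadratic polynomial in the normal frequency. First I would fix $x$ and the unit normal $N$, and decompose the dual variable as $\xi=\xi'+\tau N$ with $\xi'\cdot N=0$. Writing $a_j=\hat F_j\cdot\xi'$ and $b_j=\hat F_j\cdot N$ and using $|\xi|^2=|\xi'|^2+\tau^2$, expansion gives
\begin{displaymath}
 q_j(x,\xi'+\tau N)=(2b_j^2-1)\tau^2+4a_jb_j\tau+(2a_j^2-|\xi'|^2).
\end{displaymath}
The leading coefficient is exactly $q_j(x,N)=2b_j^2-1$, so the proviso $q_j(x,N)\neq0$ appearing in \eqref{eq:UCPN} is precisely the statement that $N$ is non-characteristic for $P_j$ and that $\tau\mapsto q_j(x,\xi'+\tau N)$ is genuinely of degree two.

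Next I would compute the discriminant of this quadratic, which a direct calculation identifies as $4\,q_{j;N}(x,\xi')$. This is the key algebraic fact, since it translates the qualitative information about the roots $\tau$ demanded by (i)--(iii) into the sign of $q_{j;N}$. Concretely, for any $j$ with $q_j(x,N)\neq0$ one obtains the trichotomy: if $q_{j;N}(x,\xi')>0$ the two roots are real and distinct; if $q_{j;N}(x,\xi')<0$ they form a non-real conjugate pair with $|\Im\tau|=\sqrt{|q_{j;N}|}/|q_j(x,N)|$; and if $q_{j;N}(x,\xi')=0$ there is a single double \emph{real} root, which is the one configuration forbidden by (i). Because $|b_j|\le1$ forces $|q_j(x,N)|\le1$, both the root separation in the real case and $|\Im\tau|$ in the complex case are bounded below by a fixed multiple of $\sqrt{|q_{j;N}(x,\xi')|}$.

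It then remains to build the covering so that on each piece some admissible index escapes the degenerate value $0$. Here I would invoke hypothesis \eqref{eq:UCPN}: for every unit tangential direction $\xi'$ (that is, $\xi'\cdot N=0$, $|\xi'|=1$), the contrapositive of \eqref{eq:UCPN} furnishes an index $j=j(\xi')$ with $q_j(x,N)\neq0$ and $q_{j;N}(x,\xi')\neq0$. By continuity of $\xi'\mapsto q_{j;N}(x,\xi')$ this nonvanishing, together with its sign, persists on an open neighborhood $\Omega_{\xi'}$ of $\xi'$ on the (compact) tangential unit sphere. A finite subcover $\{\Omega_\nu\}$ then exists, to each piece of which I assign the index $q=q(\nu):=j(\xi')$ and on which $q_{q(\nu);N}$ keeps constant sign and, after slightly shrinking, is bounded away from $0$. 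On such $\Omega_\nu$ the trichotomy shows that $p_{q(\nu)}=q_{q(\nu)}$ satisfies (i), while the uniform lower bound on $|q_{q(\nu);N}|$ supplies the uniform $\eps>0$ required in (ii) and (iii); homogeneity of the roots in $\xi'$ lets one check this on the unit sphere only. Since every inequality used is strict, all conditions remain valid for $(x,t)$ near the base point by continuity of the coefficients $\hat F_j$, as needed to feed Theorem~\ref{thm:calderonsystem}.

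I expect the only genuinely delicate point to be the role of the degenerate case $q_{j;N}=0$: the whole argument hinges on never being driven into a double real root, and it is exactly the collective independence assumption \eqref{eq:UCPN} that guarantees, at each tangential direction, an index escaping this degeneracy. Once the discriminant identity $4\,q_{j;N}=B^2-4AC$ is in hand, packaging this pointwise escape into a finite covering carrying uniform constants is routine.
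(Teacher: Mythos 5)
Your proposal is correct and follows essentially the same route as the paper: expand $q_j(x,\xi'+\tau N)$ as a quadratic in $\tau$ with leading coefficient $q_j(x,N)$, identify its discriminant with (four times) $q_{j;N}(x,\xi')$, and use hypothesis \eqref{eq:UCPN} together with compactness of the tangential unit sphere to produce a finite covering on which some admissible index has discriminant bounded away from zero, yielding (i)--(iii) with a uniform $\eps$. Your write-up is in fact slightly more explicit than the paper's about the root trichotomy and the quantitative lower bounds $|\tau_1-\tau_2|,\,2|\Im\tau|\ge 2\sqrt{|q_{j;N}|}$ coming from $|q_j(x,N)|\le 1$, and it subsumes the $n=2$ case that the paper treats separately.
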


\begin{proof}
The proof in dimension $n=2$ is trivial: $P_j$ satisfies UCP except for four directions on the null cone, i.e., for $\xi$ such that $q_j(x,\xi)=0$. Since \eqref{eq:UCPN} implies that $N$ is not in the intersection of the null cones, then UCP holds for one of the operators $P_j$ and we can choose $\Omega_1=\Sm^{1}$ in the partition.

 Let $(x,N)\in\bar X\times\Sm^{n-1}$ and $n\geq3$. Let $\xi'\in\Sm^{n-1}$ such that $\xi'\cdot N=0$. Then
 \begin{displaymath}
  q_j(x,\xi'+\tau N) = 2(\hat F_j\cdot (\xi'+\tau N))^2- |\xi'+\tau N|^2 = \big(2(\hat F_j\cdot N)^2-1)\tau^2 + 4 \hat F_j\cdot N\hat F_j\cdot \xi' \tau + 2 (\hat F_j\cdot \xi')^2-1.
\end{displaymath}
This quadratic polynomial (because $q_j(x,N)=2(\hat F_j\cdot N)^2-1\not=0$) with real-valued coefficients has a double root when the determinant
\begin{equation}\label{eq:Deltaprime}
 \Delta'_j = -1+2 \big((\hat F_j\cdot N)^2 + (\hat F_j\cdot \xi')^2\big) = q_{j;N}(x,\xi')=0.
\end{equation}
For $\xi'$ away from such points, then the UCP condition holds for $P_j$ since the roots cannot be simple and complex roots (which come in conjugate pairs) cannot be close to each-other. By assumption, $\Delta'_j=0$ for all $j$ is not possible and by continuity, $\max_{j}|\Delta'_j|$ is bounded away from $0$ so that different roots $\tau$ are separated by some $\eps>0$ (as well as the imaginary part of such roots since they come in conjugate pairs). This generates a finite partition $\{\Omega_\nu\}$. On each patch, one $P_j$ satisfies the UCP properties {\em (i)-(ii)-(iii)}.
\end{proof}

In dimension $n=2$, ellipticity of $(P_1,P_2)$ is equivalent to UCP of $(P_1,P_2)$ at each $(x,N)$. In dimension $n\geq3$, collective ellipticity and collective UCP are different notions. As soon as there is one $j$ such that $2(\hat F_j\cdot N)^2\geq1$, the UCP is satisfied at $(x,\xi)$ while ellipticity does not necessarily holds.
However, we have the result:
\begin{lemma}
\label{lem:ElltoUCP}
 In dimension $n=2$, collective UCP and collective ellipticity of operators of the form $P_j$ above is equivalent. In dimension $n=3$, collective ellipticity of $\{P_j\}$ such that $q_j(N)\not=0$ implies collective UCP at $(x,N)$.
\end{lemma}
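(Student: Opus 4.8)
The plan is to reduce both assertions to elementary identities relating the restricted quadratic forms $q_{j;N}$ on the hyperplane $N^\perp$ to the original forms $q_j$ on $\Rm^n$. Recall from Lemma~\ref{lem:ucphyper} that collective UCP at $(x,N)$ is precisely the statement \eqref{eq:UCPN} that the family $\{q_{j;N}\}$, taken over those indices $j$ with $q_j(x,N)\ne0$, is jointly elliptic on $N^\perp$, whereas collective ellipticity is joint ellipticity of $\{q_j\}$ on all of $\Rm^n$. I would first record the remark already used in the text: whenever $q_j(x,N)>0$ one has $q_{j;N}(x,\xi')=2(\hat F_j\cdot\xi')^2+(2(\hat F_j\cdot N)^2-1)|\xi'|^2$, which is positive definite, so a single such index already forces UCP. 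The remaining content concerns the indices with $q_j(x,N)<0$.

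For $n=2$ the computation is immediate. Fix $x$ and $N\in\Sm^1$, and let $\xi'\perp N$. Since $\{\xi'/|\xi'|,N\}$ is an orthonormal basis of $\Rm^2$ and $|\hat F_j|=1$, we have $(\hat F_j\cdot\xi'/|\xi'|)^2+(\hat F_j\cdot N)^2=1$, whence
\[
 q_{j;N}(x,\xi')=2(\hat F_j\cdot\xi')^2-(1-2(\hat F_j\cdot N)^2)|\xi'|^2=|\xi'|^2
\]
for every $j$, independently of $\hat F_j$ and $N$. Consequently UCP at $(x,N)$ holds if and only if there is at least one index $j$ with $q_j(x,N)\ne0$, i.e.\ if and only if $N$ is not a common zero of the family $\{q_j\}$. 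Quantifying over all $N\in\Sm^1$ (equivalently over all nonzero $\xi$ after normalization) this is exactly the condition that $\{q_j\}$ have no common nonzero zero, i.e.\ collective ellipticity at $x$; this gives the claimed equivalence.

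For $n=3$ the key is the identity that replaces the $n=2$ computation. Given a unit $\xi'\perp N$, complete $\{\xi',N\}$ to an orthonormal basis of $\Rm^3$ by adjoining $n=\xi'\times N$, and expand $\hat F_j=(\hat F_j\cdot\xi')\,\xi'+(\hat F_j\cdot N)\,N+(\hat F_j\cdot n)\,n$. Using $|\hat F_j|^2=1$ one finds
\[
 q_{j;N}(x,\xi')=2(\hat F_j\cdot\xi')^2+2(\hat F_j\cdot N)^2-1=1-2(\hat F_j\cdot n)^2=-q_j(x,n).
\]
With this in hand the statement follows by contraposition: if collective UCP fails at $(x,N)$, there is a nonzero $\xi'_0\in N^\perp$ with $q_{j;N}(x,\xi'_0)=0$ for every $j$ with $q_j(x,N)\ne0$; after normalizing $\xi'_0$ and setting $n_0=\xi'_0\times N\ne0$, the identity gives $q_j(x,n_0)=0$ for all these $j$, so the subfamily $\{q_j:\,q_j(x,N)\ne0\}$ has a common nonzero zero and is not elliptic. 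This is the contrapositive of the claim.

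The main obstacle, and the only genuinely nontrivial point, is the displayed identity $q_{j;N}(x,\xi')=-q_j(x,n)$: it rests on the fact that in $\Rm^3$ the orthogonal complement of $\mathrm{span}\{\xi',N\}$ is a single line, so that $(\hat F_j\cdot\xi')^2+(\hat F_j\cdot N)^2=1-(\hat F_j\cdot n)^2$ for a \emph{single} remaining component $\hat F_j\cdot n$. In dimension $n\ge4$ this complement is higher dimensional, the right-hand side becomes $1-\sum_k(\hat F_j\cdot n_k)^2$, and the identity---hence the implication---breaks down, consistent with the observation before the lemma that collective ellipticity and collective UCP genuinely diverge in higher dimension. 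One should also dispose of the routine bookkeeping (the subfamily being nonempty, which follows from its assumed ellipticity, and the positive-definite indices already handled above).
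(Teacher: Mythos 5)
Your proof is correct and follows essentially the same route as the paper's: the $n=2$ case reduces to the observation that $q_{j;N}$ is automatically definite on $N^\perp$, and the $n=3$ case decomposes $\hat F_j$ in the orthonormal frame $\{\xi',N,\xi'\times N\}$ to exhibit $\xi'\times N$ as a common null direction of the subfamily. Your identity $q_{j;N}(x,\xi')=-q_j(x,\xi'\times N)$ is just a cleaner packaging of the paper's computation $|\hat F_j''|^2=\tfrac12$, so no further comment is needed.
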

\begin{proof}
 The case $n=2$ is handled as in the proof of Lemma \ref{lem:ucphyper}. Consider now the case $n=3$. Non-UCP at $(x,N)$ implies from \eqref{eq:Deltaprime} and the fact that $N$ is non characteristic for $P_j$ that
 \begin{displaymath}
   (\hat F_j\cdot N)^2 + (\hat F_j\cdot \xi')^2 = \frac12.
\end{displaymath}
If we decompose $\hat F_j=\hat F_j\cdot N N + \hat F_j\cdot \xi' \xi' + \hat F_j"$ with $\hat F_j"$ orthogonal to ${\rm span}(N,\xi')$, then we find that 
\begin{displaymath}
  |\hat F_j"|^2 = \frac 12.
\end{displaymath} 
In dimension $n=3$, we find that $\hat F_j"=\pm|\hat F_j| N\times \xi'$. This implies that $N\times\xi'$ belongs to the null cone of $q_j$, i.e., $q_j(x,N\times\xi')=0$. This holds for every $j$, which implies that $N\times \xi'=0$, a contradiction.
\end{proof}

\begin{definition}\label{def:UCP}
We say that the family of operators $P_j(x,D)$ or of quadratic forms $q_j(x,\xi)$ collectively satisfy a global UCP on $\bar X$ if  for every $(x,N)\in \bar X\times \Sm^{n-1}$, we can find a family $\tilde q_j(x,\xi)=\mQ_{jk}q_k(x,\xi)$ with $\mQ$ an invertible matrix such that $\{\tilde q_j(x,\xi)\}$ collectively satisfies a UCP at $(x,N)$ in the sense of Lemma \ref{lem:ucphyper}.
\end{definition}

The above definition involves a linear change of quadratic forms for each $(x,N)$ that allows us to obtain a family of modified forms $\tilde q_j$ with similar properties to those of $q_j$ and such that $\tilde q_j(N)\not=0$; see Remark \ref{rem:LC}. This will prove useful in the analysis of reconstruction from  power density functionals. We will also need the following Lemma in dimension $n=3$.
\begin{lemma}
\label{lem:3dUCP}
Let $F_1$ and $F_2$ be three-dimensional vector fields on $\bar X$ such that for each $x\in\bar X$, ${\rm rank}(F_1,F_2)=2$. Let $F_3=F_1+F_2$. We denote by $\{P_j\}$ and $\{q_j\}$ the corresponding operators and quadratic forms defined in \eqref{eq:qjPj} for $1\leq j\leq 3$. Then $\{q_j\}$ satisfies a global UCP on $\bar X$.
\end{lemma}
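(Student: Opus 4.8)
The plan is to verify Definition \ref{def:UCP} pointwise: for each $(x,N)\in\bar X\times\Sm^{2}$ I must exhibit an invertible $\mQ$ so that the combined forms $\tilde q_j=\mQ_{jk}q_k$ satisfy the covering hypothesis of Lemma \ref{lem:ucphyper} at $(x,N)$. I fix $x$, write $a_j=\hat F_j\cdot N$ so that $q_j(N)=2a_j^2-1$, and record the two structural facts coming from $F_3=F_1+F_2$ and $\mathrm{rank}(F_1,F_2)=2$: writing $\hat F_3=\alpha\hat F_1+\beta\hat F_2$ with $\alpha=|F_1|/|F_1+F_2|$ and $\beta=|F_2|/|F_1+F_2|$ (so $\alpha,\beta>0$), the triangle inequality together with the linear independence of $F_1,F_2$ gives the \emph{strict} inequalities $\alpha+\beta>1$ and $|\alpha-\beta|<1$. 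These are the only quantitative inputs needed.

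The first reduction is the non-characteristic case. By Proposition \ref{prop:ell}(ii) the family $(q_1,q_2,q_3)$ is elliptic at every $x$ (indeed $\hat F_1\neq\hat F_2$ and $\alpha\beta\neq0$). If all three $q_j(N)\neq0$, the entire non-characteristic subfamily is collectively elliptic and Lemma \ref{lem:ElltoUCP} (in its $n=3$ form) gives the UCP at $(x,N)$ directly, with $\mQ=I$. It remains to treat directions $N$ at which some $q_j(N)=0$. Using the two facts above, all three cannot be characteristic at once: if $q_1(N)=q_2(N)=0$ then $a_1,a_2=\pm1/\sqrt2$ and $q_3(N)=(\alpha\sigma+\beta\rho)^2-1$ for signs $\sigma,\rho=\pm1$, and since $(\alpha\sigma+\beta\rho)^2\in\{(\alpha+\beta)^2,(\alpha-\beta)^2\}$ and neither equals $1$, we get $q_3(N)\neq0$. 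The same computation yields a dichotomy in the two-characteristic case: when $a_1,a_2$ have equal signs, $q_3(N)=(\alpha+\beta)^2-1>0$, so $q_{3;N}$ is positive definite on $N^{\perp}$ and $q_3$ alone covers all of $\Sm^{1}\subset N^{\perp}$ (UCP holds with $\mQ=I$); only when $a_1,a_2$ have opposite signs is $q_3(N)=(\alpha-\beta)^2-1<0$ and a genuine argument needed.

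For the remaining characteristic directions I would use the linear-combination freedom of Definition \ref{def:UCP} and Remark \ref{rem:LC}. The geometric picture is that, for a form $\tilde q$ non-characteristic at $N$, the discriminant of $\tau\mapsto\tilde q(\xi'+\tau N)$ vanishes at $\xi'\in N^{\perp}$ exactly when $\tilde q|_{\mathrm{span}(N,\xi')}$ is degenerate (for the special forms $q_j$ this discriminant is precisely $q_{j;N}(\xi')$, cf.\ \eqref{eq:Deltaprime}), and the covering hypothesis holds as soon as the discriminants of the chosen non-characteristic $\tilde q_j$ have no common zero on $\Sm^{1}\subset N^{\perp}$. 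In the one-characteristic case, say $q_3(N)=0$ with $q_1(N),q_2(N)\neq0$, the pair $\{q_1,q_2\}$ already suffices unless they share a null direction $\eta\in N^{\perp}$; in that single bad direction $\xi'_0=\eta\times N$ (for which $N\times\xi'_0=\eta$, so $q_1|_{\mathrm{span}(N,\xi'_0)}$ is degenerate) I would replace $q_3$ by $\tilde q_3=q_3+c\,q_1$, which is non-characteristic since $\tilde q_3(N)=c\,q_1(N)$. Because $\det q_1|_{\mathrm{span}(N,\xi'_0)}=0$, the map $c\mapsto\det(q_3+c\,q_1)|_{\mathrm{span}(N,\xi'_0)}$ is affine, hence nonzero for all but at most one $c$ unless it is identically zero. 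The opposite-sign two-characteristic case is symmetric: every combination of $q_1,q_2$ stays characteristic at $N$, so I bring in $q_3$ and cover its two bad directions with forms $q_j+c\,q_3$ for suitable $c\neq0$.

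The main obstacle is exactly this last verification: ruling out the degenerate coincidence in which the affine determinant above is \emph{identically} zero, i.e.\ $q_1|_\Pi$ and $q_3|_\Pi$ (or $q_2|_\Pi$ and $q_3|_\Pi$) are proportional on the bad plane $\Pi$ for every $c$. To exclude this I would exploit the explicit null-vector relation established en route—namely that a common null $\eta$ of $q_1,q_2$ satisfies $q_3(\eta)=|\eta|^2\big((\alpha\sigma+\beta\rho)^2-1\big)\neq0$, so $q_3$ is genuinely transverse at $\eta$—together with the strict inequalities $\alpha+\beta>1$, $|\alpha-\beta|<1$, to guarantee that at least one of the two available combinations (using $q_1$ or $q_2$) escapes degeneracy on each bad plane. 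Once a valid multiplier is secured at the finitely many bad directions for every $(x,N)$, the covering of Lemma \ref{lem:ucphyper} is obtained, the set $\Sigma_{\mA}$ of Theorem \ref{thm:globalUCP} is empty, and the global UCP follows.
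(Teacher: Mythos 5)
Your reduction to the characteristic directions is sound as far as it goes (the all-non-characteristic case via Proposition \ref{prop:ell}(ii) and Lemma \ref{lem:ElltoUCP}, the impossibility of three simultaneous characteristics, and the equal-sign dichotomy with $\alpha+\beta>1$, $|\alpha-\beta|<1$ are all correct), but the argument has a genuine gap precisely where you locate ``the main obstacle'': the patching of the finitely many bad directions is announced (``I would exploit\dots to guarantee\dots'') rather than carried out, and it is the only nontrivial part of the proof. Moreover, the repair forms you propose, $\tilde q_3=q_3+c\,q_1$ and $q_j+c\,q_3$, are generically \emph{not} of the form $2(\hat F\cdot\xi)^2-|\xi|^2$ for any unit vector $\hat F$ (the matrix $\hat F_3\otimes\hat F_3+c\,\hat F_1\otimes\hat F_1$ has rank two in general), so Lemma \ref{lem:ucphyper} and Lemma \ref{lem:ElltoUCP} --- which are stated and proved only for that special class, with $q_{j;N}$ computed from it via \eqref{eq:Deltaprime} --- do not apply to them. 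You would first have to extend the discriminant/covering analysis to general quadratic forms before your affine-determinant argument even makes sense inside the framework of Definition \ref{def:UCP}, and then still rule out the identically-degenerate coincidence.

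The paper avoids all of this with one observation you are missing: since $\hat F_1$, $\hat F_2$ and $\hat F_3$ are three pairwise non-proportional unit vectors in the two-plane ${\rm span}(F_1,F_2)$, the rank-one forms $\hat F_j\otimes\hat F_j$ span the symmetric bilinear forms on that plane, and because each has unit trace, suitable linear combinations of $q_1,q_2,q_3$ reproduce $q_c=2(\hat F_c\cdot\xi)^2-|\xi|^2$ for \emph{every} unit vector $\hat F_c$ in the plane --- i.e., the combinations stay inside the special class of \eqref{eq:qjPj}. One then simply picks three values of $c$ (an open dense set of choices works) so that the three new directions give an elliptic family with $\tilde q_j(N)\neq0$ for all three, and Lemma \ref{lem:ElltoUCP} applies verbatim; no case analysis on characteristic directions is needed, and the matrix $\mQ$ of Remark \ref{rem:LC} is invertible because the three chosen directions are pairwise non-proportional. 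If you want to salvage your route, replace your additive perturbations by combinations of this form so that every form you use remains in the class covered by Lemma \ref{lem:ucphyper}.
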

\begin{proof}
  Let us fix $(x,N)\in \bar X\times \Sm^2$ and choose a basis of $\Rm^3$ such that  $\hat F_1(x)=e_1$ and ${\rm span}(F_1,F_2)=(e_1,e_2)$. The three quadratic forms $q_j$ allow us to obtain by linear combination any $q_c$ corresponding to $F_c=c e_1+se_2$ with $s=\sqrt{1-c^2}$. It is then not difficult to find three values of $c$ such that the corresponding $\tilde q_j$ for $1\leq j\leq 3$ form an elliptic family and such that $\tilde q_j(N)\not=0$. From Lemma \ref{lem:ElltoUCP}, we obtain that $\{\tilde q_j\}$ satisfies a UCP at $(x,N)$. The hypotheses of global UCP in Definition \ref{def:UCP} are met.
\end{proof}

%
%

\subsubsection{Elliptic system in triangular form.} We now present a modification of the linear system for $(\delta\gamma,\{\delta u_j\})$ for which a global UCP result as described in the above definition can be obtained.

We recast \eqref{eq:pdlinsyst2} as
\begin{equation}
\label{eq:triangsyst}
\left(\begin{matrix} P_j & \tilde P_j \\ 0 & \Delta \end{matrix} \right) \left(\begin{matrix} \delta\gamma \\ \delta u_j \end{matrix} \right) = l.o.t.\left(\begin{matrix} \delta\gamma \\ \delta u_j \end{matrix} \right)  + S_j,
\end{equation}
where $l.o.t.$ means a system of differential operators of order at most $1$. Let $P$ and $\tilde P$ denote the $J\times 1$ columns of the second-order, homogeneous, operators $P_j$ and $\tilde P_j$, respectively. The symbol of $P_j$ is proportional to $q_j(x,\xi)$.  The operator $\Delta$ satisfies hypothesis {\em (iv)} of Theorem \ref{thm:2by2system} while the operator $\tilde P$ is of order $2$. If the operators $P$ collectively satisfy the hypotheses of Theorem \ref{thm:calderonsystem}, then the above system \eqref{eq:pdlinsyst2} satisfies a UCP, as does the fourth-order operator $\mA^t\mA$ as constructed in the preceding section:

\begin{theorem}
\label{thm:uniqpowdens}
Let $\mA v= \mS$ in $X$ be the system for $v=(\delta\gamma,\{\delta u_j\})$ described in \eqref{eq:triangsyst} and augmented with boundary conditions $v=\partial_\nu v=0$ on $\partial X$. Assume that the coefficients in $\mA$ are sufficiently smooth (see proof of Theorem \ref{thm:calderonsystem:app}). Let the operators $P_j$ above satisfy a UCP on $\bar X$ as described in Definition \ref{def:UCP} and Lemma \ref{lem:ucphyper}. Then any solution to that system, or to the system $\mA^t\mA v=\mA^t \mS$ with the same boundary conditions, is unique.
\end{theorem}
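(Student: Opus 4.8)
The plan is to prove injectivity of the homogeneous problem, so that uniqueness of both inhomogeneous systems follows by linearity. First I would dispose of the normal system: setting $\mS=0$, a solution $v$ of $\mA^t\mA v=0$ with $v=\partial_\nu v=0$ on $\partial X$ is, by Lemma~\ref{lem:Normal} applied with $\tau=2$ (so that $\mD v=0$ encodes precisely the vanishing Cauchy data $v=\partial_\nu v=0$), also a solution of $\mA v=0$ with the same boundary conditions. Hence it suffices to show that the only solution of
\[
\mA v = 0 \mbox{ in } X,\qquad \partial^j_\nu v=0 \mbox{ on } \partial X,\quad j=0,1,
\]
is $v\equiv 0$, where $\mA$ is the second-order operator of \eqref{eq:triangsyst}.

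The second step feeds this into the global continuation principle. By Theorem~\ref{thm:globalUCP}, applied to the order-$m=2$ system $\mA$, one has $\bar N({\rm supp}(v))\subset\Sigma_\mA$, where $\Sigma_\mA=\bar X\times\Sm^{n-1}\setminus R_\mA$ and $R_\mA$ collects the $(x,N)$ at which the local hypotheses of Theorem~\ref{thm:calderonsystem} or Theorem~\ref{thm:2by2system} hold. The lower-order terms on the right of \eqref{eq:triangsyst} are absorbed into the differential-inequality formulation \eqref{eq:globalucpconst} and so play no role at leading order. It therefore remains to prove the purely pointwise statement $R_\mA=\bar X\times\Sm^{n-1}$, i.e.\ that the local UCP of Theorem~\ref{thm:2by2system} holds at every $(x,N)$; granting this, $\Sigma_\mA=\emptyset$, $\bar N({\rm supp}(v))=\emptyset$, and $v\equiv 0$.

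The crux is verifying the local hypotheses at a fixed $(x,N)\in\bar X\times\Sm^{n-1}$. Since $\{P_j\}$ satisfies the global UCP of Definition~\ref{def:UCP} (guaranteed in the relevant three-dimensional geometry by Lemma~\ref{lem:3dUCP}), I choose an invertible $\mQ=\mQ(x,N)$ so that the combinations $\tilde q_j=\mQ_{jk}q_k$ collectively satisfy the UCP of Lemma~\ref{lem:ucphyper} at $(x,N)$: there is a finite covering $\{\Omega_\nu\}$ of $\Sm^{n-1}$ realizing \emph{(i)--(ii)--(iii)}, and the change of forms is arranged so that $\tilde q_j(x,N)\neq 0$, i.e.\ $N$ is non-characteristic for the combined top-row operators. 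Applying $\mQ$ to the first block row of \eqref{eq:triangsyst} replaces $(L_1,L_0)=(P,\tilde P)$ by $(\mQ P,\mQ\tilde P)$, which by Remark~\ref{rem:LC} leaves the conclusion of Theorem~\ref{thm:2by2system} unchanged. The system then matches the triangular template of Theorem~\ref{thm:2by2system}: the diagonal block $L_1=\mQ P$ has order $m=2$ and satisfies the hypotheses of Theorem~\ref{thm:calderonsystem} through the covering $\{\Omega_\nu\}$; the off-diagonal block $L_0=\mQ\tilde P$ has order $2$; the lower-left block is $0$ at the principal level, the genuine coupling $\nabla\cdot\delta\gamma\,F_j$ being first order and hence of order $\leq m-1=1$; and the remaining diagonal block, with principal symbol $-\gamma|\xi|^2$, has the simple roots $\tau=\pm i|\xi'|$ with $|\Im\tau|=|\xi'|\geq\eps$, so it fulfills the ellipticity hypothesis \emph{(i)--(ii)--(iv)}. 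Theorem~\ref{thm:2by2system} then yields the local UCP at $(x,N)$, and since $(x,N)$ was arbitrary, $\Sigma_\mA=\emptyset$ and $v\equiv 0$.

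The main obstacle I anticipate is precisely this last step: the change of basis $\mQ(x,N)$ depends on both the base point and the direction, and one must check it can be chosen to \emph{simultaneously} render $N$ non-characteristic for the top row and preserve the separated-roots covering \emph{(i)--(ii)--(iii)} demanded by Theorem~\ref{thm:calderonsystem}, while acting only on the $(P,\tilde P)$ block so that the elliptic diagonal block is left intact. This is exactly the content Definition~\ref{def:UCP}, Lemma~\ref{lem:ElltoUCP}, and Lemma~\ref{lem:3dUCP} are designed to supply, so the difficulty is in invoking them correctly rather than in new estimates. A secondary care point is matching all lower-order contributions---the first-order coupling from $\nabla\cdot\delta\gamma\,F_j$ and the lower-order source terms of \eqref{eq:triangsyst}---to the inequality version \eqref{eq:globalucpconst}, ensuring that the leading-order triangular structure alone governs the continuation.
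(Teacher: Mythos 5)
Your proposal is correct and follows the same route as the paper: the paper's proof is the single observation that Theorem \ref{thm:2by2system} applies directly because $\Delta$ satisfies hypothesis \emph{(iv)}, $\tilde P$ is of order $2$, and $P$ collectively satisfies a UCP at each $(x,N)\in\bar X\times\Sm^{n-1}$, with the reduction of $\mA^t\mA$ to $\mA$ via Lemma \ref{lem:Normal} and the local-to-global passage via Theorem \ref{thm:globalUCP} left implicit from the preceding sections. Your write-up simply makes those implicit steps (including the role of $\mQ$ and Remark \ref{rem:LC} in securing the non-characteristic condition) explicit, and all of them check out.
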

\begin{proof}
  The proof is a direct application of Theorem \ref{thm:2by2system} since $\Delta$ satisfies hypothesis {\em (iv)}, $\tilde P$ is of order $2$, and $P$ collectively satisfies a UCP at each $(x,N)$ in $\bar X\times \Sm^{n-1}$. 
\end{proof}

In dimensions $n=2$ and $n=3$, we have the following sufficient conditions on the internal functionals to guaranty injectivity of $\mA^t\mA$:
\begin{corollary}\label{cor:UCP}
\label{cor:3d} Assume that $(u_1,u_2)$ are solutions such that $F_1=\nabla u_1$ and $F_2=\nabla u_2$ are such that ${\rm rank}(F_1,F_2)=2$ for each $x\in\bar X$.  

In dimension $n=2$, assume moreover that  $F_1\cdot F_2\not=0$ for each $x\in\bar X$. Then $\{P_1,P_2\}$ satisfies a global UCP property on $\bar X$ and the results of Theorem \ref{thm:uniqpowdens} hold.

In dimension $n=3$, define $F_3=F_1+F_2$. Then $\{P_1,P_2,P_3\}$ satisfies a global UCP property on $\bar X$ and the results of Theorem \ref{thm:uniqpowdens} hold.
\end{corollary}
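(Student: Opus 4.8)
The plan is to reduce the corollary to verifying the global UCP property of Definition \ref{def:UCP} for the relevant family of quadratic forms; once that property is in hand, Theorem \ref{thm:uniqpowdens} applies verbatim and yields uniqueness both for the triangular system \eqref{eq:triangsyst} and for its normal form $\mA^t\mA$ with the stated Cauchy conditions. So the task reduces to producing, for each $(x,N)\in\bar X\times\Sm^{n-1}$, an invertible linear combination $\tilde q_j=\mQ_{jk}q_k$ that is non-characteristic at $N$ and satisfies the UCP hypotheses of Lemma \ref{lem:ucphyper}.

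First I would treat $n=2$. Here ${\rm rank}(F_1,F_2)=2$ says $\hat F_1(x)$ and $\hat F_2(x)$ are nowhere parallel and $F_1\cdot F_2\not=0$ says they are nowhere orthogonal, so Proposition \ref{prop:ell}(i) gives that $(q_1,q_2)$ is an elliptic family at every $x\in\bar X$. Because an invertible $\mQ$ leaves the common null set $\{q_1=q_2=0\}=\{0\}$ unchanged, collective ellipticity survives any such change, and then Lemma \ref{lem:ElltoUCP} (the $n=2$ equivalence of collective ellipticity and collective UCP) supplies the UCP at $(x,N)$ for the modified family. It remains only to choose $\mQ$ with $\tilde q_j(N)\not=0$: taking $\tilde q_1=q_1+a\,q_2$ and $\tilde q_2=q_1+b\,q_2$ with $a\not=b$, ellipticity prevents $q_1(N)$ and $q_2(N)$ from vanishing together, so $a$ and $b$ can avoid the single bad ratio and make both $\tilde q_j(N)\not=0$. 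This realizes Definition \ref{def:UCP} and settles $n=2$.

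For $n=3$ the hard work is already packaged: with ${\rm rank}(F_1,F_2)=2$ and $F_3=F_1+F_2$ we are exactly in the hypotheses of Lemma \ref{lem:3dUCP}, which states directly that $\{q_1,q_2,q_3\}$ satisfies a global UCP on $\bar X$ in the sense of Definition \ref{def:UCP}. Theorem \ref{thm:uniqpowdens} then applies with no further argument.

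The main obstacle, such as it is, is the bookkeeping for Definition \ref{def:UCP} in the case $n=2$: the UCP condition \eqref{eq:UCPN} of Lemma \ref{lem:ucphyper} is formulated only through the reduced forms $q_{j;N}$ for indices with $q_j(N)\not=0$, so one must be sure the linear combination keeps a non-characteristic operator on the diagonal of \eqref{eq:triangsyst} without spoiling collective ellipticity. As noted above this is automatic from the invariance of the common null cone under invertible $\mQ$, so no estimate beyond Proposition \ref{prop:ell}, Lemma \ref{lem:ElltoUCP} and Lemma \ref{lem:3dUCP} is needed, and the corollary is their synthesis.
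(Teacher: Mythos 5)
Your argument is correct and follows the same route as the paper, which simply cites Lemma \ref{lem:3dUCP}, Remark \ref{rem:LC} and Theorem \ref{thm:uniqpowdens}; your $n=2$ case via Proposition \ref{prop:ell}(i) and Lemma \ref{lem:ElltoUCP}, with a linear change $\mQ$ to make both forms non-characteristic at $N$, is exactly the intended use of Definition \ref{def:UCP}. The only detail you leave tacit is that in $n=2$ a combination $q_1+aq_2$ stays within the class $2(\hat F\cdot\xi)^2-|\xi|^2$ covered by Lemma \ref{lem:ucphyper}, which holds because every nonzero trace-free symmetric $2\times2$ form is a scalar multiple of one of that type.
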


The proof is immediate using Lemma \ref{lem:3dUCP}, remark \ref{rem:LC} and the preceding Theorem.

We have thus exhibited a system of equations  $\mA^t\mA v=\mA^t \mS$ with boundary conditions $v$ and $\partial_\nu v$ prescribed on $\partial X$, which admits a unique solution that verifies the stability estimate \eqref{eq:pdstab} (generalized as in \eqref{eq:stabnormal} for non-homogeneous boundary conditions) with $C_2=0$.

As in the preceding section devoted to the Holmgren uniqueness result, this result comes at the cost of having to impose boundary conditions of the form $v$ and $\partial_\nu v$ to both $v=\delta u_j$ and $v=\delta\gamma$. As we saw in the preceding section, the boundary conditions for $\delta\gamma$ at $x\in\partial X$ may be deduced from those for $\delta u_j$.

Comparing ellipticity and uniqueness criteria for $\mA$ in \eqref{eq:triangsyst}, we observe that ellipticity is obtained by imposing conditions $\delta u_j=g_j$ on $\dX$ and ensuring that $\{q_j\}$ is an elliptic family of quadratic forms. UCP is obtained by imposing the additional boundary conditions $\partial_\nu\delta u_j=\psi_j$ and by ensuring that $\{q_j\}$ satisfy a global UCP on $\bar X$. In dimension $n=2$, global ellipticity and global UCP for $\{q_j\}$ are equivalent. In dimension $n=3$, both are satisfied for the family of three internal functionals described in Corollary \ref{cor:UCP}.


%
\section{Local uniqueness for nonlinear inverse problem}
\label{sec:nonlinear}
%

Once the injectivity of a linear system can be established, standard theories may be applied to obtain local uniqueness results for the nonlinear inverse problems. Let us recast the nonlinear system of PDE \eqref{eq:syst} as
\begin{equation}
\label{eq:nonlinearnotmod}
\begin{array}{rcl}
 \tilde\mF (v) &=& \tilde \mH \quad \mbox{ in } X \\
 \tilde \mB v &=& \tilde g \quad \mbox{ on } \partial X.
\end{array}
\end{equation}
Here $v=(\gamma,\{u_j\}_{1\leq j\leq J})$ and $\tilde \mB$ is an operator that maps $v$ to the trace of $\{u_j\}$ on $\partial X$. We assume here, as is the case for hybrid inverse problems, that the boundary condition operator $\tilde \mB$ is linear.

The linearization of the above system involves the operator $\tilde\mF'(v_0)$ for a fixed $v_0$. The analysis of the preceding sections did not allow us to show that the differential operator $\tilde \mF'(v_0)$ augmented with the boundary conditions $\tilde \mB$ was invertible. Rather, we obtained that for an appropriate linear differential operator $\mT(v_0)$, then $\mA:=\mT(v_0)\tilde \mF'(v_0)$ augmented with the augmented boundary conditions $\mB$ was invertible. More precisely, we obtained that
\begin{equation}
\label{eq:linearmod}
\begin{array}{rcl}
  \mA w := \mT(v_0) \tilde \mF'(v_0) w &=& \mS  \quad \mbox{ in } X \\
 \mB w &=&  h\quad \mbox{ on } \partial X,
\end{array}
\end{equation}
admitted a unique solution with a stability estimate given by
\begin{equation}
\label{eq:stablinear}
  \|w\|_\mX \leq C \|(\mS,h) \|_\mY.
\end{equation}

The above linearized operator is the linearization of the nonlinear operator
\begin{equation}
\label{eq:nonlinearmF}
\mF(v_0+w):=\mT(v_0)\tilde \mF(v_0+w).
\end{equation}
 We thus modify the original inverse problem and replace it with an inverse problem that is necessarily satisfied by the exact solution $v=v_0+w$ we wish to reconstruct and is given by
\begin{equation}
\label{eq:nonlinearmod2}
\begin{array}{rcl}
 \mF (v_0+w) &=& \mH:=\mT(v_0) \tilde\mH \quad \mbox{ in } X \\
 \mB (v_0+w) &=& g \quad \mbox{ on } \partial X.
\end{array}
\end{equation}
Borrowing the notation of section \ref{sec:param}, we observe that the nonlinear operator $(\mF(\cdot),\mB \cdot)$ maps $\mX=\mU(p,l)$ to $\mY=\mR(p,l)$ for appropriately defined spaces $\mU$ and $\mR$ for $(\mA,\mB)=(\mF'(v_0),\mB)$.

Let us pause on the definition of the boundary condition $g$ for $v_0+w$. We cannot expect $\mB v_0 =g$ with $g$ known so that $\mB w =0$ on $\partial X$. The reason is that $v_0$ is typically constructed by guessing $\gamma_0$ and solving the linear elliptic problems for $\{u_{j,0}\}$ with imposed Dirichlet conditions. It is for such a construction of $v_0$ that we were able to show that $A=(\mA,\mB)$ above was injective for $\mA=\mF'(v_0)$. The boundary condition $g_0:=\mB v_0$ thus partially depends on solving the above problem and is not in general given by the measurements $g$ (unless $v_0$ is the solution $v$). We thus recast the (modified) nonlinear hybrid inverse problem as
\begin{equation}
\label{eq:nonlinearmod}
\begin{array}{rcl}
 \mF (v_0+w) &=& \mH:=\mT(v_0) \tilde\mH \quad \mbox{ in } X \\
 \mB w &=& g-g_0 \quad \mbox{ on } \partial X.
\end{array}
\end{equation}
The objective of the following section is to provide an iterative algorithm to reconstruct $w$ provided that $v_0$ is sufficiently close to the solution we wish to obtain in the sense that $g-g_0$ and $\mH-\mH_0$ are sufficiently small with $\mH_0:=\mF(v_0)$. In section \ref{sec:injectivity}, we obtain an injectivity result stating that if $\mF(v_0+w)=\mF(v_0+\tilde w)$ and $\mB w=\mB \tilde w$, then $w=\tilde w$.

\begin{remark}
\label{rem:powerdensity} 

In the power density setting, we recast \eqref{eq:ellj} and \eqref{eq:pdj} as
\begin{displaymath}
   \mF_{2j-1}(v_0+w) = 0,\qquad \tilde \mF_{2j}(v_0+w)=H_j,\qquad 1\leq j\leq J,
\end{displaymath}
respectively. Let then $L_\gamma$, $F_j$, and $M_j$ constructed from $v_0=(\gamma,\{u_j\})$ as in the derivation of \eqref{eq:pdlinsyst2}. Finally, let us define
\begin{displaymath}
  \mF_{2j}(v_0+w) = L_\gamma \tilde \mF_{2j-1}(v_0+w) - M_j \mF_{2j-1}(v_0+w) = |F_j|^{-2} H_j =: K_j,\quad 1\leq j\leq J.
\end{displaymath}
The system $\mF_{2j-1}(v_0+w)=0$ and $\mF_{2j}(v_0+w)=K_j$ for $1\leq j\leq J$ is recast as 
\begin{displaymath}
  \mF(v_0+w) = \mH,
\end{displaymath}
in the notation of \eqref{eq:nonlinearmod} and implicitly defines the linear operator $\mT(v_0)$. We denote by $\mA=\mF'(v_0)$ the linearization of $\mF$ at $v_0$, which agrees with the differential operator defined in \eqref{eq:pdlinsyst2}.

Whereas the operator $\tilde \mB$ maps $v_0$ to the traces of $\{u_j\}$ on $\partial X$, the extended operator $\mB$ maps $v_0$ to the traces of $v_0$ and $\nu\cdot \nabla v_0$ on $\partial X$.

\end{remark}

\subsection{Iterative fixed point and reconstruction procedure}
\label{sec:fixedpoint}

Let us define
\begin{equation}
\label{eq:taylormF}
\mF(v_0+w) = \mF(v_0) + \mF'(v_0) w + \mG(w;v_0),
\end{equation}
where $\mG(w;v_0)$ is quadratic in the first variable in the sense that 
\begin{equation}
\label{eq:quadG}
  \|(\mG(w;v_0),0)\|_\mY \leq C \|w\|_\mX^2.
\end{equation}
The latter estimate comes from the fact that $\mF(v)$ is polynomial in $v$ and the partial derivatives $D$ and that $\mX$ is an algebra. We may thus recast the nonlinear system of equations for $w$ as
\begin{equation}
\label{eq:nonlinearw}
\begin{array}{rcl}
 \mA w &=& \mH - \mH_0 - \mG(w;v_0) \quad \mbox{ in } X \\
 \mB w &=& g-g_0 \quad \mbox{ on } \partial X.
\end{array}
\end{equation}
Since the linear operator $A=(\mA,\mB)$ is invertible, we may recast the above equation as
\begin{equation}
\label{eq:integralw}
 w = \mI(w) :=A^{-1}(\mH - \mH_0, g-g_0) - A^{-1}(\mG(w;v_0),0) .
\end{equation}
From the polynomial structure of $\mF$and the boundedness of $A^{-1}$ from $\mY$ to $\mX$, we deduce in addition to \eqref{eq:quadG} that
\begin{equation}
\label{eq:constants}
\begin{array}{rcl}
  \|A^{-1}(\mG(w;v_0),0)\|_{\mX} &\leq & C_1 \|w\|_\mX^2\\
  \|\mI(w)-\mI(\tilde w)\|_\mX &\leq& C_2 (\|w\|_{\mX} + \|\tilde w\|_{\mX}) \|w-\tilde w\|_{\mX}.
\end{array}
\end{equation}
As a consequence, for $\|w\|_\mX\leq R$ and $\|A^{-1}(\mH-\mH_0,g-g_0)\|_\mX\leq \eta$ sufficiently small so that $\eta+C_1R^2\leq R$ and $2C_2 R<1$, we find that $\mI$ is a contraction from the ball $B=\{w,\, \|w\|_{\mX}\leq R\}$ onto itself.

This shows that for $(\mH-\mH_0,g-g_0)$ sufficiently small, then the solution to \eqref{eq:nonlinearw} is unique and is obtained by the converging algorithm $w^{k+1}=\mI(w^k)$ initialized with $w^0=0$.

\subsection{Injectivity results}
\label{sec:injectivity}

The fixed point algorithm of the preceding section provides us with an explicit local reconstruction procedure for the nonlinear hybrid inverse problem. A similar methodology allows us to obtain local uniqueness results that are more general but not constructive. Using the same notation as in the preceding section, let us assume that 
\begin{equation}
\label{eq:uniqF}
\mF(v_0+w) = \mF(v_0+\tilde w) = \mH\quad\mbox{ in } X,\qquad \mB w=\mB \tilde w\mbox{ on }\partial X,
\end{equation}
for a fixed $v_0\in\mX$. In other words, the measurements associated with $v_0+w$ and $v_0+\tilde w$ are identical. Injectivity of the nonlinear problem $\mF$ locally in the vicinity of $v_0$ then means proving that $w=\tilde w$.

Since $\mF$ is a polynomial in $w$, then there is a polynomial function in $w$ and $\tilde w$ such that 
\begin{equation}
\label{eq:diffF}
\mF(v_0+\tilde w) - \mF(v_0+w) = \mJ(w,\tilde w) \cdot (\tilde w -w).
\end{equation}

When $\tilde w-w$ is sufficiently small, then $\mJ(w,\tilde w)$ satisfies the same properties as $\mF'(v_0)$, the Fr\'echet derivative of $\mF$. For instance, the ellipticity and unique continuation properties of $\mF'(v_0)$ still hold for $\mJ(w,\tilde w)$ for $w$ and $\tilde w$ small in $\mX$. As a consequence, we obtain that \eqref{eq:uniqF} implies that $w=\tilde w$ and hence that $\mF$ is injective. More generally, $\mJ(w,\tilde w)$ may still admit a left-inverse for $w$ and $\tilde w$ not necessarily close to each-other, in which case we also deduce that $w=\tilde w$ for $w$ and $\tilde w$ not necessarily small.

Let us assume more generally that we have two measurements
\begin{displaymath}
   \mF(v_0+w) = \mH,\quad \mB w=g-g_0 ;\qquad \mF(v_0+\tilde w) = \tilde\mH,\quad \mB\tilde w=\tilde g-g_0.
\end{displaymath}
Assume that $\mF'(v_0)$ is elliptic and injective. Then, $\mJ(w,\tilde w)$ is elliptic and hence admits a left-inverse (since it is injective), at least for $w$ and $\tilde w$ sufficiently close to $0$. As a consequence,  we obtain as above the stability estimate
\begin{equation}
\label{eq:stabnonlinear}
  \|w-\tilde w\|_{\mX} \leq C \|(\mH-\tilde\mH,g-\tilde g)\|_{\mY}.
\end{equation}

Note that the nonlinear problem \eqref{eq:uniqF} is invertible generically. Indeed, for $v_0$ analytic, $\mA^t\mA$ augmented with Dirichlet boundary conditions $\mD$ is invertible. As a consequence, the above result shows that \eqref{eq:uniqF} may be inverted for $(v_0+w)$ in an open set including $v_0$. Since analytic coefficients $v_0$  in $X_0$ restricted to $X$ are dense in the set of sufficiently smooth coefficients on $X$, we obtain that the inverse problem may be inverted {\em generically}; see \cite{SU-JFA-09}. 
When $v_0$ is not analytic, then we need to find a unique continuation principle based on Carleman estimates to obtain an estimate of the form \eqref{eq:stabnonlinear} for the fully nonlinear hybrid inverse problem.

\cout{
OLD OLD 
%
\subsection{Iterative fixed point and reconstruction procedure}
\label{sec:fixedpoint}

Let us consider a setting where $(\mA,\mB)$ is injective or $(\mA^t\mA,\mD)$ is invertible so that the linearized problem \eqref{eq:linear} is solved by either $v=(I-T)^{-1}RA(\mS,0)$ or by $v=(\mA^t\mA)^{-1}(\mA^t\mS,0)$. We now present standard results concerning the nonlinear hybrid inverse problem \eqref{eq:syst}.

Let $v_0=(\gamma,\{u_j\})\in\mX=\mU(p,l)$ the point about which \eqref{eq:syst} was linearized to give \eqref{eq:linsyst}. We recast both equations in \eqref{eq:syst} as 
\begin{equation}
\label{eq:inX}
\mF(v_0+v) = \mH,
\end{equation}
where $\mH\in\mY$ collects the available data $\{H_j\}$ and $\mY$ is defined as the appropriate subset of $\mR(p,l)$ defined in section \ref{sec:param}. We also define $\mH_0:=\mF(v_0)$. 

In the procedure of construction of a left-parametrix to $A$, we impose in addition to \eqref{eq:inX} the boundary condition $\mB(v_0+v)=\mB(v_0)$ so that $\mB(v)=0$. Note that in the setting of power-density measurements, we do not have injectivity of such an operator $A$. Let us assume nonetheless that $1$ is not an eigenvalue of the operator $T$ so that $(I-T)^{-1}R$ is a bounded left-inverse for $A$. 

In the inversion of $(\mA^t\mA,\mD)$, we impose in addition to \eqref{eq:inX} the boundary condition $\mD(v_0+v)=\mD(v_0)$ so that $\mD(v)=0$. We have obtained in the preceding section several unique continuation results that apply to the differential operator $\mA$ given in \eqref{eq:elim2nd} or \eqref{eq:pdlinsyst2}.

We recast \eqref{eq:inX} as 
\begin{displaymath}
  \mF'(v_0) v = \mH -\mF(v_0) - \big( \mF(v_0+v)-\mF(v_0)-\mF'(v_0) v\big).
\end{displaymath}
We are in a setting where $\mF'(v_0)$ augmented with appropriate vanishing boundary conditions admits a left-parametrix, which we denote by $(\mF')^{-1}(v_0)$, and which takes the expression $(I-T)^{-1}R$ or $(\mA^t\mA)^{-1}\mA^t$ with boundary conditions $\mB v=0$ and $\mD v=0$, respectively.

We then have that
\begin{equation}
\label{eq:contraction}
v = \mG(v) := (\mF')^{-1}(v_0) ( \mH-\mH_0) - (\mF')^{-1}(v_0)   \big( \mF(v_0+v)-\mF(v_0)-\mF'(v_0) v\big).
\end{equation}

Since $\mF(w)$ is polynomial in $w$ with smooth coefficients and we have the stability estimate \eqref{eq:stabell} with $C_2=0$,  we obtain that 
\begin{displaymath}\begin{array}{rcl}
   \|\mG(v)-\mG(w)\|_{\mX} &\leq& C_1 (\|v\|_{\mX}+\|w\|_{\mX}) \|v-w\|_{\mX} \\
     \| (\mF')^{-1}(v_0) \big( \mF(v_0+v)-\mF(v_0)-\mF'(v_0) v\big) \|_{\mX} &\leq & C_3 \|v\|_{\mX}^2,
     \end{array}
\end{displaymath}
for $\|v\|_{\mX}$ sufficiently small since $\mY$ forms an algebra. 

For $\|v\|_{\mX}<R$ sufficiently small and $\| (\mF')^{-1}(v_0) ( \mH-\mH_0)\|_{\mX}<\eta$ sufficiently small so that $\eta+C_3R^2\leq R$ and $2C_1R<1$, we find that $\mG$ is a contraction from $B=\{v,\, \|v\|_{\mX}\leq R\}$ to itself. This proves that for $\mH$ sufficiently close to $\mH_0$, then the solution to \eqref{eq:inX} is unique and may be obtained by the converging algorithm $v^{k+1}=\mG(v^k)$.

Provided that $(\mathcal F'(v))^{-1}$ is shown to exist and to be uniformly bounded as an operator from $\mY$ to $\mX$ for $v$ in the vicinity of $v_0$, then a Newton scheme with faster convergence properties may be introduced:
\begin{equation}
\label{eq:Newton}
 v^{k+1}-v^k = (\mathcal F'(v_0+v^k))^{-1} \big(\mH-\mF(v_0+v^k)\big).
\end{equation}
The analysis of the scheme and its very fast (quadratic) convergence rate are then classical.

\subsection{Application to power density measurements.}

Unfortunately, the framework we just described does not directly apply to the setting of power density measurements. The reason, as described in section \ref{sec:pduniq}, is that the operator $\mA$ introduced in \eqref{eq:pdlinsyst} is elliptic in the DN sense but $\mA^t\mA$ is not elliptic. The uniqueness results presented in the preceding section thus do not apply directly to \eqref{eq:pdlinsyst}. We concentrate here on \eqref{eq:pdlinsyst2} for which we were able to prove a unique continuation principle and for which the Holmgren theory of injectivity applies.

We first realize that \eqref{eq:pdlinsyst2} is no longer the linearization of the \eqref{eq:ellj}-\eqref{eq:pdj}. We need to construct a nonlinear problem for $v$ whose linearization at $v_0=(\gamma,\{u_j\})$ would be given by \eqref{eq:pdlinsyst2}. This is done as follows. We recast \eqref{eq:ellj} and \eqref{eq:pdj} as
\begin{displaymath}
   \mF_{2j-1}(v_0+v) = 0,\qquad \tilde \mF_{2j}(v_0+v)=H_j,\qquad 1\leq j\leq J,
\end{displaymath}
respectively. Let then $L_\gamma$, $F_j$, and $M_j$ constructed from $v_0=(\gamma,\{u_j\})$ as in the definition of \eqref{eq:pdlinsyst2}. Finally, let us define
\begin{displaymath}
  \mF_{2j}(v_0+v) = L_\gamma \tilde \mF_{2j-1}(v_0+v) - M_j \mF_{2j-1}(v_0+v) = |F_j|^{-2} H_j =: K_j,\quad 1\leq j\leq J.
\end{displaymath}
The system $\mF_{2j-1}(v_0+v)=0$ and $\mF_{2j}(v_0+v)=K_j$ for $1\leq j\leq J$ is recast as 
\begin{displaymath}
  \mF(v_0+v) = \mH,
\end{displaymath}
to fit the notation in \eqref{eq:inX}. We then denote by $\mA=\mF'(v_0)$ the linearization of $\mF$ at $v_0$, which then agrees with the differential operator defined in \eqref{eq:pdlinsyst2}.

It is for this problem that the preceding theory (almost) applies. The main practical difficulty resides in the imposition of redundant boundary conditions in the equation 
\begin{displaymath}
  \mF(v_0) = \mH_0, \quad \mbox{ with } v_0 \mbox{ and } \partial_\nu v_0 \mbox{ prescribed on } \partial X.
\end{displaymath}
It is indeed unlikely that an initial guess $v_0$, obtained for instance from an initial guess for $\gamma$ and then solving elliptic equations for $u_j$, may satisfy predetermined Neumann conditions. To remedy this problem, we have to leave open the possibility that $\partial_\nu v_0$ may not be the measured $\partial_\nu (v+v_0)$; in other words $\partial_\nu v=g$ may not vanish. 

\medskip

{\bf OLD OLD}

\medskip

Note that $v_0$ and $v_0+v$ and their first derivatives need to agree on $\partial X$ to uniquely solve \eqref{eq:pdlinsyst2}. Since $\mF$ is a second-order differential operator, it may be difficult to find an initial guess $v_0$ satisfying the above hypotheses. We therefore replace the above equation for $v_0$ by
\begin{displaymath}
  \mA^t\mF(v_0) = \mA^t \mH_0\quad \mbox{ with } v_0 \mbox{ and } \partial_\nu v_0 \mbox{ prescribed on } \partial X.
\end{displaymath}
This is a fourth-order problem with appropriate Dirichlet conditions.

Consider the scheme introduced in \eqref{eq:pdlinsyst2} or \eqref{eq:elim2nd}, which we recast as $\mA v=\mS$. Then $\mF'(v_0)$ is an operator from $\mX=W^l_p(X)\times W^{l+1}_p(X;\Rm^J)$ to $\mY=W^l_p(X;\Rm^{2J})$, which to $\mS$ associates $v$ solution of
\begin{equation}
\label{eq:vquad}
 \mA^t\mA v = \mA^t \mS\quad\mbox{ in }\, X,\qquad v=\partial_\nu v =0 \quad\mbox{ on } \, \partial X.
\end{equation}
We have presented sufficient conditions in the preceding section for the above problem to be uniquely solvable and hence for $(\mF'(v_0))^{-1}$ to be bounded from $\mY$ to $\mX$.

We then find that $\mF'(v)$ is invertible from $\mY$ to $\mX$ for $\|v-v_0\|_{\mX}$ sufficiently small so that the nonlinear iteration schemes in \eqref{eq:contraction} or \eqref{eq:Newton} converge. 
This is because in the setting of power density measurements, elliptic regularity for $v_0\in \mX$ shows that $\delta v\in\mX$ as well provided that $\delta H\in \mY$. More precisely, for $\gamma\in W^l_{p}(X)$, then $u_j\in W^{l+1}_p(X;\Rm^J)$ by elliptic regularity of \eqref{eq:ellj} \cite{S-JSM-73} and then $(\delta\gamma,\{\delta u_j\})\in\mX$ by elliptic regularity of either \eqref{eq:pdlinsyst2} or \eqref{eq:elim2nd} since power density measurements for $\gamma\in W^l_p(X)$ and $u\in W^{l+1}_p(X)$ of the form $\gamma|\nabla u|^2$ are indeed in $W^l_p(X)$.

When one of the unique continuation results applies (which may require that $l$ above be quite large so that coefficients are sufficiently smooth; see the proof of Theorem \ref{thm:calderonsystem:app} in the appendix), then the nonlinear hybrid inverse problem may be solved locally using \eqref{eq:contraction} or \eqref{eq:Newton}. Indeed, for \eqref{eq:Newton}, $v^{k+1}-v^k$ belongs to $\mX$ as soon as $(\mF'(v_0))^{-1}$ is bounded from $\mY$ to $\mX$ and $\mH-\mH_0$ is sufficiently small in $\mY$. This provides a local uniqueness result and an explicit reconstruction procedure for the power density measurement hybrid inverse problem.

We refer the reader to \cite{BNSS-JIIP-13} for the implementation of a similar scheme that displays very fast convergence rates but for which we are unable to prove a unique continuation principle.
\subsection{Injectivity results}
\label{sec:injectivity}

The fixed point algorithm of the preceding section provides us with an explicit local reconstruction procedure for the nonlinear hybrid inverse problem. A similar methodology allows us to obtain local uniqueness results that are more general but not constructive. Using the same notation as in the preceding section, let us assume that 
\begin{equation}
\label{eq:uniqF}
\mF(v_0+v) = \mF(v_0) = \mH,
\end{equation}
in other words the measurements associated with $v_0$ and $v_0+v$ are identical. Injectivity of the nonlinear problem $\mF$ locally in the vicinity of $v_0$ then means proving that $v=0$.

Since $\mF$ is a polynomial in $v$ (or more generally a differentiable function in $v$), then there is a polynomial function in $v$ and $\tilde v$ such that 
\begin{equation}
\label{eq:diffF}
\mF(v_0+\tilde v) - \mF(v_0+v) = \mG(v,\tilde v) \cdot (\tilde v -v).
\end{equation}

When $\tilde v-v$ is sufficiently small, then $\mG(v,\tilde v)$ satisfies the same properties as $\mF'(v)$, the Fr\'echet derivative of $\mF$. For instance, the ellipticity and unique continuation properties of $\mF'(v)$ still hold for $\mG(v,\tilde v)$ for $v-\tilde v$ small in the appropriate topology. As a consequence, we obtain that \eqref{eq:uniqF} implies that $v=0$ and hence that $\mF$ is injective. However, $\mG(v,\tilde v)$ may still admit a left-inverse for $v$ and $\tilde v$ not necessarily close to each-other, in which case we also deduce that $v=\tilde v$ for $v-\tilde v$ not necessarily small.

Let us assume more generally that we have two measurements
\begin{displaymath}
   \mF(v_0+v) = \mH,\qquad \mF(v_0+\tilde v) = \tilde\mH.
\end{displaymath}
Assume that we have been able to prove that $\mF'(v_0)$ is elliptic and injective. Then, $\mG(v,\tilde v)$ is elliptic and hence admits a left-inverse (since it is injective), at least for $v$ and $\tilde v$ sufficiently close to $v_0$. As a consequence, if we assume that the boundary conditions that need to be imposed for $v$ and $v_0$ agree on $\partial X$, then we obtain the stability estimate
\begin{equation}
\label{eq:stabnonlinear}
  \|v-\tilde v\|_{\mX} \leq C \|\mH-\tilde\mH\|_{\mY}.
\end{equation}
Errors in the boundary conditions $\mB(v)$ or $\mD(v)$ may be accounted for in a similar manner. Using the notation in section \ref{sec:param}, this is
\begin{equation}
\label{eq:stabnonlin2}
\dsum_{j=1}^{J+M} \|v_j-\tilde v_j\|_{W_p^{l+t_j}(X)} \leq C  \dsum_{i=1}^{2J} \|\mH_i-\mH_{0,i}\|_{W_p^{l-s_i}(X)}.
\end{equation}

Note that the nonlinear problem \eqref{eq:uniqF} is invertible generically. Indeed, for $v_0$ analytic, the inversion of $\mA^t\mA$ augmented with Dirichlet boundary conditions $\mD$ is guaranteed. As a consequence, the above result shows that \eqref{eq:uniqF} may be inverted for $(v+v_0)$ in an open set including $v_0$. Since analytic coefficients $v_0$  in $X_0$ restricted to $X$ are dense in the set of sufficiently smooth coefficients on $X$, we obtain that the inverse problem may be inverted {\em generically}; see \cite{SU-JFA-09}. 

When $v_0$ is not analytic, then we need to find a unique continuation principle based on Carleman estimates; see the paragraph on applications to power density measurements in section \ref{sec:fixedpoint}, which extends to the setting of injectivity results and not-necessarily local stability estimates of the form \eqref{eq:stabnonlinear}.

}

\section*{Acknowledgment} 
I would like to thank the organizers of the Irvine conference for their invitation to a meeting that was a befitting tribute to the exceptional contributions to analysis and inverse problems of Gunther Uhlmann. I am glad to acknowledge many insightful discussions with Shari Moskow on the content of section 4, which borrows some ideas of our joint work in \cite{BM-LINUMOT-13}. I am also indebted to  Thomas Widlak for noticing and proposing solutions to inconsistencies in the presentation of the Lopatinskii conditions in section 2.6.2.
This paper was partially funded by NSF grant DMS-1108608 and AFOSR Grant NSSEFF- FA9550-10-1-0194.

\appendix
\section{Unique continuation for redundant systems}
\label{sec:uniqCalderon}

\subsection{Redundant system for a scalar function}

Let $x$ be a point in $\Rm^{n+1}$ and $N$ be a unit vector equal to $(0,\ldots,0,1)$ in an appropriate system of coordinates. We want to address the uniqueness of the Cauchy problem. Assume that $L_q$ for $1\leq q\leq Q$ are operators of order $m$ and that
\begin{equation}
\label{eq:redundsyst:app}
 L_q u =0 \mbox{ in } V\cap \{x_{n+1}>0\},\qquad \partial^j_{n+1} u=0 \mbox{ on } V\cap \{x_{n+1}=0\},
\end{equation}
for $1\leq q\leq Q$ and $1\leq j\leq m-1$, 
where $V$ is a neighborhood of $0$. Here, $u$ is a {\em scalar} function solution of a {\em redundant} system of equations. We assume that $N$ is non characteristic for all $L_q$. This implies, as in section \ref{sec:lur}, that all derivatives of $u$ vanish on $\{x_{n+1}=0\}$ and that $u$ can be extended by $0$ on $V\cap \{x_{n+1}<0\}$. The uniqueness problem may therefore be recast as: if $u$ satisfies
\begin{equation}
\label{eq:UCPsyst:app}
 L_q u =0 \mbox{ in } V \quad \mbox{ for } 1\leq q\leq Q\qquad \mbox{ and } \qquad u=0 \mbox{ in } \{x_{n+1}<0\},
\end{equation}
then $u\equiv0$ in a full neighborhood of $0$. 

When $Q=1$, it is known that $L=L_1$ needs to satisfy some restrictive assumptions in order for the result to hold; see \cite{H-SP-63,L-AM-57,N-AMS-73,Z-Birk-83}. The main advantage of the redundancy in the above system is that such assumptions need to be valid only locally (in the Fourier domain) for each operator, and globally collectively in a sense defined below. Our derivations closely follow the presentation in \cite{N-AMS-73}, to which we refer to additional details.

Changing notation, we define $t=x_{n+1}$ and still call $x=(x_1,\ldots,x_n)$. We then define $p_q=p_q(x,t,\xi,\tau)$ as the principal symbol of $L_q$ for this choice of coordinates. Sufficient conditions for the uniqueness to the above Cauchy problem involve the properties of the roots $\tau$ of the above polynomials $\tau\mapsto p_q(\cdot,\tau)$ as a function of $\xi$. In the setting of redundant measurements, these conditions may hold for different values of $q$ depending on the value of $\xi$. This justifies the following definition. 

We assume the existence of a a finite covering $\{\Omega_\nu\}$  of the unit sphere $\Sm^{n-1}$ (corresponding to $|\xi|=1$) such that the following holds. For each $\nu$, there exists $q=q(\nu)$ and $\eps>0$ such that for each $(x,t)$ close to $0$ and each $\xi\in\Omega_\nu$, we have
\begin{equation*}
\label{eq:condCalderon:app}
\begin{array}{rl}
(i) & \tau\to p_q(x,t,\xi,\tau) \mbox{  has at most simple real roots and at most double complex roots} \\
(ii) & \mbox{distinct roots $\tau_1$ and $\tau_2$ satisfy } |\tau_1-\tau_2|\geq\eps>0\\
(iii) & \mbox{non-real roots $\tau$ satisfy } |\Im \tau|\geq\eps.
\end{array}
\end{equation*}
Then we have the following result:
\begin{theorem}
\label{thm:calderonsystem:app}[Calder\'on's result for redundant systems.]
Assume that $N$ is non characteristic for the operators $L_q$ at the origin and that for a finite covering $\{\Omega_\nu\}$ of the unit sphere, (i)-(ii)-(iii) above are satisfied.   Then \eqref{eq:UCPsyst:app} implies that $u=0$ in a full neighborhood of $0$. 
\end{theorem}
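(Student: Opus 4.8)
The plan is to deduce the uniqueness from a Carleman estimate that uses all of the operators $L_q$ simultaneously, following the single-operator argument of Theorem 5 in \cite{N-AMS-73} but microlocalized in the frequency variable $\xi$ according to the covering $\{\Omega_\nu\}$. First I would record the reduction already contained in \eqref{eq:UCPsyst:app}: since $N$ is non-characteristic for each $L_q$ and the Cauchy data vanish, all derivatives of $u$ vanish on $\{t=0\}$, so $u$ extends by zero to $\{t<0\}$ and satisfies $L_q u=0$ in a full neighborhood $V$ of the origin. It then suffices to produce a weight $\phi$, with level surfaces suitably convex toward $\{t<0\}$, and a Carleman inequality of the form
\begin{equation}
\label{eq:carleman-goal}
\sum_{|\alpha|\leq m-1}\tau^{2(m-|\alpha|)-1}\int e^{2\tau\phi}|D^\alpha w|^2\,dx\,dt \leq C\sum_{q=1}^Q\int e^{2\tau\phi}|L_q w|^2\,dx\,dt,
\end{equation}
valid for all $w$ supported in a small neighborhood of $0$ and all $\tau\geq\tau_0$. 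Applying \eqref{eq:carleman-goal} to $w=\chi u$ for a cutoff $\chi$ equal to $1$ near $0$, the right-hand side is supported in $\mathrm{supp}\,\nabla\chi$, where $\phi$ is strictly smaller than near the origin; letting $\tau\to\infty$ then forces $u=0$ in a neighborhood of $0$, exactly as in \cite{N-AMS-73}.

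The heart of the argument is the proof of \eqref{eq:carleman-goal}, and this is where the redundancy enters. I would take a smooth partition of unity $\{\psi_\nu\}$ on the sphere $\Sm^{n-1}$ subordinate to $\{\Omega_\nu\}$, extend each $\psi_\nu$ to a symbol homogeneous of degree $0$ in $\xi$, and form the associated pseudodifferential cutoffs $\Psi_\nu=\psi_\nu(D_x)$ together with a low-frequency cutoff, so that $I=\sum_\nu\Psi_\nu+R$ with $R$ smoothing. For each fixed $\nu$, the operator $L_{q(\nu)}$ satisfies conditions {\em (i)-(ii)-(iii)} uniformly for $\xi\in\Omega_\nu$, so the classical single-operator Calder\'on estimate of \cite{N-AMS-73} applies to $\Psi_\nu w$ and bounds $\sum_{|\alpha|\leq m-1}\tau^{2(m-|\alpha|)-1}\|e^{\tau\phi}D^\alpha\Psi_\nu w\|^2$ by $C\|e^{\tau\phi}L_{q(\nu)}\Psi_\nu w\|^2$. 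Writing $L_{q(\nu)}\Psi_\nu w=\Psi_\nu L_{q(\nu)}w+[L_{q(\nu)},\Psi_\nu]w$ and summing over the finitely many $\nu$, the principal terms assemble, via $\sum_\nu\Psi_\nu\approx I$, into the left-hand side of \eqref{eq:carleman-goal} and into $\sum_q\|e^{\tau\phi}L_q w\|^2$ on the right.

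The main obstacle is controlling the commutators $[L_{q(\nu)},\Psi_\nu]$ and the frequency-cutoff errors uniformly in $\tau$. These commutators have order $m-1$, one lower than $L_{q(\nu)}$, so in the $\tau$-weighted (semiclassical) Carleman calculus they gain a factor $\tau^{-1}$ relative to the leading terms appearing in \eqref{eq:carleman-goal}; hence for $\tau$ large they are absorbed into the left-hand side. Making this rigorous requires working in the Carleman calculus with large parameter, verifying that the single-operator estimate holds uniformly on each conic patch $\Omega_\nu$ (which is where {\em (i)-(ii)-(iii)} enter quantitatively, through the root separation $\eps$), and checking that conjugation of the $\Psi_\nu$ by $e^{\tau\phi}$ yields operators bounded uniformly in $\tau$ so that the patches combine without losing the gain in $\tau$. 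Once \eqref{eq:carleman-goal} is established, the uniqueness conclusion follows from the cutoff argument described above.
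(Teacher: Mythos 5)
Your overall strategy coincides with the paper's: reduce to a single Carleman inequality involving all the $L_q$ at once, prove it via a microlocal partition of unity subordinate to the covering $\{\Omega_\nu\}$ with the selected operator $L_{q(\nu)}$ doing the work on the patch $\Omega_\nu$, and absorb the commutators $[L_{q(\nu)},\Psi_\nu]$, which are one order lower, by taking the large parameter large. The differences of form (weight $e^{2\tau\phi}$ versus Nirenberg's $e^{k(T-t)^2}$, which depends only on $t$ and hence commutes with the tangential cutoffs, so your worry about conjugating $\Psi_\nu$ by the weight disappears if you adopt it; $|\alpha|\leq m-1$ with explicit $\tau$-powers versus the paper's $|||D^\alpha u|||$ with constant $C(k^{-1}+T^2)$) are cosmetic. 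The one place where your argument is not yet a proof is the sentence asserting that ``the classical single-operator Calder\'on estimate of \cite{N-AMS-73} applies to $\Psi_\nu w$'': the hypotheses of that estimate are conditions on the roots of $p_{q(\nu)}(x,t,\xi,\tau)$ for \emph{all} $\xi$ on the unit sphere, whereas here they are assumed only for $\xi\in\Omega_\nu$, so the single-operator theorem cannot be invoked as a black box on a microlocalized function; the estimate must be re-proved for functions with frequency support in the cone over $\Omega_\nu$. This is precisely where the paper does its work: it first performs Calder\'on's reduction of $L_qu=0$ to a first-order $m\times m$ pseudodifferential system $(D_t+H_q)U=\cdots$ with $\det(\tau I+h_q)=p_q$, uses (i) to put $h_{q(\nu)}$ in Jordan form locally in $\xi$, and then constructs a \emph{modified} symbol $h_\nu$ that agrees with $h_{q(\nu)}$ on the support of the cutoff in $\Omega_\nu$ but is \emph{globally} reducible to Jordan canonical form, so that the elementary one-dimensional estimates for $D_t+J_\nu$ (Lemma 2 of \cite{N-AMS-73}, where (ii)-(iii) enter through the separation constant $\eps$) apply to $V_\nu=R_\nu\Phi_\nu U$. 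Your plan can be completed along the same lines, but the extension of the ``good'' symbol from $\Omega_\nu$ to the whole sphere --- which is natural at the level of the first-order system and much less so at the level of the $m$-th order scalar operator --- has to be carried out explicitly; as written, the per-patch estimate is asserted rather than proved.
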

\begin{proof}
 The proof very closely follows that of Theorem 5 in \cite{N-AMS-73}. 
 Note that the proof there requires all coefficients to be of class $C^\infty$. It has been shown in \cite{H-MS-59} (see also \cite[Notes of Chapter 2]{Z-Birk-83}) that such results also hold under less restrictive assumptions on smoothness. We do not track such results and assume that the coefficients are sufficiently smooth so that the technology developed in  \cite{N-AMS-73} applies.
 
 The differences between the proof of our theorem and that in \cite{N-AMS-73} are mostly notational. To present these differences in a somewhat self-contained manner, we need to introduce  most of the notation in \cite{N-AMS-73}, to which we refer the reader for details that are not reproduced here.

 Let $u(x,t)$ be of class $C^m$ with support contained in $|x|\leq r$ and $0\leq t\leq T$. Define $w=e^{k(T-t)^2}$. Then there is a constant $C$ independent of $u$ such that for $r$, $T$, and $k^{-1}$ sufficiently small, we have the Carleman-type inequality
 \begin{equation}
\label{eq:carleman:app}
 \dsum_{|\alpha|<m} |||D^\alpha u|||^2 \leq C (k^{-1}+T^2) \dsum_{q=1}^Q ||| L_qu |||^2, 
\end{equation}
where we have defined
\begin{equation}\label{eq:deftn:app}
  ||| u |||^2 = \dint_0^T w(t) \|u\|^2 dt,\qquad w(t) = e^{k(T-t)^2},
\end{equation}
 with $\|\cdot\|$ the $L^2$ norm in the variable $x$.  It is shown in \cite[p.31]{N-AMS-73} that the theorem follows from the above inequality, which we now prove.
 
 Since $N$ is non characteristic for $p_q$, the leading term of $p_q$ in $\tau^m$ does not vanish and we may divide $p_q$ by such a term to consider leading symbols (still called $p_q$) of the form
 \begin{displaymath}
 p_q(x,t,\xi,\tau) = \tau^m + \dsum_{j=1}^m Q_{qj}(x,t,\xi) \tau^{m-j},
\end{displaymath}
with $Q_{qj}$ homogeneous polynomials in $\xi$ of degree $j$.

Let $\Lambda=(1-\Delta_{x})^{\frac12}$ be the operator with symbol $(1+|\xi|^2)^{\frac12}$. For $u$ with (small) compact support, we define 
\begin{displaymath}
  u_j = \Lambda^{m-j} D_t^{j-1} u,\quad 1\leq j\leq m.
\end{displaymath}
Let then $U=(u_1,\ldots,u_m)^t$. We may recast $L_q u=f_q$ as 
\begin{displaymath}
\begin{array}{l}
D_t u_j-\Lambda u_{j+1} =0 ,\quad j<m \\
D_t u_m + \dsum_{j=1}^m Q_{qj}(x,t,D_x) \Lambda^{1-j} u_{m-j+1} + R_q U  = f_q,
\end{array}
\end{displaymath}
for $R_q$ a pseudo-differential of order $0$ in $x$ while $Q_{qj}(x,t,D_x) \Lambda^{1-j}$ is a pseudo-differential of order $1$ in $x$. The leading symbol of the above system for each $q$ is therefore a $m\times m$ matrix $\tau I+h_q$ with $h_q$ homogeneous of degree $1$ in $\xi$. We denote by $H_q(t,x,D_x)$ the pseudo-differential operator with symbol $h_q(t,x,\xi)$. Note that ${\rm Det}(\tau I+h_q)=p_q$ so that the roots of the system $\tau I+h_q$ are those of $p_q$.

From the derivation in \cite[p.33 \& p.35]{N-AMS-73}, \eqref{eq:carleman:app} follows (for $k^{-1}$ and $T^2$ sufficiently small) from
\begin{equation}
\label{eq:carleman2:app}
|||U|||^2 \leq C (k^{-1}+T^2) \dsum_{q=1}^Q |||(D_t+H_q)U|||^2 + C\dint_0^T w(t) \|U\|^2_{-1} dt.
\end{equation}
It thus remains to analyze the terms $\|(D_t+H_q)U\|$. The main idea of Calder\'on in \cite{C-AJM-58,C-PSFD-62} is to diagonalize $h_q$ and analyze the corresponding diagonalized first-order system. For $\xi$ unit vectors close to some $\xi_0$, we find a non-singular smooth $m\times m$ matrix $r_q(x,t,\xi)$ such that for all $(x,t)$ close to $0$, 
\begin{displaymath}
  r_q h_q r_q^{-1} = J_q,
\end{displaymath}
with $J_q$ in Jordan form. From the assumption (i), we obtain that for $q=q(\nu)$, then $J_q$ is either a $1\times 1$ matrix $\lambda(x,t,\xi)$ or a $2\times 2$ matrix with $\lambda(x,t,\xi)$ on the diagonal, $0$ on the (2,1) entry, and $1$ on the (1,2) entry. Once $r_q$ is defined for $|\xi|=1$, we extend it smoothly to all $\xi$ as a homogeneous function of degree $0$. This may be done locally on cones in $|\xi|\geq1$ but not necessarily globally. Following \cite[p.34]{N-AMS-73}, the global obstructions may be overcome by replacing our finite covering $\{\Omega_\nu\}$ by a finer (sub)covering, which we still call $\{\Omega_\nu\}$ such that for each $\xi\in\Omega_\nu$, there is a non-singular smooth matrix $r_{q\nu}$ such that
\begin{displaymath}
   r_{q\nu} h_q r_{q\nu}^{-1} \mbox{ is in Jordan canonical form.} 
\end{displaymath}

We then define $\phi_\nu(\xi)$ a $C^\infty$ partition of unity subordinate to $\Omega_\nu$ such that  $\sum_{\nu} \phi^2_\nu(\xi)=1$ for $|\xi|=1$ and such that each $\phi_\nu$ is extended to $|\xi|\geq1$ smoothly as a homogeneous function of degree $0$ and such that $\sum_{\nu} \phi^2_\nu(\xi)=1$ for $|\xi|>1$. Finally, we define $\Phi_\nu$ as the corresponding pseudo-differential operator of order $0$ and $U_\nu:=\Phi_\nu U$.

Since $[\Phi_\nu,H_q]$ is an operator of order $0$, we verify as in \cite[p.35]{N-AMS-73} that \eqref{eq:carleman2:app} is a consequence of
\begin{equation}
\label{eq:carleman3:app}
||| U_\nu|||^2 \leq C (k^{-1}+T^2) ||| (D_t+H_{q(\nu)})U_\nu |||^2 + C\dint_0^T w(t) \|U\|^2_{-1} dt,
\end{equation}
which is to hold for each $\nu$. Note that the summation over $q$ in \eqref{eq:carleman2:app} is a clear consequence of us obtaining the above inequality for $q=q(\nu)$. For the rest of the derivation, $q=q(\nu)$.

Now, $H_q U_\nu$ is unchanged if $r_{q\nu}$ and $j_{q\nu}$ are changed outside of the support of $\phi_\nu$. We thus construct symbols $h_{\nu}$ that agree with $h_q$ on $\Omega_\nu$ but that are globally reducible to a Jordan canonical form.  Each $h_{\nu}$ is then extended to all $\xi$ by a smooth function still called $h_{\nu}$ that is homogeneous of degree $1$ for $|\xi|\geq1$. We thus have globally defined symbols such that $r_{\nu} h_{\nu} r_{\nu}^{-1}= j_{\nu}$. We denote by $H_{\nu}$, $J_{\nu}$, $R_{\nu}$, and $S_{\nu}$ the pseudo-differential operators of order $1$, $1$, $0$, $0$, respectively, corresponding to the symbols $h_{\nu}$, $j_{\nu}$, $r_{\nu}$, and $r^{-1}_{\nu}$. 

We finally define $V_\nu=R_\nu U_\nu$. The rest of the proof is then as in \cite{N-AMS-73}. Following \cite[p.35]{N-AMS-73}, we deduce that \eqref{eq:carleman3:app} is a consequence of 
\begin{equation}
\label{eq:carleman4:app}
||| V_\nu ||| ^2 \leq C(k^{-1}+T^2) |||(D_t+J_\nu)V_\nu|||^2.
\end{equation}
Now $J_\nu$ obtained from $p_{q(\nu)}$ for $\xi\in \Omega_\nu$ has diagonal entries of the form $A(t)+iB(t)$ with $A$ and $B$ pseudo-differential operators of order $1$ and $B$ elliptic or vanishing when $J_\nu$ is a $1\times1$ matrix. Thus \eqref{eq:carleman4:app} is a consequence of \cite[Lemma 2]{N-AMS-73} and this concludes the proof of the theorem.
\end{proof}

\subsection{Redundant system for vector-valued functions}

The proof of the above theorem provides a local control encapsulated in \eqref{eq:carleman:app}. Above, $u$ is a scalar function. Such results may be extended to systems when $u$ is vector-valued provided that the leading term of the system is diagonal. Indeed, \eqref{eq:carleman:app} may then be used component by component to provide a uniqueness result for the system. We now consider such an extension. 

Consider operators $L_{qr}$ for $1\leq q\leq Q$ and $1\leq r\leq R$ and the system
\begin{equation}
\label{eq:redundsyst2:app}
L_{qr} u_r =0 \,\,\mbox{ in } V\cap \{ x_{n+1}>0\},\qquad \partial^j_{n+1}u_r =0  \,\,\mbox{ on } V\cap \{x_{n+1}=0\},
\end{equation}
with summation over repeated indexes and holding for all $1\leq q\leq Q$ in the first equation and for all $1\leq j\leq m-1$ and all $1\leq r\leq R$ in the second equation. Here $V$ is still a neighborhood of $0$. We still call $t=x_{n+1}$ and $x=(x_1,\ldots,x_n)$ in an appropriate system with $N$ as before. We denote by $p_{qr}=p_{qr}(x,t,\xi,\tau)$ the principal symbol of $L_{qr}$ for this choice of coordinates. As above, this uniqueness problem may be recast as: Assume that
\begin{equation}
\label{eq:UCPredsyst:app}
 L_{qr} u_r =0 \mbox{ in } V \quad \mbox{ for } 1\leq q\leq Q\qquad \mbox{ and } \qquad u_r=0 \mbox{ in } \{x_{n+1}<0\},
\end{equation}
for $1\leq r\leq R$. Then $u_r\equiv0$ in a full neighborhood of $0$. 

We sort the equations for $1\leq q\leq Q$ such that the following holds. We assume that $\{1,\ldots,Q\}$ is decomposed into $R$ connected components $I_r$ of the form $l(r-1)+1\leq q\leq l(r)$ for a strictly increasing function $l(r)$ from $\{1,\ldots,R\}$ to $\Nm^*$ with $l(0):=-1$ and $l(R)=Q$. Then we assume that $p_{qr}$ is of order $m$ for $q\in I_r$ and of order at most $m-1$ otherwise. When $Q=R$, then $p_{qr}$ forms a determined systems with diagonal entries of order $m$ and extra-diagonal entries of order at most $m-1$. More generally, the matrix $p_{qr}$ is composed of operators of order $m$ that are block-diagonal with blocks of size $I_r\times 1$ and of other operators of order at most $m-1$. 

Then, as we did before, we assume the existence of a finite covering $\{\Omega_\nu\}$ of the unit sphere $\Sm^{n-1}$ such that the following holds. For each $\nu$ and $r$, there exists a $q=q(\nu,r)\in I_r$ such that for each $(x,t)$ close to $0$ and each $\xi\in\Omega_\nu$, we have
\begin{equation*}
\label{eq:condCalderonred:app}
\begin{array}{rl}
(i) & p_{qr}(x,t,\xi,\tau) \mbox{  has at most simple real roots and at most double complex roots \hspace{0cm}} \\
(ii) & \mbox{distinct roots $\tau_1$ and $\tau_2$ satisfy } |\tau_1-\tau_2|\geq\eps>0\\
(iii) & \mbox{non-real roots $\tau$ satisfy } |\Im \tau|\geq\eps.
\end{array}\hspace{-0cm}
\end{equation*}

Then we have the following result:
\begin{theorem}
\label{thm:calderonsystemred:app}[Calder\'ons result for Redundant systems.]
Assume that $N$ is non characteristic for the operators $L_{qr}$ for $q\in I_r$ at the origin and that for a finite covering $\{\Omega_\nu\}$ of the unit sphere, (i)-(ii)-(iii) above are satisfied.   Then \eqref{eq:UCPsyst:app} implies that $u=0$ in a full neighborhood of $0$. 
\end{theorem}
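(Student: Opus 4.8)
The plan is to reduce the vector-valued statement to the scalar Carleman estimate \eqref{eq:carleman:app} established in the proof of Theorem \ref{thm:calderonsystem:app}, exploiting the block-diagonal structure of the leading part of $(L_{qr})$. First I would observe that, since $N$ is non characteristic for each diagonal operator $L_{qr}$ with $q\in I_r$ and the Cauchy data $\partial^j_{n+1}u_r$ vanish for $0\le j\le m-1$, every derivative of $u_r$ vanishes on $\{x_{n+1}=0\}$, so each component extends by $0$ to $\{x_{n+1}<0\}$ and \eqref{eq:UCPredsyst:app} holds in a full neighborhood $V$ of the origin. For a fixed component $r$, the family $\{L_{qr}\}_{q\in I_r}$ is a \emph{redundant scalar system} acting on the single unknown $u_r$, and the covering hypothesis --- for each $\nu$ there is $q=q(\nu,r)\in I_r$ satisfying (i)-(ii)-(iii) on $\Omega_\nu$ --- is precisely the hypothesis of Theorem \ref{thm:calderonsystem:app} applied to that subsystem.

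Consequently, the scalar Carleman inequality \eqref{eq:carleman:app} applies to each $u_r$ with the diagonal block as the collection of test operators, giving, for $r,T,k^{-1}$ small and a constant $C$ independent of $u$,
\[
\sum_{|\alpha|<m}|||D^\alpha u_r|||^2 \le C(k^{-1}+T^2)\sum_{q\in I_r}|||L_{qr}u_r|||^2.
\]
Next I would use the equations themselves. For $q\in I_r$ the relation $\sum_{r'} L_{qr'}u_{r'}=0$ gives $L_{qr}u_r = -\sum_{r'\ne r} L_{qr'}u_{r'}$, and by hypothesis each off-diagonal operator $L_{qr'}$ with $r'\ne r$ has order at most $m-1$, so $|||L_{qr'}u_{r'}|||^2 \le C\sum_{|\alpha|\le m-1}|||D^\alpha u_{r'}|||^2$. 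Summing the displayed inequality over $1\le r\le R$ and inserting these bounds yields
\[
\sum_{r=1}^R\sum_{|\alpha|<m}|||D^\alpha u_r|||^2 \le C(k^{-1}+T^2)\sum_{r'=1}^R\sum_{|\alpha|<m}|||D^\alpha u_{r'}|||^2,
\]
where I have used that $|\alpha|<m$ is the same constraint as $|\alpha|\le m-1$ and that, for each $r'$, only finitely many pairs $(q,r)$ with $q\in I_r$ and $r\ne r'$ occur.

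Finally, choosing $k^{-1}+T^2$ small enough that $C(k^{-1}+T^2)<1$ lets me absorb the right-hand side into the left, forcing $\sum_{r}\sum_{|\alpha|<m}|||D^\alpha u_r|||^2=0$ on $0\le t\le T$; hence every $u_r$ vanishes near the origin, exactly as in the concluding step of Theorem \ref{thm:calderonsystem:app} (see \cite[p.31]{N-AMS-73}). The main obstacle --- and the reason the diagonal-dominance hypothesis is indispensable --- is precisely this absorption: it works only because the coupling operators $L_{qr'}$ with $r'\ne r$ are of order strictly less than $m$, so that after substitution they contribute only norms of derivatives of order $\le m-1$, the very quantity already controlled on the left; were an off-diagonal entry of full order $m$, its contribution could not be absorbed by the small factor $k^{-1}+T^2$ and the scheme would break down. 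The remaining technical points --- the negative-order remainder appearing in \eqref{eq:carleman2:app}, the smoothness of coefficients required to diagonalize each symbol $h_{q(\nu,r)}$ into Jordan form, and the partition-of-unity and microlocalization bookkeeping --- are inherited verbatim from the scalar proof and need only be applied component by component.
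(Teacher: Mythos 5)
Your proposal follows essentially the same route as the paper: the paper's entire proof consists of asserting the system Carleman estimate \eqref{eq:carlemanred:app} and leaving the details to the reader, and your blockwise application of the scalar estimate \eqref{eq:carleman:app} to each diagonal family $\{L_{qr}\}_{q\in I_r}$, followed by substitution of the equations $L_{qr}u_r=-\sum_{r'\neq r}L_{qr'}u_{r'}$ and absorption of the lower-order off-diagonal couplings, is exactly the intended argument (it mirrors the absorption carried out explicitly in the proof of Theorem \ref{thm:2by2system:app}). The one imprecision is in your last step: the Carleman inequality is an a priori estimate for functions compactly supported in $\{|x|\leq r,\ 0\leq t\leq T\}$, and the extended solution does not vanish near $t=T$, so the absorption must be performed at the level of that a priori estimate (yielding the analogue of \eqref{eq:carleman2by2:app} with $|||Lu|||^2$ on the right), after which one applies it to the truncated function $\zeta v$ and lets $k\to\infty$ as in \cite[p.31]{N-AMS-73}; the conclusion is not that the weighted norms vanish on all of $0\leq t\leq T$, only that $v=0$ for $t\leq T/2$. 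Since you cite precisely that concluding step, this is a matter of phrasing rather than a missing idea.
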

\begin{proof}
The proof is similar to that of the preceding theorem. It is based on the following Carleman estimate
\begin{equation}
\label{eq:carlemanred:app}
 \dsum_{r=1}^R \dsum_{|\alpha|<m} |||D^\alpha u_r|||^2 \leq C (k^{-1}+T^2) \dsum_{q=1}^Q \dsum_{r=1}^R||| L_{qr}u_r |||^2, 
\end{equation}
whose proof is the same as that of \eqref{eq:carleman:app}. We leave the details to the reader.
\end{proof}
\subsection{Redundant systems in upper-triangular form}

In some applications, the leading term in the system may not be diagonal (or block-diagonal) but rather upper-triangular. In such a setting, unique continuation properties may still be valid provided that (a sufficiently large number of) the diagonal operators are elliptic. However, the corresponding complex roots may not longer be double. We have the following result, which as above, is stated for possibly redundant systems:
\begin{theorem}
\label{thm:calderonelliptic:app}
Assume that $N$ is non characteristic for the operators $L_q$ at the origin and that there exists a finite covering $\{\Omega_\nu\}$ of the unit sphere such that for each $\nu$, there exists $q=q(\nu)$ such that 
\begin{displaymath}
 p_q(x,t,\xi,\tau) \mbox{ has at most simple complex roots $\tau$ satisfying } |\Im \tau|\geq\eps
\end{displaymath}
for some $\eps>0$. Then there exists a constant $C$ independent of $u$ such that for $k^{-1}$ and $T$ sufficiently small, we have
\begin{equation}
\label{eq:carlemanelliptic:app}
  \dsum_{|\alpha|\leq m} |||D^\alpha u|||^2 \leq C (k^{-1}+T^2) \dsum_{q=1}^Q ||| L_qu |||^2.
\end{equation}
\end{theorem}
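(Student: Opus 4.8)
The plan is to follow the proof of Theorem \ref{thm:calderonsystem:app} almost verbatim, the entire content again being the Carleman estimate \eqref{eq:carlemanelliptic:app}; only the very last scalar step changes, and it changes in exactly the way that lets us gain one full derivative (the left-hand sum now runs over $|\alpha|\le m$ rather than $|\alpha|<m$). First I would perform the same first-order reduction: since $N$ is non characteristic, divide each $p_q$ by its leading coefficient in $\tau^m$, set $u_j=\Lambda^{m-j}D_t^{j-1}u$ with $\Lambda=(1-\Delta_x)^{1/2}$, form $U=(u_1,\ldots,u_m)^t$, and rewrite $L_qu=f_q$ as $(D_t+H_q)U+R_qU=(0,\ldots,f_q)^t$ with $H_q$ of order one and symbol $h_q$ satisfying $\det(\tau I+h_q)=p_q$, and $R_q$ of order zero. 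Then localize with a partition of unity $\{\Phi_\nu\}$ subordinate to the covering, set $U_\nu=\Phi_\nu U$, and reduce, exactly as in \cite[p.35]{N-AMS-73} and the previous proof, to a patchwise inequality for $q=q(\nu)$, the commutators $[\Phi_\nu,H_q]$ being of order zero and absorbed into a lower-order remainder $\int_0^T w\|U\|_{-1}^2\,dt$.

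The structural simplification provided by the hypothesis is the key point. Because the roots $\tau$ of $p_{q(\nu)}$ are all \emph{simple}, the matrix $h_{q(\nu)}$ is diagonalizable with distinct eigenvalues, so after conjugating by a smooth homogeneous $r_\nu$ (globalized on $\Omega_\nu$ as before) the Jordan form $J_\nu=r_\nu h_\nu r_\nu^{-1}$ is strictly diagonal: there are no $2\times2$ blocks. Each diagonal entry is a first-order symbol $\lambda(x,t,\xi)=A+iB$ whose imaginary part satisfies $B\ge\eps|\xi|$ for $|\xi|\ge1$, i.e. $B$ is elliptic and uniformly bounded away from zero; this is precisely the quantitative content of $|\Im\tau|\ge\eps$. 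Setting $V_\nu=R_\nu U_\nu$, the estimate \eqref{eq:carlemanelliptic:app} is thereby reduced to a collection of scalar first-order Carleman estimates, one for each diagonal entry, of the sharpened form
$$ |||\Lambda v|||^2 + |||D_t v|||^2 \le C(k^{-1}+T^2)\,|||(D_t+A+iB)v|||^2, $$
in which the extra derivative $\Lambda v$ now appears on the left.

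The main obstacle, and the single place where the argument departs from the double-root case treated by \cite[Lemma 2]{N-AMS-73}, is this sharpened scalar estimate. I would prove it by an energy computation against the weight $w=e^{k(T-t)^2}$: writing $g=(D_t+A+iB)v$ and expanding $|||g|||^2=|||D_tv|||^2+|||(A+iB)v|||^2+2\,\Re\,(D_tv,(A+iB)v)_w$, the cross term, after integrating by parts in $t$, produces the commutator of $D_t$ with the weight (of size $O(k)$, hence controlled by the $k^{-1}$ prefactor after rescaling) together with a contribution whose positivity comes from the ellipticity $B\ge\eps|\xi|$. Applying the sharp G\aa rding inequality to the nonnegative symbol $B-\tfrac{\eps}{2}\Lambda$ converts this into the controlling term $\eps|||\Lambda v|||^2$, up to order-zero errors that are absorbed into the right-hand side and the $\|U\|_{-1}$ remainder. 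The delicate points are purely quantitative: one must check that the G\aa rding error and the weight commutators are uniformly small for $T$ and $k^{-1}$ small, uniformly over the finite covering, and that reassembling the diagonal scalar estimates through $V_\nu=R_\nu U_\nu$ and $U_\nu=\Phi_\nu U$ recovers all derivatives of $u$ of order up to $m$. These bookkeeping steps are identical in spirit to those in the proof of Theorem \ref{thm:calderonsystem:app}, using $\det(\tau I+h_q)=p_q$ to identify the roots of the diagonalized system with those of $p_q$; I would leave them to the reader, as the paper does elsewhere.
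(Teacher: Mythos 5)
Your proposal is correct and follows essentially the same route as the paper: the first-order reduction, localization, and diagonalization are identical, and the hypothesis of simple roots with $|\Im \tau|\geq\eps$ is used exactly as in the paper to make $J_\nu$ strictly diagonal with elliptic imaginary part, so that the gain of the top-order derivatives $\Lambda V_\nu$, $D_tV_\nu$ comes from a scalar first-order estimate on each diagonal entry. The only difference is that the paper simply invokes inequality (7.12) of \cite{N-AMS-73} for that scalar estimate (quoted there with constant $C(1+kT^2)$) whereas you rederive it by an energy/G\aa rding argument; your claim of the prefactor $C(k^{-1}+T^2)$ for the top-order term is the one quantitative point you should verify carefully against the cited source, since it is what the absorption argument in Theorem \ref{thm:2by2system:app} actually requires.
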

\begin{proof}
  The main difference with respect to the preceding theorem is that we have a full control of the $m$th-order derivatives of $u$. We follow the proof of Theorem \ref{thm:calderonsystem:app} until the very last step. The additional control of $D^\alpha u$ for $|\alpha|=m$ comes from the control of $\Lambda U_\nu$ and $D_t U_\nu$, or equivalently, that of $\Lambda V_\nu$ and $D_t V_\nu$. Now, $h_{q(\nu)}$ is diagonalized with simple eigenvalues $\lambda(x,t,\xi)=-a(x,t,\xi)-ib(x,t,\xi)$. By assumption $b(x,t,\xi)$ is uniformly bounded away from $0$. We may then apply \cite[(7.12)]{N-AMS-73} to obtain that 
  \begin{displaymath}
   |||\Lambda V_\nu|||^2 + |||D_t V_\nu|||^2 \leq C(1+kT^2) ||| (D_t+J_{\nu})V_\nu|||^2.
\end{displaymath}
The proof of \eqref{eq:carlemanelliptic:app} follows.
\end{proof}
As an application of the preceding result, let us consider the following corollary:
\begin{theorem}
\label{thm:2by2system:app}
Consider the redundant system of equations
\begin{equation}
\label{eq:2by2syst:app}
\left(\begin{matrix} L_1 & L_0 \\ L_3 & L_2 \end{matrix} \right) 
\left(\begin{matrix} u_1 \\ u_2\end{matrix} \right) = 0\mbox{ in } V\cap \{x_{n+1}>0\},\qquad \partial^j_{n+1} u_k=0 \mbox{ on } V\cap \{x_{n+1}=0\},
\end{equation}
for $1\leq j\leq m-1$ and $1\leq k\leq 2$, with the following assumptions. The operators $L_1$ and $L_0$ are (vector-valued) $Q_1\times1$ operators of order $m$,  where $L_1$ satisfies the hypotheses of Theorem \ref{thm:calderonsystem:app} with $Q=Q_1$. The operators $L_2$ and $L_3$ are $Q_2\times1$ operators of order $m$ and at most $m-1$, respectively. Moreover, $L_2$ satisfies the hypotheses of Theorem \ref{thm:calderonelliptic:app} with $Q=Q_2$.
Then $(u_1,u_2)=0$ in a full neighborhood of $0$.

The same result extends systems of the form
\begin{equation}
\label{eq:RbyRsyst:app}
\left(L_{ij} \right)_{1\leq i,j\leq R} u = 0\mbox{ in } V\cap \{x_{n+1}>0\},\qquad \partial^j_{n+1} u_k=0 \mbox{ on } V\cap \{x_{n+1}=0\},
\end{equation}
for $1\leq k\leq R$, where $L_{ij}$ is (a vector-valued operator) of order $m-1$ for $i>j$, $L_{ii}$  is (a vector-valued operator) of order $m$ that satisfies the hypotheses of Theorem \ref{thm:calderonsystem:app} with an appropriate value of $Q$ when all $L_{ik}$ are of order $m-1$ for $k\not=i$ and the hypotheses of  Theorem \ref{thm:calderonelliptic:app} with an appropriate value of $Q$ when at least one operator $L_{ik}$ for $k<i$ is of order $m$.
\end{theorem}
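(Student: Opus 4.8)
The plan is to reduce the whole statement to a single combined Carleman estimate and then invoke the passage from Carleman estimate to uniqueness exactly as in Theorem~\ref{thm:calderonsystem:app} (following \cite[p.31]{N-AMS-73}). As in the setup there, since $N$ is non characteristic for the diagonal operators $L_{ii}$ and the Cauchy data of every $u_k$ vanish, each $u_k$ extends by zero across $\{t<0\}$; after a cutoff we may assume every $u_k$ is of class $C^m$ with small compact support in $|x|\le r$, $0\le t\le T$, the commutator terms produced by the cutoff being of lower order and absorbed as in \cite{N-AMS-73}. It then suffices to produce a constant $C$, independent of the tuple $(u_1,\dots,u_R)$, such that for $k^{-1}+T^2$ small the components of $\sum_j L_{ij}u_j$ control all the $|||D^\alpha u_r|||$ up to a factor $C(k^{-1}+T^2)$; a vanishing right-hand side then forces every $u_r\equiv 0$.

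First I would record, for each diagonal block, the Carleman estimate supplied by the hypotheses. Assign to each index the control order $\mu_i:=m$ when $L_{ii}$ is treated by the elliptic Theorem~\ref{thm:calderonelliptic:app} and $\mu_i:=m-1$ when it is treated by Theorem~\ref{thm:calderonsystem:app}. Applied to the scalar $u_i$ with its small support these give
\begin{equation*}
\sum_{|\alpha|\le \mu_i}|||D^\alpha u_i|||^2 \le C(k^{-1}+T^2)\sum_q |||(L_{ii})_q u_i|||^2 .
\end{equation*}
Next I would use the $i$-th equation $(L_{ii})_q u_i=-\sum_{j\ne i}(L_{ij})_q u_j$ to rewrite the right-hand side: if $o_{ij}$ denotes the order of $L_{ij}$, each term is bounded by $C\sum_{|\alpha|\le o_{ij}}|||D^\alpha u_j|||^2$ (using boundedness of the smooth coefficients and finiteness of the $q$-range), whence
\begin{equation*}
\sum_{|\alpha|\le \mu_i}|||D^\alpha u_i|||^2 \le C(k^{-1}+T^2)\sum_{j\ne i}\sum_{|\alpha|\le o_{ij}}|||D^\alpha u_j|||^2 .
\end{equation*}

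The structural observation that makes everything close upon summation over $i$ is the order matching $o_{ij}\le \mu_j$ for every $i\ne j$. Below the diagonal ($i>j$) one has $o_{ij}\le m-1\le\mu_j$, so those couplings are always harmless; above the diagonal ($i<j$) one may have $o_{ij}=m$, but the triangular hypotheses are arranged so that $L_{jj}$ is treated as elliptic (hence $\mu_j=m$) precisely when $u_j$ occurs at top order in some equation, so again $o_{ij}\le\mu_j$. In the $2\times2$ case this is exactly the pairing $o_{12}=\mathrm{ord}\,L_0=m\le\mu_2=m$ (forced by $L_2$ elliptic) together with $o_{21}=\mathrm{ord}\,L_3\le m-1\le\mu_1=m-1$. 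Summing the displayed inequalities over $i$ and reindexing then yields
\begin{equation*}
\sum_i\sum_{|\alpha|\le \mu_i}|||D^\alpha u_i|||^2 \le C'(k^{-1}+T^2)\sum_j\sum_{|\alpha|\le \mu_j}|||D^\alpha u_j|||^2 ,
\end{equation*}
and choosing $k^{-1}+T^2$ so small that $C'(k^{-1}+T^2)<1$ forces every $|||D^\alpha u_i|||=0$, hence $u_i\equiv0$ in a full neighborhood of $0$.

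The main obstacle here is bookkeeping rather than analysis: all the genuine work lives in Theorems~\ref{thm:calderonsystem:app} and \ref{thm:calderonelliptic:app}, and the delicate point is the order matching $o_{ij}\le\mu_j$ — specifically that the stronger elliptic estimate, which alone controls the top-order derivatives $|\alpha|=m$, is available for exactly those components $u_j$ that appear at order $m$ in the above-diagonal coupling terms, while the below-diagonal couplings are kept at order $\le m-1$. One must also check that the cutoff commutators and the lower-order contributions are genuinely of order $\le m-1$ in each $u_j$, so that they too are absorbed by the same $o_{ij}\le\mu_j$ accounting; this is routine under the smoothness assumed on the coefficients.
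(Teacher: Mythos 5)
Your proof follows essentially the same route as the paper's: sum the component Carleman estimates of Theorems \ref{thm:calderonsystem:app} and \ref{thm:calderonelliptic:app}, trade $L_{ii}u_i$ for the off-diagonal couplings via the equations, and absorb those couplings for $k^{-1}+T^2$ small. Your order-matching condition $o_{ij}\le\mu_j$ is exactly the paper's mechanism --- the order-$m$ coupling $L_0u_2$ is paid for by the elliptic estimate \eqref{eq:carlemanelliptic:app} for $L_2$, while the order-$(m-1)$ coupling $L_3u_1$ is absorbed by the standard estimate for $L_1$ --- and it packages cleanly the $R\times R$ generalization that the paper dismisses as notational. Two points in your write-up do need repair, both in the passage from the Carleman estimate to uniqueness rather than in the estimate itself. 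First, your displayed inequalities substitute the homogeneous equation $(L_{ii})_q u_i=-\sum_{j\ne i}(L_{ij})_q u_j$ and therefore only apply to exact solutions with compact support in $0\le t\le T$; but the function to which the estimate must be applied is $\zeta v$ for a cutoff $\zeta$ vanishing near $t=T$, which is \emph{not} a solution, so you must carry the terms $|||(Lu)_i|||^2$ on the right-hand side throughout, as in \eqref{eq:carleman2by2:app}. Second, the cutoff commutators are not ``lower order and absorbed'': $L(\zeta v)$ is genuinely of order $m-1$ in $v$ with coefficients of size $T^{-1}$, and uniqueness is extracted not by making $C'(k^{-1}+T^2)<1$ force $|||D^\alpha u_i|||=0$, but by noting that $L(\zeta v)$ is supported in $\{2T/3\le t\le T\}$, where the weight $w=e^{k(T-t)^2}$ is $O(e^{kT^2/9})$, against $w\ge e^{kT^2/4}$ on $\{t\le T/2\}$, and then letting $k\to\infty$. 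Both fixes are standard and are carried out verbatim in the paper's proof (reproducing \cite[p.31]{N-AMS-73}), so the discrepancy is one of exposition rather than substance.
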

\begin{proof}
First of all, since $N$ is non characteristic for $L_1$ and $L_2$, we observe that all derivatives of $u_k$ vanish on $\{x_{n+1}\}$ so that both functions can be extended by $0$ for $\{x_{n+1}<0\}$. The above system of PDEs is thus satisfied on $V$ with $u_k=0$ for $x_{n+1}<0$ and we wish to show that $u_k=0$ for $x_{n+1}\geq0$.

The proof is based on a modification of the method showing Theorem \ref{thm:calderonsystem:app} from \eqref{eq:carleman:app}. Indeed, we claim that for $(u_1,u_2)$ with support contained in $|x|\leq r$ and $0\leq t\leq T$, then the theorem follows from
\begin{equation}
\label{eq:carleman2by2:app}
\dsum_{|\alpha|<m}\dsum_{k=1}^2 |||D^\alpha u_k|||^2 \leq C(k^{-1}+T^2) |||Lu|||^2,
\end{equation}
where we denote by $Lu$ the left-hand side in \eqref{eq:2by2syst:app}. The proof is a straightforward modification of \cite[p.31]{N-AMS-73}, which we reproduce here. Let $\zeta$ be a smooth function defined for $t\geq0$ equal to $1$ for $t\leq 2T/3$ and $0$ for $t\geq T$. Let $u_k=\zeta v_k$ for $v=(v_1,v_2)$ solution of $Lv=0$. Then from \eqref{eq:carleman2by2:app}, using only the estimate with $|\alpha|=0$,  we obtain that
\begin{displaymath}
 \dint_0^{\frac{2T}3} \|v\|^2 w dt \leq C(k^{-1}+T^2) \dint_{\frac{2T}3}^{T} \|L(\zeta v)\|^2 w dt \leq C' (k^{-1}+T^2)  \dint_{\frac{2T}3}^{T} w dt,
\end{displaymath}
with $C'$ independent of $k$. This shows that 
\begin{displaymath}
 e^{kT^2/4} \dint_{0}^{\frac T2} \|v\|^2 dt \leq C' (k^{-1}+T^2) T e^{k T^2/9},
\end{displaymath} 
and as $k\to\infty$ that $\int_0^{\frac T2} \|v\|^2 dt\to0$.

It remains to show \eqref{eq:carleman2by2:app}. Using Theorem \ref{thm:calderonsystem:app}, we obtain that the left-hand-side is bounded by
\begin{displaymath}
   C(k^{-1}+T^2)\big( |||L_1u_1|||^2 +|||L_2u_2|||^2 \big)\leq
   C(k^{-1}+T^2) \big( ||| Lu |||^2 + ||| L_0 u_2 |||^2 + ||| L_3 u_1|||^2 \big).
\end{displaymath}
Now from Theorem \ref{thm:calderonelliptic:app}, we obtain that
\begin{displaymath}
   ||| L_0 u_2 |||^2 \leq C (k^{-1}+T^2) |||L_2 u_2|||^2.
\end{displaymath}
Moreover, we obtain that 
\begin{displaymath}
   ||| L_3 u_1|||^2  \leq C \dsum_{|\alpha|<m} |||D^\alpha u_1|||^2.
\end{displaymath}
Therefore, for $(k^{-1}+T^2)$ sufficiently small, we deduce \eqref{eq:carleman2by2:app}.
The generalization to \eqref{eq:RbyRsyst:app} is mostly notational and is left to the reader.
\end{proof}

%
%
%

\end{document}